\definecolor{ColBlack}{RGB}{0,0,0} 
\definecolor{ColWhite}{RGB}{255,255,255} 
\definecolor{Col1}{RGB}{133,6,6} 
\definecolor{Col2}{RGB}{198,8,0} 
\definecolor{Col3}{RGB}{174,74,52} 
\definecolor{Col4}{RGB}{103,113,121} 
\definecolor{Col5}{RGB}{90,94,107} 
\definecolor{Col6}{RGB}{70,63,50} 
\numberwithin{equation}{subsection}
\def\l@section{\@tocline{1}{3pt}{1pc}{5pc}{}}
\def\l@subsection{\@tocline{2}{2pt}{2pc}{5pc}{}}
\newtheorem{Theorem}{Theorem}[subsection]
\newtheorem{Proposition}[Theorem]{Proposition}
\newtheorem{Lemma}[Theorem]{Lemma}
\renewcommand{\leq}{\leqslant}
\renewcommand{\geq}{\geqslant}
\title[Quotients of the magmatic operad]
      {Quotients of the magmatic operad:\\
        lattice structures and convergent rewrite systems}
\keywords{Operad; Tree; Lattice; Rewrite rule; Presentation; Enumeration.}
\subjclass[2010]{18D50, 68Q42, 05C05.}
\date{\today}
\author{Cyrille Chenavier \and Christophe Cordero \and Samuele Giraudo}
\address{\scriptsize Université Paris-Est, LIGM (UMR $8049$), CNRS,
    ENPC, ESIEE Paris, UPEM, F-$77454$, Marne-la-Vallée, France}
\email{cyrille.chenavier@u-pem.fr}
\email{christophe.cordero@u-pem.fr}
\email{samuele.giraudo@u-pem.fr}
\newcommand{\Hide}[1]{\textcolor{Col4}{\tt [hidden]}}
\newcommand{\Def}[1]{\textcolor{Col3}{\em #1}}
\tikzstyle{Centering}=[{baseline={([yshift=-0.5ex]current
\newcommand{\N}{\mathbb{N}}
\newcommand{\K}{\mathbb{K}}
\newcommand{\Angle}[1]{\left\langle#1\right\rangle}
\newcommand{\Cca}{\mathcal{C}}
\newcommand{\Oca}{\mathcal{O}}
\newcommand{\Afr}{\mathfrak{a}}
\newcommand{\Rfr}{\mathfrak{r}}
\newcommand{\Sfr}{\mathfrak{s}}
\newcommand{\Tfr}{\mathfrak{t}}
\newcommand{\Zfr}{\mathfrak{z}}
\newcommand{\BranchingTrees}{\mathfrak{P}}
\newcommand{\LambdaB}{\bm{\lambda}}
\newcommand{\MuB}{\bm{\mu}}
\newcommand{\Zero}{\mathtt{0}}
\newcommand{\Two}{\mathtt{2}}
\newcommand{\HilbertSeries}{\mathcal{H}}
\newcommand{\GeneratingSet}{\mathfrak{G}}
\newcommand{\Mag}{\mathbf{Mag}}
\newcommand{\TwoMag}{2\Mag}
\newcommand{\KMag}{\K \Angle{\Mag}}
\newcommand{\KAs}{\K \Angle{\As}}
\newcommand{\QMag}{\mathcal{Q}\left(\KMag\right)}
\newcommand{\IMag}{\mathcal{I}\left(\KMag\right)}
\newcommand{\As}{\mathbf{As}}
\newcommand{\AAs}{\mathbf{AAs}}
\newcommand{\TwoNil}{2\mathbf{Nil}}
\newcommand{\RC}[1]{\mathbf{RC}^{\left(#1\right)}}
\newcommand{\KRC}[1]{\K\langle\RC{#1}\rangle}
\newcommand{\TwoAs}{2\As}
\newcommand{\CAsAll}{\mathbf{CAs}}
\newcommand{\CAs}[1]{\CAsAll^{\left(#1\right)}}
\newcommand{\KCAs}[1]{\K\langle\CAs{#1}\rangle}
\newcommand{\KCAsAll}{\K\langle\CAsAll\rangle}
\newcommand{\PrefixWord}{\mathrm{p}}
\newcommand{\Deg}{\mathrm{deg}}
\newcommand{\LComb}[1]{\mathfrak{c}^{\left(#1\right)}_{
\begin{tikzpicture}[xscale=.2,yscale=.17,Centering]
    \draw[Edge](0,0)--(-1,-1);
\end{tikzpicture}
}}
\newcommand{\RComb}[1]{\mathfrak{c}^{\left(#1\right)}_{
\begin{tikzpicture}[xscale=.2,yscale=.17,Centering]
    \draw[Edge](0,0)--(1,-1);
\end{tikzpicture}
}}
\newcommand{\LRank}{\mathrm{lr}}
\newcommand{\Catalan}{\mathrm{cat}}
\newcommand{\OrdQMag}{\preceq_{\mathrm{i}}}
\newcommand{\InfQMag}{\wedge_{\mathrm{i}}}
\newcommand{\SupQMag}{\vee_{\mathrm{i}}}
\newcommand{\LatQMag}{\left(\QMag,\OrdQMag,\InfQMag,\SupQMag\right)}
\newcommand{\OrdCAs}{\preceq_{\mathrm{d}}}
\newcommand{\InfCAs}{\wedge_{\mathrm{d}}}
\newcommand{\SupCAs}{\vee_{\mathrm{d}}}
\newcommand{\LatCAs}{\left(\CAsAll,\OrdCAs,\InfCAs,\SupCAs\right)}
\newcommand{\LatKCAs}{\left(\KCAsAll,\OrdCAs,\InfCAs,\SupCAs\right)}
\newcommand{\Unit}{\mathds{1}}
\newcommand{\Leaf}{{
\begin{tikzpicture}[xscale=.2,yscale=.22,Centering]
    \draw[Edge](0,0)--(0,-1);
\end{tikzpicture}
}}
\newcommand{\Corolla}{\mathrm{c}}
\newcommand{\NormalForms}{\mathfrak{N}}
\newcommand{\Hom}{\mathrm{Hom}}
\newcommand{\Compositions}{\mathcal{C}}
\DeclareMathOperator{\Product}{\star}
\DeclareMathOperator{\Congr}{\equiv}
\DeclareMathOperator{\Rew}{\to}
\DeclareMathOperator{\RewContext}{\Rightarrow}
\DeclareMathOperator{\RewContextT}{\overset{+}{\RewContext}}
\DeclareMathOperator{\RewContextRT}{\overset{\ast}{\RewContext}}
\DeclareMathOperator{\RewContextRST}{\overset{\ast}{\Leftrightarrow}}
\DeclareMathOperator{\lcm}{lcm}
\newcommand{\CongrCAs}[1]{\Congr^{\left(#1\right)}}
\newcommand{\CongrRC}[1]{\Congr_{\left(#1\right)}}
\newcommand{\INiHil}{I_{\TwoNil}}
\newcommand{\IAs}{I_{\KAs}}
\newcommand{\IAAs}{I_{\AAs}}
\newcommand{\IRC}[1]{I_{\RC{#1}}}
\newcommand{\Lightning}[1]{
\mathfrak{c}^{\left(#1\right)}_{\begin{tikzpicture}[scale=.1, Centering]
    \draw[Edge](0,0)--(-1,-2);
    \draw[Edge](-1,-2)--(0,-2);
    \draw[Edge](0,-2)--(-1,-4);
\end{tikzpicture}}}
\tikzstyle{Node}=[circle,draw=Col1!80,fill=Col1!8,inner sep=1pt,
\tikzstyle{Edge}=[draw=Col2!80,cap=round,thick]
\tikzstyle{EdgeColorF}=[Edge,draw=Col6!90,ultra thick]
\tikzstyle{Leaf}=[rectangle,draw=ColBlack!70,fill=ColBlack!16,
\tikzstyle{NodeST}=[font=\footnotesize]
\tikzstyle{Injection}=[ColBlack!100,draw,
\tikzstyle{Map}=[ColBlack!100,draw,-{>[scale=1.5,length=4,width=5]}]
\newcommand{\TreeA}{
\begin{tikzpicture}[xscale=.22,yscale=.23,Centering]
    \node(0)at(0.00,-5.25){};
    \node(2)at(2.00,-5.25){};
    \node(4)at(4.00,-3.50){};
    \node(6)at(6.00,-1.75){};
    \node[NodeST](1)at(1.00,-3.50){\begin{math}\Product\end{math}};
    \node[NodeST](3)at(3.00,-1.75){\begin{math}\Product\end{math}};
    \node[NodeST](5)at(5.00,0.00){\begin{math}\Product\end{math}};
    \draw[Edge](0)--(1);
    \draw[Edge](1)--(3);
    \draw[Edge](2)--(1);
    \draw[Edge](3)--(5);
    \draw[Edge](4)--(3);
    \draw[Edge](6)--(5);
    \node(r)at(5.00,1.31){};
    \draw[Edge](r)--(5);
\end{tikzpicture}}
\newcommand{\TreeB}{
\begin{tikzpicture}[xscale=.22,yscale=.23,Centering]
    \node(0)at(0.00,-3.50){};
    \node(2)at(2.00,-5.25){};
    \node(4)at(4.00,-5.25){};
    \node(6)at(6.00,-1.75){};
    \node[NodeST](1)at(1.00,-1.75){\begin{math}\Product\end{math}};
    \node[NodeST](3)at(3.00,-3.50){\begin{math}\Product\end{math}};
    \node[NodeST](5)at(5.00,0.00){\begin{math}\Product\end{math}};
    \draw[Edge](0)--(1);
    \draw[Edge](1)--(5);
    \draw[Edge](2)--(3);
    \draw[Edge](3)--(1);
    \draw[Edge](4)--(3);
    \draw[Edge](6)--(5);
    \node(r)at(5.00,1.31){};
    \draw[Edge](r)--(5);
\end{tikzpicture}}
\newcommand{\TreeC}{
\begin{tikzpicture}[xscale=.22,yscale=.18,Centering]
    \node(0)at(0.00,-4.67){};
    \node(2)at(2.00,-4.67){};
    \node(4)at(4.00,-4.67){};
    \node(6)at(6.00,-4.67){};
    \node[NodeST](1)at(1.00,-2.33){\begin{math}\Product\end{math}};
    \node[NodeST](3)at(3.00,0.00){\begin{math}\Product\end{math}};
    \node[NodeST](5)at(5.00,-2.33){\begin{math}\Product\end{math}};
    \draw[Edge](0)--(1);
    \draw[Edge](1)--(3);
    \draw[Edge](2)--(1);
    \draw[Edge](4)--(5);
    \draw[Edge](5)--(3);
    \draw[Edge](6)--(5);
    \node(r)at(3.00,1.75){};
    \draw[Edge](r)--(3);
\end{tikzpicture}}
\newcommand{\TreeD}{
\begin{tikzpicture}[xscale=.22,yscale=.23,Centering]
    \node(0)at(0.00,-1.75){};
    \node(2)at(2.00,-5.25){};
    \node(4)at(4.00,-5.25){};
    \node(6)at(6.00,-3.50){};
    \node[NodeST](1)at(1.00,0.00){\begin{math}\Product\end{math}};
    \node[NodeST](3)at(3.00,-3.50){\begin{math}\Product\end{math}};
    \node[NodeST](5)at(5.00,-1.75){\begin{math}\Product\end{math}};
    \draw[Edge](0)--(1);
    \draw[Edge](2)--(3);
    \draw[Edge](3)--(5);
    \draw[Edge](4)--(3);
    \draw[Edge](5)--(1);
    \draw[Edge](6)--(5);
    \node(r)at(1.00,1.31){};
    \draw[Edge](r)--(1);
\end{tikzpicture}}
\newcommand{\TreeE}{
\begin{tikzpicture}[xscale=.22,yscale=.23,Centering]
    \node(0)at(0.00,-1.75){};
    \node(2)at(2.00,-3.50){};
    \node(4)at(4.00,-5.25){};
    \node(6)at(6.00,-5.25){};
    \node[NodeST](1)at(1.00,0.00){\begin{math}\Product\end{math}};
    \node[NodeST](3)at(3.00,-1.75){\begin{math}\Product\end{math}};
    \node[NodeST](5)at(5.00,-3.50){\begin{math}\Product\end{math}};
    \draw[Edge](0)--(1);
    \draw[Edge](2)--(3);
    \draw[Edge](3)--(1);
    \draw[Edge](4)--(5);
    \draw[Edge](5)--(3);
    \draw[Edge](6)--(5);
    \node(r)at(1.00,1.31){};
    \draw[Edge](r)--(1);
\end{tikzpicture}}
\begin{document}

\begin{abstract}
    We study quotients of the magmatic operad, that is the free
    nonsymmetric operad over one binary generator. In the linear
    setting, we show that the set of these quotients admits a lattice
    structure and we show an analog of the Grassmann formula for the
    dimensions of these operads. In the nonlinear setting, we define
    comb associative operads, that are operads indexed by nonnegative
    integers generalizing the associative operad. We show that the set
    of comb associative operads admits a lattice structure, isomorphic
    to the lattice of nonnegative integers equipped with the division
    order. Driven by computer experimentations, we provide a finite
    convergent presentation for the comb associative operad in
    correspondence with~$3$. Finally, we study quotients of the magmatic
    operad by one cubic relation by expressing their Hilbert series and
    providing combinatorial realizations.
\end{abstract}

\maketitle

\begin{small}
\tableofcontents
\end{small}

\section*{Introduction}
Associative algebras are spaces endowed with a binary product $\Product$
satisfying among others the associativity law
\begin{equation}
    \left(x_1 \Product x_2\right) \Product x_3
    =
    x_1 \Product \left(x_2 \Product x_3\right).
\end{equation}
It is well-known that the associative algebras are representations of
the associative (nonsymmetric) operad $\As$. This operad can be seen as
the quotient of the magmatic operad $\Mag$ (the free operad of binary
trees on the binary generator~$\Product$) by the operad
congruence~$\Congr$ satisfying
\begin{equation} \label{equ:congruence_as}
    \begin{tikzpicture}[xscale=.24,yscale=.24,Centering]
        \node(0)at(0.00,-3.33){};
        \node(2)at(2.00,-3.33){};
        \node(4)at(4.00,-1.67){};
        \node[NodeST](1)at(1.00,-1.67)
            {\begin{math}\Product\end{math}};
        \node[NodeST](3)at(3.00,0.00)
            {\begin{math}\Product\end{math}};
        \draw[Edge](0)--(1);
        \draw[Edge](1)--(3);
        \draw[Edge](2)--(1);
        \draw[Edge](4)--(3);
        \node(r)at(3.00,1.5){};
        \draw[Edge](r)--(3);
    \end{tikzpicture}
    \Congr
    \begin{tikzpicture}[xscale=.24,yscale=.24,Centering]
        \node(0)at(0.00,-1.67){};
        \node(2)at(2.00,-3.33){};
        \node(4)at(4.00,-3.33){};
        \node[NodeST](1)at(1.00,0.00)
                {\begin{math}\Product\end{math}};
        \node[NodeST](3)at(3.00,-1.67)
                {\begin{math}\Product\end{math}};
        \draw[Edge](0)--(1);
        \draw[Edge](2)--(3);
        \draw[Edge](3)--(1);
        \draw[Edge](4)--(3);
        \node(r)at(1.00,1.5){};
        \draw[Edge](r)--(1);
    \end{tikzpicture}\,.
\end{equation}
These two binary trees are the syntax trees of the expressions appearing
in the above associativity law.
\medbreak

In a more combinatorial context and regardless of the theory of operads,
the Tamari order is a partial order on the set of the binary trees
having a fixed number of internal nodes $\gamma$. This order is
generated by the covering relation consisting in rewriting a tree $\Tfr$
into a tree $\Tfr'$ by replacing a subtree of $\Tfr$ of the form of the
left member of~\eqref{equ:congruence_as} into a tree of the form of the
right member of~\eqref{equ:congruence_as}. This transformation is known
in a computer science context as the right rotation
operation~\cite{Knu98} and intervenes in algorithms involving binary
search trees~\cite{AVL62}. The partial order hence generated by the
right rotation operation is known as the Tamari order~\cite{Tam62} and
has a lot of combinatorial and algebraic properties (see for
instance~\cite{HT72,Cha06}).
\medbreak

A first connection between the associative operad and the Tamari order
is based upon the fact that the orientation of~\eqref{equ:congruence_as}
from left to right provides a convergent orientation (a terminating and
confluent rewrite relation) of the congruence $\Congr$. The normal
forms of the rewrite relation induced by the rewrite rule obtained by
orienting~\eqref{equ:congruence_as} from left to right are right comb
binary trees and are hence in one-to-one correspondence with the
elements of~$\As$. Following the ideas developed by Anick for
associative algebras~\cite{Ani86}, the description of an operad by mean
of normal forms provides homological informations for this operad. One of
the fundamental homological properties for operads is
Koszulness~\cite{GK94}, generalizing Koszul associative
algebras~\cite{Pri70}: the convergent orientation
of~\eqref{equ:congruence_as} proves that $\As$ is a Koszul
operad~\cite{DK10,LV12}.
\medbreak

This work is intended to study and collect the possible links between
the Tamari order and some quotients of the operad $\Mag$. In the long
run, the goal is to study quotients $\Mag/_{\Congr}$ of $\Mag$ where
$\Congr$ is an operad congruence generated by equivalence classes of
trees of a fixed degree (that is, a fixed number of internal nodes). In
particular, we would like to know if the fact that $\Congr$ is generated
by equivalence classes of trees forming intervals of the Tamari order
implies algebraic properties for $\Mag/_{\Congr}$ (like the description
of orientations of its space of relations, of nice bases, and of Hilbert
series).
\medbreak

To explore this vast research area, we select to pursue in this paper
the following directions. First, we consider the very general set of the
quotients of $\Mag$ seen as an operad in the category of vector spaces.
We show that this set of operads forms a lattice, wherein its partial
order relation is defined from the existence of operad morphisms
(Theorem~\ref{thm:lattice_structure_of_QMag}). We also provide a
Grassmann formula (see for instance~\cite{Lan02} analog relating the
Hilbert series of the operads of the lattice together with their
lower-bound and upper-bound
(Theorem~\ref{thm:Grassmann_formula_for_Hilbert_series_of_QMag}).
Besides, we focus on a special kind of quotients of $\Mag$, denoted by
$\CAs{\gamma}$, defined by equating the left and right comb binary trees
of a fixed degree $\gamma \geq 1$. Observe that since $\CAs{2}=\As$, the
operads $\CAs{\gamma}$ can be seen as generalizations of $\As$. These
operads are called comb associative operads. For instance, $\CAs{3}$ is
the operad describing the category of the algebras equipped with a
binary product $\Product$ subjected to the relation
\begin{equation}
    \left(\left(x_1 \Product x_2\right) \Product x_3\right) \Product x_4
    =
    x_1 \Product \left(x_2 \Product \left(x_3 \Product x_4\right)\right).
\end{equation}
We first provide general results about the operads $\CAs{\gamma}$. In
particular, we show that the set of these operads forms a lattice which
embeds as a poset in the lattice of the quotients of $\Mag$
aforementioned (Theorems~\ref{thm:lattice_CAs}
and~\ref{thm:inclusion_lattice_CAs}). We focus in particular on the
study of $\CAs{3}$. Observe that the congruence defining this operad is
generated by an equivalence class of trees which is not an interval for
the Tamari order. As preliminary computer experiments show, $\CAs{3}$
has oscillating first dimensions (see~\eqref{equ:dimensions_CAs_3}),
what is rather unusual among all known operads. We provide a convergent
orientation of the space of relations of $\CAs{3}$
(Theorem~\ref{thm:convergent_rewrite_rule_CAs_3}), a description of a
basis of the operad in terms of normal forms, and prove that its Hilbert
series is rational. For all these, we use rewrite systems on
trees~\cite{BN98} and the Buchberger algorithm for operads~\cite{DK10}.
We expose some experimental results obtained with the help of the
computer for some operads $\CAs{\gamma}$ with $\gamma \geq 4$.
All our computer programs are made from scratch in {\sc Caml} and
{\sc Python}. Finally, we continue the investigation of the quotients
of $\Mag$ by regarding the quotients of $\Mag$ obtained by equating two
trees of degree $3$. This leads to ten quotient operads of $\Mag$. We
provide for some of these combinatorial realizations, mostly in terms
of integer compositions.
\medbreak

This text is presented as follows. Section~\ref{sec:operad_Mag} contains
preliminaries about operads, binary trees, the magmatic operad, and
rewrite systems on binary trees. We also prove and recall some important
lemmas about rewrite systems on trees used thereafter. In
Section~\ref{sec:Magmatic_operads}, we study the set of all the
quotients of $\Mag$ seen as an operad in the category of vector spaces
and its lattice structure. Section~\ref{sec:CAs_d} is the heart of this
article and is devoted to the study of the comb associative operads
$\CAs{\gamma}$. Finally, Section~\ref{sec:MAg_3} presents our results
about the quotients of $\Mag$ obtained by equating two trees of
degree~$3$.
\medbreak

Some of the results presented here were announced in~\cite{CCG18}.
\medbreak

\subsubsection*{Acknowledgements}
The authors wish to thank Maxime Lucas for helpful discussions and
Vladimir Dotsenko for his marks of interest and his bibliographic
suggestions.
\medbreak

\subsubsection*{General notations and conventions}
For any integers $a$ and $c$, $[a, c]$ denotes the set
$\{b \in \N : a \leq b \leq c\}$ and $[n]$, the set $[1, n]$. The
cardinality of a finite set $S$ is denoted by~$\# S$. In all this
work, $\K$ is a field of characteristic zero.
\medbreak

\section{The magmatic operad, quotients, and rewrite relations}
\label{sec:operad_Mag}
We set in this preliminary section our notations about operads. We also
provide a definition for the magmatic operad and introduce tools to
handle with some of its quotients involving rewrite systems on binary
trees.
\medbreak

\subsection{Nonsymmetric operads}
A \Def{nonsymmetric operad in the category of sets} (or a
\Def{nonsymmetric operad} for short) is a graded set
\begin{math}
    \Oca = \bigsqcup_{n \geq 1} \Oca(n)
\end{math}
together with maps
\begin{equation}
    \circ_i : \Oca(n) \times \Oca(m) \to \Oca(n + m - 1),
    \qquad 1 \leq i \leq n, 1 \leq m,
\end{equation}
called \Def{partial compositions}, and a distinguished element
$\Unit \in \Oca(1)$, the \Def{unit} of $\Oca$. This data has to satisfy,
for any $x \in \Oca(n)$, $y \in \Oca(m)$, and $z \in \Oca$, the three
relations
\begin{subequations}
\begin{equation} \label{equ:operad_axiom_1}
    (x \circ_i y) \circ_{i + j - 1} z = x \circ_i (y \circ_j z),
    \qquad
    1 \leq i \leq n, 1 \leq j \leq m,
\end{equation}
\begin{equation} \label{equ:operad_axiom_2}
    (x \circ_i y) \circ_{j + m - 1} z = (x \circ_j z) \circ_i y,
    \qquad
    1 \leq i < j \leq n,
\end{equation}
\begin{equation} \label{equ:operad_axiom_3}
    \Unit \circ_1 x = x = x \circ_i \Unit,
    \qquad 1 \leq i \leq n.
\end{equation}
\end{subequations}
Since we consider in this work only nonsymmetric operads, we shall call
these simply \Def{operads}.
\medbreak

Let us provide some elementary definitions and notations about operads.
If $x$ is an element of $\Oca$ such that $x \in \Oca(n)$ for an
$n \geq 1$, the \Def{arity} $|x|$ of $x$ is $n$. The
\Def{complete composition maps} of $\Oca$ are the map
\begin{equation}
    \circ : \Oca(n) \times
    \Oca\left(m_1\right) \times \dots \times \Oca\left(m_n\right)
    \to \Oca\left(m_1 + \dots + m_n\right)
\end{equation}
defined, for any $x \in \Oca(n)$ and $y_1, \dots, y_n \in \Oca$, by
\begin{equation} \label{equ:complete_composition}
    x \circ \left[y_1, \dots, y_n\right] :=
    \left(\cdots \left(\left(x \circ_n y_n\right)
    \circ_{n - 1} y_{n - 1}\right) \cdots\right) \circ_1 y_1.
\end{equation}
If $\Oca_1$ and $\Oca_2$ are two operads, a map
$\phi : \Oca_1 \to \Oca_2$ is an \Def{operad morphism} if it respects
the arities, sends the unit of $\Oca_1$ to the unit of $\Oca_2$, and
commutes with partial composition maps. A map $\phi : \Oca_1 \to \Oca_2$
is an \Def{operad antimorphism} if it respects the arities, sends the
unit of $\Oca_1$ to the unit of $\Oca_2$, and
\begin{math}
    \phi\left(x \circ_i y\right) = \phi(x) \circ_{|x|-i+1} \phi(y)
\end{math}
for all $x, y \in \Oca_1$ and $i \in [|x|]$. We say that $\Oca_2$ is a
\Def{suboperad} of $\Oca_1$ if $\Oca_2$ is a subset of $\Oca_1$
containing the unit of $\Oca_1$ and the partial composition maps of
$\Oca_2$ are the ones of $\Oca_1$ restricted on $\Oca_2$. For any subset
$\GeneratingSet$ of $\Oca$, the \Def{operad generated} by
$\GeneratingSet$ is the smallest suboperad $\Oca^\GeneratingSet$ of
$\Oca$ containing $\GeneratingSet$. When $\Oca = \Oca^\GeneratingSet$,
we say that $\GeneratingSet$ is a \Def{generating set} of $\Oca$. An
\Def{operad congruence} is an equivalence relation $\Congr$ on $\Oca$
such that $\Congr$ respects the arities, and for any
$x, x', y, y' \in \Oca$ such that $x \Congr x'$ and $y \Congr y'$,
$x \circ_i y$ is $\Congr$-equivalent to $x' \circ_i y'$ for any valid
integer $i$. Given an operad congruence $\Congr$, the
\Def{quotient operad} $\Oca/_{\Congr}$ of $\Oca$ by $\Congr$ is the
operad of all $\Congr$-equivalence classes endowed with the partial
composition maps defined in the obvious way. In the case where all the
sets $\Oca(n)$, $n \geq 1$, are finite, the \Def{Hilbert series}
$\HilbertSeries_\Oca(t)$ of $\Oca$ is the series defined by
\begin{equation} \label{equ:hilbert_series}
    \HilbertSeries_\Oca(t) := \sum_{n \geq 1} \# \Oca(n) \; t^n.
\end{equation}
\medbreak

We have provided here definitions about operads in the category of sets.
Nevertheless, operads can be defined in the category of $\K$-vector
spaces. We call
them \Def{linear operads} and we study a class of such operads in
Section~\ref{sec:Magmatic_operads}. All the above definitions extend for
linear operads, mainly by substituting Cartesian products $\times$ of
sets with tensor products $\otimes$ of spaces, maps with linear maps,
operad congruences with operad ideals, and cardinalities of sets with
space dimensions (for instance in~\eqref{equ:hilbert_series}).
If $\Oca$ is an operad in the category of sets, we denote by
$\K \Angle{\Oca}$ the corresponding linear operad defined on the
linear span of $\Oca$, where the partial composition maps of $\Oca$ are
extended by linearity on $\K \Angle{\Oca}$. Conversely, when $\Oca$ is
a linear operad admitting a basis $B$ such that its unit $\Unit$
belongs to $B$ and all partial composition maps are internal in $B$,
$\Oca = \K \Angle{B}$ and this operad can be studied as a
set-theoretic operad~$B$.
\medbreak

\subsection{Binary trees and the magmatic operad}
A \Def{binary tree} is either the \Def{leaf} $\Leaf$ or a pair
$(\Tfr_1, \Tfr_2)$ of binary trees. We use the standard terminology
about binary trees (such as \Def{root}, \Def{internal node},
\Def{left child}, \Def{right child}, {\em etc.}) in this work. Let us
recall the main notions. The \Def{arity} $|\Tfr|$ (resp. \Def{degree}
$\Deg(\Tfr)$) of a binary tree $\Tfr$ is its number of leaves (resp.
internal nodes). A binary tree $\Tfr$ is \Def{quadratic} (resp.
\Def{cubic}) if $\Deg(\Tfr) = 2$ (resp. $\Deg(\Tfr) = 3$). We shall draw
binary trees the root to the top. For instance,
\begin{equation}
    \begin{tikzpicture}[xscale=.15,yscale=.16,Centering]
        \node(0)at(0.00,-4.50){};
        \node(2)at(2.00,-6.75){};
        \node(4)at(4.00,-6.75){};
        \node(6)at(6.00,-4.50){};
        \node(8)at(8.00,-4.50){};
        \node[NodeST](1)at(1.00,-2.25){\begin{math}\Product\end{math}};
        \node[NodeST](3)at(3.00,-4.50){\begin{math}\Product\end{math}};
        \node[NodeST](5)at(5.00,0.00){\begin{math}\Product\end{math}};
        \node[NodeST](7)at(7.00,-2.25){\begin{math}\Product\end{math}};
        \draw[Edge](0)--(1);
        \draw[Edge](1)--(5);
        \draw[Edge](2)--(3);
        \draw[Edge](3)--(1);
        \draw[Edge](4)--(3);
        \draw[Edge](6)--(7);
        \draw[Edge](7)--(5);
        \draw[Edge](8)--(7);
        \node(r)at(5.00,2.25){};
        \draw[Edge](r)--(5);
    \end{tikzpicture}
\end{equation}
is the graphical representation of the binary tree
\begin{math}
    ((\Leaf, (\Leaf, \Leaf)), (\Leaf, \Leaf)).
\end{math}
\medbreak

The \Def{magmatic operad} $\Mag$ is the graded set of all the binary
trees where $\Mag(n)$, $n \geq 1$, is the set of all the binary trees of
arity $n$. The partial composition maps of $\Mag$ are grafting of trees:
given two binary trees $\Tfr$ and $\Sfr$, $\Tfr \circ_i \Sfr$ is the
binary tree obtained by grafting the root of $\Sfr$ onto the $i$th leaf
(numbered from left to right) of $\Tfr$. For instance,
\begin{equation} \label{equ:example_composition_mag}
    \begin{tikzpicture}[xscale=.15,yscale=.16,Centering]
        \node(0)at(0.00,-4.50){};
        \node(2)at(2.00,-6.75){};
        \node(4)at(4.00,-6.75){};
        \node(6)at(6.00,-4.50){};
        \node(8)at(8.00,-4.50){};
        \node[NodeST](1)at(1.00,-2.25){\begin{math}\Product\end{math}};
        \node[NodeST](3)at(3.00,-4.50){\begin{math}\Product\end{math}};
        \node[NodeST](5)at(5.00,0.00){\begin{math}\Product\end{math}};
        \node[NodeST](7)at(7.00,-2.25){\begin{math}\Product\end{math}};
        \draw[Edge](0)--(1);
        \draw[Edge](1)--(5);
        \draw[Edge](2)--(3);
        \draw[Edge](3)--(1);
        \draw[Edge](4)--(3);
        \draw[Edge](6)--(7);
        \draw[Edge](7)--(5);
        \draw[Edge](8)--(7);
        \node(r)at(5.00,2.25){};
        \draw[Edge](r)--(5);
    \end{tikzpicture}
    \enspace \circ_4 \enspace
    \begin{tikzpicture}[xscale=.17,yscale=.18,Centering]
        \node(0)at(0.00,-4.67){};
        \node(2)at(2.00,-4.67){};
        \node(4)at(4.00,-4.67){};
        \node(6)at(6.00,-4.67){};
        \node[NodeST](1)at(1.00,-2.33){\begin{math}\Product\end{math}};
        \node[NodeST](3)at(3.00,0.00){\begin{math}\Product\end{math}};
        \node[NodeST](5)at(5.00,-2.33){\begin{math}\Product\end{math}};
        \draw[Edge](0)--(1);
        \draw[Edge](1)--(3);
        \draw[Edge](2)--(1);
        \draw[Edge](4)--(5);
        \draw[Edge](5)--(3);
        \draw[Edge](6)--(5);
        \node(r)at(3.00,2){};
        \draw[Edge](r)--(3);
    \end{tikzpicture}
    \enspace = \enspace
    \begin{tikzpicture}[xscale=.15,yscale=.12,Centering]
        \node(0)at(0.00,-6.00){};
        \node(10)at(10.00,-12.00){};
        \node(12)at(12.00,-12.00){};
        \node(14)at(14.00,-6.00){};
        \node(2)at(2.00,-9.00){};
        \node(4)at(4.00,-9.00){};
        \node(6)at(6.00,-12.00){};
        \node(8)at(8.00,-12.00){};
        \node[NodeST](1)at(1.00,-3.00){\begin{math}\Product\end{math}};
        \node[NodeST](11)at(11.00,-9.00){\begin{math}\Product\end{math}};
        \node[NodeST](13)at(13.00,-3.00){\begin{math}\Product\end{math}};
        \node[NodeST](3)at(3.00,-6.00){\begin{math}\Product\end{math}};
        \node[NodeST](5)at(5.00,0.00){\begin{math}\Product\end{math}};
        \node[NodeST](7)at(7.00,-9.00){\begin{math}\Product\end{math}};
        \node[NodeST](9)at(9.00,-6.00){\begin{math}\Product\end{math}};
        \draw[Edge](0)--(1);
        \draw[Edge](1)--(5);
        \draw[Edge](10)--(11);
        \draw[Edge](11)--(9);
        \draw[Edge](12)--(11);
        \draw[Edge](13)--(5);
        \draw[Edge](14)--(13);
        \draw[Edge](2)--(3);
        \draw[Edge](3)--(1);
        \draw[Edge](4)--(3);
        \draw[Edge](6)--(7);
        \draw[Edge](7)--(9);
        \draw[Edge](8)--(7);
        \draw[Edge](9)--(13);
        \node(r)at(5.00,2.5){};
        \draw[Edge](r)--(5);
    \end{tikzpicture}
\end{equation}
is a partial composition in $\Mag$. This leads, by definition, to the
following complete composition maps of $\Mag$. Given $\Tfr \in \Mag(n)$
and $\Sfr_1, \dots, \Sfr_n \in \Mag$,
\begin{math}
    \Tfr \circ \left[\Sfr_1, \dots, \Sfr_n\right]
\end{math}
is the binary tree obtained by grafting simultaneously the roots of all
the $\Sfr_i$ onto the $i$th leaves of $\Tfr$. The unit of $\Mag$ is the
leaf. The number of binary trees of arity $n \geq 1$ is the $(n - 1)$st
Catalan number $\Catalan(n - 1)$ and hence, the Hilbert series of
$\Mag$ is
\begin{equation}
    \HilbertSeries_{\Mag}(t)
    = \sum_{n \geq 1} \Catalan(n - 1) t^n
    = \sum_{n \geq 1} \binom{2n - 1}{n - 1} \frac{1}{n} t^n.
\end{equation}
\medbreak

The operad $\Mag$ can be seen as the free operad generated by one binary
element~$\Product$. It satisfies the following universality property.
Let
\begin{math}
    \GeneratingSet := \GeneratingSet(2) := \left\{\Product'\right\}
\end{math}
be the graded set containing exactly one element $\Product'$ of arity
$2$. For any operad $\Oca$ and any map
\begin{math}
    f : \GeneratingSet(2) \to \Oca(2),
\end{math}
there exists a unique operad morphism $\phi : \Mag \to \Oca$ such that
$f = \phi \circ \Corolla$, where $\Corolla$ is the map sending
$\Product'$ to the unique binary tree of degree $1$ (and then, arity
$2$). In other terms, the diagram
\begin{equation}
    \begin{tikzpicture}[xscale=1.3,yscale=0.9,Centering]
        \node(G)at(0,0){\begin{math}\GeneratingSet\end{math}};
        \node(O)at(2,0){\begin{math}\Oca\end{math}};
        \node(AG)at(0,-2){\begin{math}\Mag\end{math}};
        \draw[Map](G)--(O)node[midway,above]{\begin{math}f\end{math}};
        \draw[Injection](G)--(AG)node[midway,left]
            {\begin{math}\Corolla\end{math}};
        \draw[Map,dashed](AG)--(O)node[midway,right]
            {\begin{math}\phi\end{math}};
    \end{tikzpicture}
\end{equation}
commutes.
\medbreak

We now provide some useful tools about binary trees. Given a binary tree
$\Tfr$, we denote by $ \PrefixWord(\Tfr)$ the \Def{prefix word} of
$\Tfr$, that is the word on $\{\Zero, \Two\}$ obtained by a left to
right depth-first traversal of $\Tfr$ and by writing $\Zero$ (resp.
$\Two$) when a leaf (resp. an internal node) is encountered. For
instance,
\begin{equation}
    \begin{tikzpicture}[xscale=.15,yscale=.12,Centering]
        \node(0)at(0.00,-6.00){};
        \node(10)at(10.00,-12.00){};
        \node(12)at(12.00,-12.00){};
        \node(14)at(14.00,-6.00){};
        \node(2)at(2.00,-9.00){};
        \node(4)at(4.00,-9.00){};
        \node(6)at(6.00,-12.00){};
        \node(8)at(8.00,-12.00){};
        \node[NodeST](1)at(1.00,-3.00){\begin{math}\Product\end{math}};
        \node[NodeST](11)at(11.00,-9.00){\begin{math}\Product\end{math}};
        \node[NodeST](13)at(13.00,-3.00){\begin{math}\Product\end{math}};
        \node[NodeST](3)at(3.00,-6.00){\begin{math}\Product\end{math}};
        \node[NodeST](5)at(5.00,0.00){\begin{math}\Product\end{math}};
        \node[NodeST](7)at(7.00,-9.00){\begin{math}\Product\end{math}};
        \node[NodeST](9)at(9.00,-6.00){\begin{math}\Product\end{math}};
        \draw[Edge](0)--(1);
        \draw[Edge](1)--(5);
        \draw[Edge](10)--(11);
        \draw[Edge](11)--(9);
        \draw[Edge](12)--(11);
        \draw[Edge](13)--(5);
        \draw[Edge](14)--(13);
        \draw[Edge](2)--(3);
        \draw[Edge](3)--(1);
        \draw[Edge](4)--(3);
        \draw[Edge](6)--(7);
        \draw[Edge](7)--(9);
        \draw[Edge](8)--(7);
        \draw[Edge](9)--(13);
        \node(r)at(5.00,2.5){};
        \draw[Edge](r)--(5);
    \end{tikzpicture}
    \enspace \xmapsto{\; \PrefixWord \;} \enspace
    \Two \Two \Zero \Two \Zero \Zero \Two \Two \Two \Zero \Zero \Two
    \Zero \Zero \Zero.
\end{equation}
The set of all the words on $\{\Zero, \Two\}$ is endowed with the
lexicographic order $\leq$ induced by $\Zero < \Two$. By extension, this
defines a total order on each set $\Mag(n)$, $n \geq 1$. Indeed, we set
$\Tfr \leq \Tfr'$ if $\Tfr$ and $\Tfr'$ have the same arity and
$\PrefixWord(\Tfr) \leq \PrefixWord(\Tfr')$. Let also the
\Def{left rank} of $\Tfr$ as the number $\LRank(\Tfr)$ of internal
nodes in the left branch beginning at the root of $\Tfr$. For instance,
\begin{equation}
    \begin{tikzpicture}[xscale=.15,yscale=.12,Centering]
        \node(0)at(0.00,-9.00){};
        \node(10)at(10.00,-9.00){};
        \node(12)at(12.00,-9.00){};
        \node(14)at(14.00,-9.00){};
        \node(2)at(2.00,-12.00){};
        \node(4)at(4.00,-12.00){};
        \node(6)at(6.00,-6.00){};
        \node(8)at(8.00,-9.00){};
        \node[NodeST](1)at(1.00,-6.00){\begin{math}\Product\end{math}};
        \node[NodeST](11)at(11.00,-3.00){\begin{math}\Product\end{math}};
        \node[NodeST](13)at(13.00,-6.00){\begin{math}\Product\end{math}};
        \node[NodeST](3)at(3.00,-9.00){\begin{math}\Product\end{math}};
        \node[NodeST](5)at(5.00,-3.00){\begin{math}\Product\end{math}};
        \node[NodeST](7)at(7.00,0.00){\begin{math}\Product\end{math}};
        \node[NodeST](9)at(9.00,-6.00){\begin{math}\Product\end{math}};
        \draw[Edge](0)--(1);
        \draw[Edge](1)--(5);
        \draw[Edge](10)--(9);
        \draw[Edge](11)--(7);
        \draw[Edge](12)--(13);
        \draw[Edge](13)--(11);
        \draw[Edge](14)--(13);
        \draw[Edge](2)--(3);
        \draw[Edge](3)--(1);
        \draw[Edge](4)--(3);
        \draw[Edge](5)--(7);
        \draw[Edge](6)--(5);
        \draw[Edge](8)--(9);
        \draw[Edge](9)--(11);
        \node(r)at(7.00,2.25){};
        \draw[Edge](r)--(7);
    \end{tikzpicture}
    \enspace \xmapsto{\; \LRank\; } \enspace 3.
\end{equation}
Equivalently, $\LRank(\Tfr)$ is the length of the prefix of
$\PrefixWord(\Tfr)$ containing only the letter $\Two$. A binary tree
$\Sfr$ is a \Def{subtree} of $\Tfr$ if it possible to stack $\Sfr$ onto
$\Tfr$ by possibly superimposing leaves of $\Sfr$ onto internal nodes of
$\Tfr$. More formally, by using the operad $\Mag$ and its composition
maps, this is equivalent to the fact that $\Tfr$ expresses as
\begin{equation}
    \Tfr = \Rfr \circ_i
    \left(\Sfr \circ \left[\Rfr_1, \dots, \Rfr_n\right]\right)
\end{equation}
where $\Rfr$ and $\Rfr_1$, \dots, $\Rfr_n$ are binary trees,
$i \in [|\Rfr|]$, and $n$ is the arity of $\Sfr$. When, on the contrary,
$\Sfr$ is not a subtree of $\Tfr$, we say that $\Tfr$ \Def{avoids}
$\Sfr$.
\medbreak

\subsection{Rewrite systems on binary trees}
We present here notions about rewrite systems on binary trees. General
notations and notions appear in~\cite{BN98}.
\medbreak

A \Def{rewrite rule} is an ordered pair $(\Sfr, \Sfr')$ of binary trees
such that $|\Sfr| = |\Sfr'|$. A set $S$ of rewrite rules is a binary
relation on $\Mag$ and it shall be denoted by $\Rew$. We denote by
$\Sfr \Rew \Sfr'$ the fact that $(\Sfr, \Sfr') \in \Rew$. In the sequel,
to define a set of rewrite rules $\Rew$, we shall simply list all the
pairs $\Sfr \Rew \Sfr'$ contained in $\Rew$. The \Def{degree}
$\Deg(\Rew)$ of $\Rew$ is the maximal degree of the binary trees in
relation through $\Rew$. Note that $\Deg(\Rew)$ can be not defined when
$\Rew$ is infinite.
\medbreak

If $\Rew$ is a set of rewrite rules, we denote by $\RewContext$ the
\Def{rewrite relation induced} by $\Rew$. Formally we have
\begin{equation} \label{equ:rewrite_relation_induced}
    \Tfr \circ_i
    \left(\Sfr \circ \left[\Rfr_1, \dots, \Rfr_{n}\right]\right)
    \RewContext
    \Tfr \circ_i
    \left(\Sfr' \circ \left[\Rfr_1, \dots, \Rfr_{n}\right]\right),
\end{equation}
if $\Sfr \Rew \Sfr'$ where $n = |\Sfr|$, and $\Tfr$, $\Rfr_1$, \dots,
$\Rfr_n$ are binary trees. In other words, one has
$\Tfr \RewContext \Tfr'$ if it is possible to obtain $\Tfr'$ from $\Tfr$
by replacing a subtree $\Sfr$ of $\Tfr$ by $\Sfr'$ whenever
$\Sfr \Rew \Sfr'$. For instance, if $\Rew$ is the set of rewrite rules
containing the single rewrite rule
\begin{equation} \label{equ:example_rewrite_rule}
    \begin{tikzpicture}[xscale=.22,yscale=.21,Centering]
        \node(0)at(0.00,-3.50){};
        \node(2)at(2.00,-5.25){};
        \node(4)at(4.00,-5.25){};
        \node(6)at(6.00,-1.75){};
        \node[NodeST](1)at(1.00,-1.75){\begin{math}\Product\end{math}};
        \node[NodeST](3)at(3.00,-3.50){\begin{math}\Product\end{math}};
        \node[NodeST](5)at(5.00,0.00){\begin{math}\Product\end{math}};
        \draw[Edge](0)--(1);
        \draw[EdgeColorF](1)--(5);
        \draw[Edge](2)--(3);
        \draw[EdgeColorF](3)--(1);
        \draw[Edge](4)--(3);
        \draw[Edge](6)--(5);
        \node(r)at(5.00,1.31){};
        \draw[Edge](r)--(5);
    \end{tikzpicture}
    \enspace \Rew \enspace
    \begin{tikzpicture}[xscale=.22,yscale=.21,Centering]
        \node(0)at(0.00,-1.75){};
        \node(2)at(2.00,-3.50){};
        \node(4)at(4.00,-5.25){};
        \node(6)at(6.00,-5.25){};
        \node[NodeST](1)at(1.00,0.00){\begin{math}\Product\end{math}};
        \node[NodeST](3)at(3.00,-1.75){\begin{math}\Product\end{math}};
        \node[NodeST](5)at(5.00,-3.50){\begin{math}\Product\end{math}};
        \draw[Edge](0)--(1);
        \draw[Edge](2)--(3);
        \draw[EdgeColorF](3)--(1);
        \draw[Edge](4)--(5);
        \draw[EdgeColorF](5)--(3);
        \draw[Edge](6)--(5);
        \node(r)at(1.00,1.31){};
        \draw[Edge](r)--(1);
    \end{tikzpicture}\,,
\end{equation}
one has
\begin{equation} \label{equ:example_rewrite_step}
    \begin{tikzpicture}[xscale=.22,yscale=.12,Centering]
        \node(0)at(0.00,-9.00){};
        \node(10)at(10.00,-12.00){};
        \node(12)at(12.00,-18.00){};
        \node(14)at(14.00,-18.00){};
        \node(16)at(16.00,-15.00){};
        \node(18)at(18.00,-9.00){};
        \node(2)at(2.00,-9.00){};
        \node(20)at(20.00,-9.00){};
        \node(4)at(4.00,-6.00){};
        \node(6)at(6.00,-12.00){};
        \node(8)at(8.00,-12.00){};
        \node[NodeST](1)at(1.00,-6.00){\begin{math}\Product\end{math}};
        \node[NodeST](11)at(11.00,-9.00)
            {\begin{math}\Product\end{math}};
        \node[NodeST](13)at(13.00,-15.00)
            {\begin{math}\Product\end{math}};
        \node[NodeST](15)at(15.00,-12.00)
            {\begin{math}\Product\end{math}};
        \node[NodeST](17)at(17.00,-3.00)
            {\begin{math}\Product\end{math}};
        \node[NodeST](19)at(19.00,-6.00)
            {\begin{math}\Product\end{math}};
        \node[NodeST](3)at(3.00,-3.00){\begin{math}\Product\end{math}};
        \node[NodeST](5)at(5.00,0.00){\begin{math}\Product\end{math}};
        \node[NodeST](7)at(7.00,-9.00){\begin{math}\Product\end{math}};
        \node[NodeST](9)at(9.00,-6.00){\begin{math}\Product\end{math}};
        \draw[Edge](0)--(1);
        \draw[Edge](1)--(3);
        \draw[Edge](10)--(11);
        \draw[EdgeColorF](11)--(9);
        \draw[Edge](12)--(13);
        \draw[Edge](13)--(15);
        \draw[Edge](14)--(13);
        \draw[Edge](15)--(11);
        \draw[Edge](16)--(15);
        \draw[Edge](17)--(5);
        \draw[Edge](18)--(19);
        \draw[Edge](19)--(17);
        \draw[Edge](2)--(1);
        \draw[Edge](20)--(19);
        \draw[Edge](3)--(5);
        \draw[Edge](4)--(3);
        \draw[Edge](6)--(7);
        \draw[Edge](7)--(9);
        \draw[Edge](8)--(7);
        \draw[EdgeColorF](9)--(17);
        \node(r)at(5.00,2.25){};
        \draw[Edge](r)--(5);
    \end{tikzpicture}
    \enspace \RewContext \enspace
    \begin{tikzpicture}[xscale=.22,yscale=.12,Centering]
        \node(0)at(0.00,-9.00){};
        \node(10)at(10.00,-9.00){};
        \node(12)at(12.00,-18.00){};
        \node(14)at(14.00,-18.00){};
        \node(16)at(16.00,-15.00){};
        \node(18)at(18.00,-15.00){};
        \node(2)at(2.00,-9.00){};
        \node(20)at(20.00,-15.00){};
        \node(4)at(4.00,-6.00){};
        \node(6)at(6.00,-9.00){};
        \node(8)at(8.00,-9.00){};
        \node[NodeST](1)at(1.00,-6.00){\begin{math}\Product\end{math}};
        \node[NodeST](11)at(11.00,-6.00)
            {\begin{math}\Product\end{math}};
        \node[NodeST](13)at(13.00,-15.00)
            {\begin{math}\Product\end{math}};
        \node[NodeST](15)at(15.00,-12.00)
            {\begin{math}\Product\end{math}};
        \node[NodeST](17)at(17.00,-9.00)
            {\begin{math}\Product\end{math}};
        \node[NodeST](19)at(19.00,-12.00)
            {\begin{math}\Product\end{math}};
        \node[NodeST](3)at(3.00,-3.00){\begin{math}\Product\end{math}};
        \node[NodeST](5)at(5.00,0.00){\begin{math}\Product\end{math}};
        \node[NodeST](7)at(7.00,-6.00){\begin{math}\Product\end{math}};
        \node[NodeST](9)at(9.00,-3.00){\begin{math}\Product\end{math}};
        \draw[Edge](0)--(1);
        \draw[Edge](1)--(3);
        \draw[Edge](10)--(11);
        \draw[EdgeColorF](11)--(9);
        \draw[Edge](12)--(13);
        \draw[Edge](13)--(15);
        \draw[Edge](14)--(13);
        \draw[Edge](15)--(17);
        \draw[Edge](16)--(15);
        \draw[EdgeColorF](17)--(11);
        \draw[Edge](18)--(19);
        \draw[Edge](19)--(17);
        \draw[Edge](2)--(1);
        \draw[Edge](20)--(19);
        \draw[Edge](3)--(5);
        \draw[Edge](4)--(3);
        \draw[Edge](6)--(7);
        \draw[Edge](7)--(9);
        \draw[Edge](8)--(7);
        \draw[Edge](9)--(5);
        \node(r)at(5.00,2.25){};
        \draw[Edge](r)--(5);
    \end{tikzpicture}\,.
\end{equation}
The right member of~\eqref{equ:example_rewrite_step} is obtained by
replacing, in the tree of left member
of~\eqref{equ:example_rewrite_step}, a subtree equal to the left
member of~\eqref{equ:example_rewrite_rule} starting at the right child
of its root by the right member of~\eqref{equ:example_rewrite_rule}.
\medbreak

Let $\Rew$ be a set of rewrite rules and $\RewContext$ be the rewrite
relation induced by $\Rew$. Since~$\RewContext$ is in particular a
binary relation on $\Mag$, the classical notations about closures apply
here: we denote by $\RewContextT$ (resp. $\RewContextRT$,
$\RewContextRST$) the transitive (resp. reflexive and transitive,
and reflexive, symmetric, and transitive) closure of~$\RewContext$.
\medbreak

When $\Tfr_0$, $\Tfr_1$, \dots, $\Tfr_k$ are binary trees such that
\begin{equation}
    \Tfr_0 \RewContext \Tfr_1 \RewContext \cdots \RewContext \Tfr_k,
\end{equation}
we say that $\Tfr_0$ is \Def{rewritable} by $\RewContext$ into $\Tfr_k$
in $k$ \Def{steps}. When there is no infinite chain
\begin{equation} \label{equ:infinite_chain}
    \Tfr_0 \RewContext \Tfr_1 \RewContext \Tfr_2 \RewContext \cdots
\end{equation}
we say that $\RewContext$ is \Def{terminating}. To establish the
termination of a rewrite relation, we will use the following criterion.
\medbreak

\begin{Lemma}\label{lem:prefix_word_termination}
    Let $\Rew$ be a set of rewrite rules on $\Mag$. If for any
    $\Tfr, \Tfr' \in \Mag$ such that $\Tfr \Rew \Tfr'$ one has
    $\Tfr > \Tfr'$, then the rewrite relation induced by $\Rew$ is
    terminating.
\end{Lemma}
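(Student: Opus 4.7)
The plan is to show that each $\RewContext$-step strictly decreases the tree in the total order $>$ on $\Mag(n)$ induced by the prefix-word lexicographic order; since $\RewContext$ preserves arity and each $\Mag(n)$ is finite, this order is a well-order there, which rules out infinite chains and yields termination. I would reduce the task to two compatibility sub-claims. Given a step $\Tfr \circ_i (\Sfr \circ [\Rfr_1, \dots, \Rfr_n]) \RewContext \Tfr \circ_i (\Sfr' \circ [\Rfr_1, \dots, \Rfr_n])$ induced by an element $\Sfr \Rew \Sfr'$, and hence (by the hypothesis of the lemma) $\Sfr > \Sfr'$, the two sub-claims are: \textbf{(a)} if $\mathfrak{u} > \mathfrak{u}'$ have the same arity, then $\Tfr \circ_i \mathfrak{u} > \Tfr \circ_i \mathfrak{u}'$; and \textbf{(b)} if $\Sfr > \Sfr'$ have the same arity $n$, then $\Sfr \circ [\Rfr_1, \dots, \Rfr_n] > \Sfr' \circ [\Rfr_1, \dots, \Rfr_n]$. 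Combining (b) with (a) applied to $\mathfrak{u} := \Sfr \circ [\Rfr_1, \dots, \Rfr_n]$ and $\mathfrak{u}' := \Sfr' \circ [\Rfr_1, \dots, \Rfr_n]$ gives the desired strict decrease.

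Claim (a) would be established by noting that $\PrefixWord(\Tfr \circ_i \mathfrak{u})$ and $\PrefixWord(\Tfr \circ_i \mathfrak{u}')$ are built from $\PrefixWord(\Tfr)$ by substituting its $i$-th letter $\Zero$ with $\PrefixWord(\mathfrak{u})$ and $\PrefixWord(\mathfrak{u}')$ respectively; the two substitutions have equal length, the surrounding context is shared, and $\PrefixWord(\mathfrak{u}) > \PrefixWord(\mathfrak{u}')$, so the comparison of the compositions is controlled by the comparison of $\mathfrak{u}$ and $\mathfrak{u}'$. For claim (b), I would introduce the depth sequence $\delta(\Tfr) := (d_1, \dots, d_n)$ of $\Tfr \in \Mag(n)$, where $d_k$ is the depth of the $k$-th leaf of $\Tfr$ read left to right, and proceed in two steps. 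First, I would check that on each $\Mag(n)$ the lex order on the $\delta$'s coincides with the lex order on prefix words: if $\PrefixWord(\Tfr)$ and $\PrefixWord(\Tfr')$ first differ at position $p$, say with $\Tfr$ having $\Two$ and $\Tfr'$ having $\Zero$, then the common prefix of length $p - 1$ fixes the depth $H$ of the node at position $p$ and contains the same number $j - 1$ of $\Zero$s, so the first $j - 1$ leaves have equal depths in both trees, while the $j$-th leaf of $\Tfr'$ is the leaf at position $p$ (depth $H$) and the $j$-th leaf of $\Tfr$ is a strict descendant of the internal node at position $p$ (depth $> H$), producing $d_j(\Tfr) > d_j(\Tfr')$. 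Second, I would observe that composition acts block-wise on $\delta$: the depth sequence of $\Sfr \circ [\Rfr_1, \dots, \Rfr_n]$ is the concatenation of the shifted sequences $d_k(\Sfr) + \delta(\Rfr_k)$, $k = 1, \dots, n$, because each leaf of $\Rfr_k$ lies below the $k$-th leaf of $\Sfr$. Consequently, if $\delta(\Sfr) > \delta(\Sfr')$ first differ at index $k$, then the composed depth sequences agree on the first $|\Rfr_1| + \dots + |\Rfr_{k-1}|$ entries and strictly differ at the next one by the positive amount $d_k(\Sfr) - d_k(\Sfr')$, proving (b).

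The main obstacle is claim (b): a direct letterwise comparison of $\PrefixWord(\Sfr \circ [\Rfr_1, \dots, \Rfr_n])$ and $\PrefixWord(\Sfr' \circ [\Rfr_1, \dots, \Rfr_n])$ is awkward because the substituted word $\PrefixWord(\Rfr_j)$ sits at different positions in the two prefix words and, when $\Rfr_j$ starts with $\Two$, the expected $\Two$-versus-$\Zero$ discrepancy gets masked and the comparison has to be chased further into both trees. Re-reading the lex order through depth sequences bypasses this difficulty entirely, since the effect of composition on $\delta$ is a transparent block-wise additive shift that visibly preserves the lex order.
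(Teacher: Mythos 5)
Your proof is correct, and its skeleton is the same as the paper's: show that every $\RewContext$-step strictly decreases the tree for the total order $>$ induced by the lexicographic order on prefix words, then conclude by finiteness of each $\Mag(n)$. Where you genuinely diverge is in how you justify the monotonicity of that order under composition. The paper's proof only records the two substitution facts (that $\PrefixWord(\Tfr \circ_i \Sfr)$ and $\PrefixWord(\Sfr \circ [\Rfr_1, \dots, \Rfr_n])$ are obtained by substituting prefix words for $\Zero$'s) and then asserts that the hypothesis $\Sfr > \Sfr'$ propagates to the composed trees; it never addresses the point you isolate as the main obstacle, namely that in comparing $\Sfr \circ [\Rfr_1, \dots, \Rfr_n]$ with $\Sfr' \circ [\Rfr_1, \dots, \Rfr_n]$ the leading $\Two$/$\Zero$ discrepancy between $\PrefixWord(\Sfr)$ and $\PrefixWord(\Sfr')$ can be masked when the substituted $\PrefixWord(\Rfr_j)$ itself begins with $\Two$. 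Your reformulation of the order through leaf-depth sequences, on which complete composition acts by a block-wise additive shift, is a clean and correct way to close exactly the step the paper treats as obvious (your one-directional comparison argument suffices since both relations are total orders on the finite set $\Mag(n)$ and the depth sequence determines the tree). So your argument buys rigor at the cost of one auxiliary notion, while the paper buys brevity at the cost of leaving its central monotonicity claim unproved.
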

\begin{proof}
    Observe first that for any binary trees $\Tfr$ and $\Sfr$, the
    prefix word of $\Tfr \circ_i \Sfr$ is obtained by replacing the
    $i$th $\Zero$ of $\PrefixWord(\Tfr)$ by $\PrefixWord(\Sfr)$. For
    this reason, and due to the
    definition~\eqref{equ:complete_composition} of $\circ$, for any
    binary trees $\Sfr$ and $\Rfr_1, \dots, \Rfr_n$ where $n$ is the
    arity of $\Sfr$, the prefix word of
    $\Sfr \circ \left[\Rfr_1, \dots, \Rfr_n\right]$ is obtained by
    replacing from right to left each $\Zero$ of $\PrefixWord(\Sfr)$ by
    the prefix words of each $\Rfr_i$. This, together with the
    definition~\eqref{equ:rewrite_relation_induced} of the rewrite
    relation $\RewContext$ induced by $\Rew$ and the hypothesis of the
    statement of the lemma, implies that if $\Tfr$ and $\Tfr'$ are two
    binary trees such that $\Tfr \RewContext \Tfr'$,
    $\PrefixWord(\Tfr) > \PrefixWord\left(\Tfr'\right)$. This means
    that $\Tfr > \Tfr'$ and leads to the fact that any chain
    \begin{math}
        \Tfr_0 \RewContext \Tfr_1 \RewContext \Tfr_2 \RewContext \cdots
    \end{math}
    is finite since
    \begin{math}
        \Tfr_0 > \Tfr_1 > \Tfr_2 > \cdots
    \end{math}
    and there is a finite number of binary trees of a fixed arity.
    Therefore, $\RewContext$ is terminating.
\end{proof}
\medbreak

A \Def{normal form} for $\RewContext$ is a binary tree $\Tfr$ such
that for all binary trees $\Tfr'$, $\Tfr \RewContextRT \Tfr'$ implies
$\Tfr' = \Tfr$. In other words, a normal form for $\RewContext$ is a
tree which is not rewritable by $\RewContext$. A normal form for
$\RewContext$ of a binary tree $\Tfr$ is a normal form $\overline{\Tfr}$
for $\RewContext$ such that $\Tfr\RewContextRT\overline{\Tfr}$. When no
confusion is possible, we simply say normal form instead of normal form
for $\RewContext$. The set of all the normal forms is denoted by
$\NormalForms_{\RewContext}$. The trees of $\NormalForms_{\RewContext}$
admit the following description, useful for enumerative prospects.
\medbreak

\begin{Lemma} \label{lem:normal_forms_avoiding}
    Let $\Rew$ be a set of rewrite rules on $\Mag$ and $\RewContext$ be
    the rewrite relation induced by $\Rew$. Then,
    $\NormalForms_{\RewContext}$ is the set of all the binary trees that
    avoid all the trees appearing as left members of~$\Rew$.
\end{Lemma}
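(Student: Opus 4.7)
The plan is to prove the set equality by a direct double inclusion, with both directions being straightforward consequences of unwinding the definition of $\RewContext$ given in \eqref{equ:rewrite_relation_induced} and comparing it with the definition of ``subtree'' given earlier in terms of the operadic decomposition
\begin{math}
    \Tfr = \Rfr \circ_i \left(\Sfr \circ \left[\Rfr_1, \dots, \Rfr_n\right]\right).
\end{math}
The key observation is that these two definitions use the \emph{exact same} operadic expression, so containing a left-hand side as a subtree is literally the same data as being able to fire the corresponding rewrite rule.

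For the first inclusion, I would argue by contrapositive: suppose $\Tfr$ is not a normal form, so there exists a tree $\Tfr' \neq \Tfr$ with $\Tfr \RewContext \Tfr'$. By the very form of \eqref{equ:rewrite_relation_induced}, this means that there exist a rewrite rule $\Sfr \Rew \Sfr'$ in $\Rew$, a binary tree $\Rfr$, binary trees $\Rfr_1, \dots, \Rfr_n$ (with $n = |\Sfr|$), and an integer $i \in [|\Rfr|]$ such that $\Tfr = \Rfr \circ_i (\Sfr \circ [\Rfr_1, \dots, \Rfr_n])$. This is precisely the definition given earlier of $\Sfr$ being a subtree of $\Tfr$. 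Hence $\Tfr$ does not avoid the left member $\Sfr$ of $\Rew$.

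For the reverse inclusion, again by contrapositive: suppose $\Tfr$ does not avoid some left member $\Sfr$ of $\Rew$, and let $\Sfr'$ be such that $\Sfr \Rew \Sfr'$. By definition of subtree, we can write $\Tfr = \Rfr \circ_i (\Sfr \circ [\Rfr_1, \dots, \Rfr_n])$ for suitable $\Rfr$, $i$, and $\Rfr_1, \dots, \Rfr_n$. Applying \eqref{equ:rewrite_relation_induced} directly gives $\Tfr \RewContext \Rfr \circ_i (\Sfr' \circ [\Rfr_1, \dots, \Rfr_n])$. Since rewrite rules are taken with $\Sfr \ne \Sfr'$ (the only case of interest, as a rule $\Sfr \Rew \Sfr$ is trivial), and since $\circ$ is injective in each of its arguments for fixed shape, this produces a tree different from $\Tfr$, witnessing that $\Tfr$ is not a normal form.

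There is no substantive obstacle here; the content of the lemma is a book-keeping observation that the local pattern-matching condition built into \eqref{equ:rewrite_relation_induced} coincides verbatim with the operadic subtree-inclusion condition. The only minor care required is the remark about $\Sfr \neq \Sfr'$ so that a rewrite step actually produces a distinct tree, which conforms with the standard convention on rewrite rules.
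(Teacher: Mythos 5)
Your proposal is correct and follows essentially the same route as the paper's proof, namely unwinding the definition~\eqref{equ:rewrite_relation_induced} of the induced rewrite relation and observing that it matches verbatim the operadic definition of a subtree occurrence; phrasing the two inclusions by contrapositive is only a cosmetic difference. Your extra remark that a rewrite step must produce a tree distinct from $\Tfr$ (ruling out trivial rules $\Sfr \Rew \Sfr$) is a small point of care that the paper's proof passes over silently.
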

\begin{proof}
    Assume first that $\Tfr$ is a binary tree avoiding all the trees
    appearing as left members of~$\Rew$. Then, due to the
    definition~\eqref{equ:rewrite_relation_induced} of $\RewContext$,
    $\Tfr$ is not rewritable by $\RewContext$. Hence, $\Tfr$ is a normal
    form for $\RewContext$. Conversely, assume that
    $\Tfr \in \NormalForms_{\RewContext}$. In this case, by definition
    of a normal form, $\Tfr$ is not rewritable by $\RewContext$, so that
    $\Tfr$ does not admit any occurrence of a tree appearing as a left
    member of~$\Rew$.
\end{proof}
\medbreak

When for all binary trees $\Tfr$, $\Sfr_1$, and $\Sfr_2$ such that
$\Tfr \RewContextRT \Sfr_1$ and $\Tfr \RewContextRT \Sfr_2$, there
exists a binary tree $\Tfr'$ such that $\Sfr_1 \RewContextRT \Tfr'$ and
$\Sfr_2 \RewContextRT \Tfr'$, we say that $\RewContext$ is
\Def{confluent}. Besides, a tree $\Tfr$ is a \Def{branching tree} for
$\RewContext$ if there exists two different trees $\Sfr_1$ and $\Sfr_2$
satisfying $\Tfr \RewContext \Sfr_1$ and $\Tfr \RewContext \Sfr_2$. In
this case, the pair $\{\Sfr_1, \Sfr_2\}$ is a \Def{branching pair} for
$\Tfr$. Moreover, the branching pair $\{\Sfr_1, \Sfr_2\}$ is
\Def{joinable} if there exists a binary tree $\Tfr'$ such that
$\Sfr_1 \RewContextRT \Tfr'$ and $\Sfr_2 \RewContextRT \Tfr'$. The
diamond lemma~\cite{New42} is based upon the inspection of the branching
pairs of a terminating rewrite relation $\RewContext$ in order to prove
its confluence.
\medbreak

\begin{Lemma} \label{lem:diamond_lemma}
    Let $\Rew$ be a set of rewrite rules on $\Mag$ and $\RewContext$ be
    the rewrite relation induced by $\Rew$. Then, if $\RewContext$ is
    terminating and all its branching pairs are joinable, $\RewContext$
    is confluent.
\end{Lemma}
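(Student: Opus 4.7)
The plan is to adapt Newman's lemma~\cite{New42} to this setting via Noetherian induction on $\Tfr$ with respect to the strict order $\RewContextT$. This order is well founded since any infinite strictly descending chain would produce an infinite chain of the form~\eqref{equ:infinite_chain}, contradicting termination. So well-founded induction applies: we may assume that the confluence property holds for every $\Tfr''$ with $\Tfr \RewContextT \Tfr''$ and prove it for $\Tfr$.

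Given $\Tfr \RewContextRT \Sfr_1$ and $\Tfr \RewContextRT \Sfr_2$, we seek $\Tfr' \in \Mag$ with $\Sfr_1 \RewContextRT \Tfr'$ and $\Sfr_2 \RewContextRT \Tfr'$. The cases where either chain is empty are immediate by taking $\Tfr' := \Sfr_2$ or $\Tfr' := \Sfr_1$. Otherwise, we factor each chain through its first step, obtaining $\Tfr \RewContext \Rfr_1 \RewContextRT \Sfr_1$ and $\Tfr \RewContext \Rfr_2 \RewContextRT \Sfr_2$. When $\Rfr_1 = \Rfr_2$, the induction hypothesis applied at $\Rfr_1$, which is strictly smaller than $\Tfr$ for $\RewContextT$, joins $\Sfr_1$ and $\Sfr_2$ directly.

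When $\Rfr_1 \neq \Rfr_2$, the pair $\{\Rfr_1, \Rfr_2\}$ is a branching pair for $\Tfr$, hence joinable by hypothesis: there exists $\Zfr \in \Mag$ with $\Rfr_1 \RewContextRT \Zfr$ and $\Rfr_2 \RewContextRT \Zfr$. Applying the induction hypothesis at $\Rfr_1$ to its descendants $\Sfr_1$ and $\Zfr$ yields an $\Afr \in \Mag$ with $\Sfr_1 \RewContextRT \Afr$ and $\Zfr \RewContextRT \Afr$. Then $\Rfr_2 \RewContextRT \Afr$ via $\Zfr$, so applying the induction hypothesis at $\Rfr_2$ to its descendants $\Sfr_2$ and $\Afr$ produces the desired $\Tfr'$ with $\Sfr_2 \RewContextRT \Tfr'$ and $\Afr \RewContextRT \Tfr'$, so that also $\Sfr_1 \RewContextRT \Afr \RewContextRT \Tfr'$.

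No step is individually subtle; the only point requiring care is that the induction hypothesis must be invoked at $\Rfr_1$ and $\Rfr_2$, which are strict $\RewContextT$-descendants of $\Tfr$, rather than at $\Sfr_1$ or $\Sfr_2$, which have no a priori order relationship with $\Tfr$ other than $\Tfr \RewContextRT \Sfr_i$. This is exactly the design of Newman's proof, and termination is precisely what legitimises the well-founded induction used to lift local joinability into global confluence.
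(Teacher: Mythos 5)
Your proof is correct: it is the classical Noetherian-induction argument for Newman's lemma, invoking the induction hypothesis at the one-step successors $\Rfr_1$ and $\Rfr_2$ (which are strictly smaller for the well-founded order induced by termination) and using joinability of the branching pair $\{\Rfr_1,\Rfr_2\}$ to bridge the two chains. The paper itself gives no proof of this lemma and simply cites Newman~\cite{New42}, so your argument supplies exactly the standard proof being referenced.
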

\medbreak

When $\RewContext$ is terminating and confluent, $\RewContext$ is said
\Def{convergent}. We shall use the following result to prove that
a terminating rewrite relation is convergent.
\medbreak

\begin{Lemma} \label{lem:degree_confluence}
    Let $\Rew$ be a set of rewrite rules on $\Mag$ having a degree
    $\Deg(\Rew)$. Then, if the rewrite relation $\RewContext$ induced by
    $\Rew$ is terminating and all its branching pairs made of trees of
    degrees at most $2 \, \Deg(\Rew) - 1$ are joinable, $\RewContext$ is
    convergent.
\end{Lemma}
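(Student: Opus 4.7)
The plan is to apply the diamond lemma (Lemma~\ref{lem:diamond_lemma}): since termination is supplied by the hypothesis, it suffices to prove that every branching pair of $\RewContext$ is joinable. I will follow the operadic analogue of the classical critical-pair argument, namely, show that the joinability of an arbitrary branching pair reduces to that of a branching pair living inside a tree of degree at most $2\Deg(\Rew)-1$, for which joinability is given by hypothesis.

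Concretely, let $\Tfr$ be a branching tree with branching pair $\{\Sfr_1,\Sfr_2\}$ obtained by applying two rewrite rules of $\Rew$ at two subtree occurrences in $\Tfr$. I would split the analysis according to whether these two occurrences share an internal node of $\Tfr$. In the disjoint case (including the configurations where the two occurrences lie in parallel branches of $\Tfr$ or one of them lies entirely below a leaf of the other), the two rewrites act on non-interacting regions of $\Tfr$; each of them still applies to the result of the other, and a common successor of $\Sfr_1$ and $\Sfr_2$ is obtained by performing the two rewrites in either order.

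In the overlap case, where at least one internal node of $\Tfr$ belongs to both occurrences, the union of the two sets of internal nodes is connected and contains at most $2\Deg(\Rew)-1$ nodes. I would then let $\Tfr_0$ be the minimal subtree of $\Tfr$ spanning this union, so that $\Deg(\Tfr_0)\leq 2\Deg(\Rew)-1$ and so that both rule applications restrict to a branching pair $\{\Sfr_1^0,\Sfr_2^0\}$ of $\Tfr_0$. By hypothesis this pair has a common reduct $\Tfr_0'$. Re-embedding the rewrite sequences $\Sfr_1^0\RewContextRT\Tfr_0'$ and $\Sfr_2^0\RewContextRT\Tfr_0'$ into the ambient context of $\Tfr$, via the very definition~\eqref{equ:rewrite_relation_induced} of $\RewContext$, produces a common reduct of $\Sfr_1$ and $\Sfr_2$. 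Lemma~\ref{lem:diamond_lemma} then delivers confluence, hence convergence.

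The main (and really only non-routine) step is the bookkeeping behind the overlap case: verifying that when the two left-hand-side occurrences share at least one internal node of $\Tfr$ they truly fit inside a subtree of $\Tfr$ of degree at most $2\Deg(\Rew)-1$, and that any rewrite sequence performed on this subtree lifts to a rewrite sequence on $\Tfr$. Both points are consequences of the fact that the support of a subtree occurrence is a connected block of internal nodes, so that two overlapping such blocks of degrees $\leq \Deg(\Rew)$ merge into a connected block of at most $2\Deg(\Rew)-1$ nodes, and of the compositional compatibility built into the definition of $\RewContext$ on $\Mag$.
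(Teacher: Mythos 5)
Your argument is correct, but it takes a genuinely different route from the paper in the sense that the paper does not prove the lemma at all: its proof is a one-line deferral to \cite[Lemma 1.2.1]{Gir16}, of which this statement is the specialization to rewrite relations on $\Mag$. You instead supply the self-contained critical-pair argument that underlies that reference. Your two key points both check out. For the disjoint case, since an occurrence of a left member is a connected, upward-closed set of internal nodes below a topmost node, two node-disjoint occurrences are either in parallel positions or one lies entirely inside a tree grafted below a leaf of the other; in both configurations each redex survives the contraction of the other and the two results merge by performing the remaining rewrite, as you say. For the overlap case, the union of two such occurrences sharing a node is again a connected, upward-closed set (the lower root necessarily belongs to the higher occurrence), hence is itself the occurrence of a tree $\Tfr_0$ with $\Deg(\Tfr_0)\leq 2\,\Deg(\Rew)-1$; the induced branching pair of $\Tfr_0$ is joinable by hypothesis, and the compatibility of $\RewContext$ with insertion into contexts, built into~\eqref{equ:rewrite_relation_induced}, lifts the joining sequences back to $\Tfr$. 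Combined with Lemma~\ref{lem:diamond_lemma} this gives confluence, hence convergence. What your approach buys is a proof readable without the external reference; what the paper's citation buys is brevity and the greater generality of the quoted result. The only cosmetic remark is that the ``minimal subtree spanning the union'' is just the union itself viewed as an occurrence; no further minimization is needed, and taking anything larger (for example the full subtree hanging from the higher root) would break the degree bound.
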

\begin{proof}
    The statement of the lemma is the specialization on rewrite
    relations on $\Mag$ of a more general result about rewrite relations
    appearing in~\cite[Lemma 1.2.1.]{Gir16}.
\end{proof}
\medbreak

Let us now go back on operads. Let $\Congr$ be an operad congruence of
$\Mag$. If $\Tfr$ is a binary tree, we denote by $[\Tfr]_{\Congr}$ the
$\Congr$-equivalence class of $\Tfr$. By definition, $[\Tfr]_{\Congr}$
is an element of the quotient operad
\begin{equation} \label{equ:quotient_operad_mag}
    \Oca := \Mag/_{\Congr}.
\end{equation}
A set of rewrite rules $\Rew$ on $\Mag$ is an \Def{orientation} of
$\Congr$ if $\RewContextRST$ and $\Congr$ are equal as binary relations,
where $\RewContext$ is the rewrite relation induced by $\Rew$. Moreover,
$\Rew$ is a \Def{convergent} (resp. \Def{terminating}, \Def{confluent})
orientation of $\Congr$ if $\RewContext$ is convergent (resp.
terminating, confluent). In this text, we call \Def{presentation} of a
quotient operad $\Oca$ of the form~\eqref{equ:quotient_operad_mag} the
data of a generating set for the operad congruence $\Congr$. Observe
that any orientation of $\Congr$ is a presentation of $\Oca$, so that
the above nomenclature (\Def{convergent}, \Def{terminating}, and
\Def{confluent}) still holds for presentations. A presentation is said
to be \Def{finite} if it is a finite set.
\medbreak

When $\Rew$ is a convergent orientation of $\Congr$, the set
$\NormalForms_{\RewContext}$ of all normal forms for $\RewContext$ is
called a \Def{Poincaré-Birkhoff-Witt basis}~\cite{Hof10,DK10} (or a
\Def{PBW basis} for short) of the quotient operad $\Oca$. This forms a
one-to-one correspondence between the sets
$\NormalForms_{\RewContext}(n)$ and $\Oca(n)$, $n \geq 1$. In other
words, a PBW basis offers a way to assign with each $\Congr$-equivalence
class $[\Tfr]_{\Congr}$ a representative
\begin{math}
    \Tfr' \in [\Tfr]_{\Congr} \cap \NormalForms_{\RewContext}.
\end{math}
A \Def{combinatorial realization} of an operad $\Oca$ of the
form~\eqref{equ:quotient_operad_mag} is an operad $\Cca$ isomorphic to
$\Oca$ which admits an explicit description of its elements and an
explicit description of its partial composition maps. The knowledge of a
PBW basis $\Cca := \NormalForms_{\RewContext}$ of $\Oca$ provides a
combinatorial realization $\Cca$ of $\Oca$. Indeed, the partial
composition $\Tfr' \circ_i \Sfr'$ of two binary trees $\Tfr'$ and
$\Sfr'$ of $\Cca$ is the tree obtained by grafting the root of $\Sfr'$
onto the $i$th leaf of $\Tfr'$ and by rewriting by $\RewContext$ this
tree as much as possible in order to obtain a normal form. This process
is well-defined since, by hypothesis, $\RewContext$ is convergent.
\medbreak

When $\Rew$ is a terminating but not convergent orientation of $\Congr$,
we shall use a variant of the \Def{Buchberger semi-algorithm} for
operads~\cite[Section 3.7]{DK10} to compute a set of rewrite rules
$\Rew'$ such that, as binary relations $\Rew \subseteq \Rew'$, and
$\Rew'$ is a convergent orientation of $\Congr$. This semi-algorithm
takes as input a finite set of rewrite rules $\Rew$ and outputs the set
of rewrite rules $\Rew'$ satisfying the property stated above. Here is,
step by step, a description of its execution:
\begin{enumerate}[label={(\it\arabic*)}]
    \item Set $\Rew' := \Rew$ and let $\BranchingTrees$ be the set of
    branching trees for $\RewContext$.
    \smallbreak

    \item \label{item:loop_start_buchberger}
    If $\BranchingTrees$ is empty, the execution stops and the output
    is $\Rew'$.
    \smallbreak

    \item\label{item:choice_branching_tree} Otherwise, let $\Tfr$ be a
    branching tree for $\RewContext'$. Remove $\Tfr$ from
    $\BranchingTrees$.
    \smallbreak

    \item\label{item:choice_branching_pair} Let $\{\Sfr_1, \Sfr_2\}$ be
    a branching pair for $\Tfr$.
    \smallbreak

    \item\label{item:computed_normal_forms} Let $\bar{\Sfr_1}$ and
    $\bar{\Sfr_2}$ be normal forms of $\Sfr_1$ and $\Sfr_2$,
    respectively.
    \smallbreak

    \item \label{item:new_rewrite_rule_buchberger}
    If $\bar{\Sfr_1}$ is different from $\bar{\Sfr_2}$, add
    to $\Rew'$ the rewrite rule
    \begin{math}
        \max_\leq \left\{\bar{\Sfr_1}, \bar{\Sfr_2}\right\}
        \Rew'
        \min_\leq \left\{\bar{\Sfr_1}, \bar{\Sfr_2}\right\}.
    \end{math}
    \smallbreak

    \item Add to $\BranchingTrees$ all new branching trees of degrees at
    most $2 \Deg(\Rew') - 1$ created by the rewrite rule created in
    Step~\ref{item:new_rewrite_rule_buchberger}.
    \smallbreak

    \item Go to Step~\ref{item:loop_start_buchberger}.
\end{enumerate}
The set of rewrite rules $\Rew'$ outputted by this semi-algorithm is a
\Def{completion} of $\Rew$. By Lemma~\ref{lem:degree_confluence},
$\RewContext'$ is confluent. Notice that, for certain inputs $\Rew$,
this semi-algorithm never stops. Notice also that the computed
completion depends on the total order $\leq$ on the binary trees of a
same arity, the choices at Steps~\ref{item:choice_branching_tree}
and~\ref{item:choice_branching_pair} as well as the computed normal
forms at Step~\ref{item:computed_normal_forms}.
\medbreak

\section{Quotients of the linear magmatic operad}
\label{sec:Magmatic_operads}
In this section, we equip the set of quotients of the linear magmatic
operad with a lattice structure. We also show a Grassmann formula
analog for this lattice.
\medbreak

\subsection{Lattice structure}
The \Def{linear magmatic operad}, written $\KMag$, is the free linear
operad over one binary generator. By definition, for each arity $n$,
$\KMag(n)$ is the vector space with basis $\Mag(n)$ and the
compositions maps of $\KMag$ are the extensions by linearity the ones
of~$\Mag$.
\medbreak

We denote by $\IMag$ the set of operad ideals of $\KMag$ and we set
\begin{equation} \label{equ:definition_of_QMag}
    \QMag := \Big\{\KMag/_I :  I\in\IMag\Big\},
\end{equation}
as the set of all quotients of the linear magmatic operad. Given
$\KMag/_I \in \QMag$ and $x \in \KMag$, we denote by $[x]_I$ the
$I$-equivalence class of $x$. Observe that $\KMag/_I$ is generated as an
operad by $[\Product]_I$ (where, recall, $\Product$ is the binary
generator of $\Mag$ and thus also of $\KMag$). Moreover, given two
elements $\Oca_1$ and $\Oca_2$ of $\QMag$, we denote by
$\Hom\left(\Oca_1,\Oca_2\right)$ the set of linear operad morphisms from
$\Oca_1$ to~$\Oca_2$.
\medbreak

\begin{Proposition} \label{prop:endomorphisms_of_magmatic_operads}
    For any $\Oca_1, \Oca_2 \in \QMag$, the set
    $\Hom\left(\Oca_1,\Oca_2\right)$ admits a vector space structure.
    Moreover, its dimension is equal to $0$ or $1$ and every nonzero
    morphism is surjective.
\end{Proposition}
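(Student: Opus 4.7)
My plan is to parametrize operad morphisms by scalars, then show the parameter space is linear and gives at most a one-dimensional $\Hom$.

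First, I would exploit that $\Oca_1 = \KMag/I_1$ is generated as an operad by $[\Product]_{I_1}$, so that any operad morphism $\phi : \Oca_1 \to \Oca_2$ is determined by its value on this single binary generator. Since $\phi$ preserves arities, $\phi([\Product]_{I_1}) \in \Oca_2(2)$. Because $\Oca_2(2)$ is a quotient of the one-dimensional space $\KMag(2) = \K\,\Product$, it is spanned by $[\Product]_{I_2}$ and has dimension at most~$1$. Writing $\phi([\Product]_{I_1}) = \lambda_\phi \, [\Product]_{I_2}$ then yields an injective assignment $\phi \mapsto \lambda_\phi$ of $\Hom(\Oca_1, \Oca_2)$ into~$\K$.

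Next, I would characterize which scalars $\lambda \in \K$ actually arise from a morphism. By the universal property of $\KMag$, there exists a unique linear operad morphism $\tilde\phi_\lambda : \KMag \to \KMag/I_2$ sending $\Product$ to $\lambda\,[\Product]_{I_2}$; an easy induction on the number of internal nodes gives $\tilde\phi_\lambda(\Tfr) = \lambda^{\Deg(\Tfr)}\,[\Tfr]_{I_2}$ for every $\Tfr \in \Mag$. The morphism $\tilde\phi_\lambda$ descends to $\Oca_1$ precisely when $\tilde\phi_\lambda(I_1) = 0$. Since operad ideals are homogeneous with respect to arity (hence to degree), this amounts to requiring $\lambda^d\,[r]_{I_2} = 0$ for every homogeneous $r \in I_1$ of degree $d$. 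Therefore the set of valid~$\lambda$ is either all of~$\K$ (when $I_1 \subseteq I_2$, so the condition is automatic) or reduces to $\{0\}$ (when some relation of $\Oca_1$ is nontrivial in $\Oca_2$). The image of $\phi \mapsto \lambda_\phi$ is thus a linear subspace of~$\K$, endowing $\Hom(\Oca_1, \Oca_2)$ with a vector space structure of dimension $0$ or $1$.

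For the surjectivity claim, suppose $\phi$ is nonzero, so that $\lambda_\phi \neq 0$. Then $[\Product]_{I_2} = \lambda_\phi^{-1}\,\phi([\Product]_{I_1})$ lies in the image of $\phi$. But the image of an operad morphism is a suboperad of $\Oca_2$, and $\Oca_2$ is generated by $[\Product]_{I_2}$; hence $\mathrm{Im}(\phi) = \Oca_2$.

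The main conceptual point to handle carefully is the vector space structure: naive pointwise addition of two morphisms $\phi_{\lambda_1}, \phi_{\lambda_2}$ fails to commute with partial compositions (since trees of degree $d$ are scaled by $\lambda^d$, not $\lambda$, so $(\lambda_1+\lambda_2)^d \neq \lambda_1^d + \lambda_2^d$ in general). The structure must therefore be transported from the subspace of valid scalars in $\K$ along the bijection $\phi \mapsto \lambda_\phi$, which is what justifies the phrasing \emph{admits a vector space structure} in the statement.
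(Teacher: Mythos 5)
Your proof is correct and follows essentially the same route as the paper's: reduce a morphism $\phi$ to the scalar $\lambda$ with $\phi\left([\Product]_{I_1}\right) = \lambda [\Product]_{I_2}$, observe that the set of admissible scalars is either $\{0\}$ or all of $\K$, and deduce surjectivity from the generator of $\Oca_2$ lying in the image. Your explicit formula $\tilde\phi_\lambda(\Tfr) = \lambda^{\Deg(\Tfr)}[\Tfr]_{I_2}$ together with the homogeneity of ideals, and your remark that the linear structure on $\Hom\left(\Oca_1,\Oca_2\right)$ must be transported along $\phi \mapsto \lambda_\phi$ rather than taken pointwise, make precise two points the paper's proof leaves implicit.
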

\begin{proof}
    Let $I_1, I_2 \in \IMag$ such that $\Oca_1 = \KMag/_{I_1}$ and
    $\Oca_2 = \KMag/_{I_2}$. Since $\Oca_1$ is generated by the binary
    element $[\Product]_{I_1}$, a morphism $\varphi:\Oca_1\to\Oca_2$ is
    uniquely determined by $\varphi\left([\Product]_{I_1}\right)$.
    Moreover, $\varphi\left([\Product]_{I_1}\right)$ has arity $2$ in
    $\Oca_2$. Hence, $\varphi\left([\Product]_{I_1}\right)$ belongs to
    the line spanned by the binary generator of $\Oca_2$, that is there
    exists a scalar $\lambda\in\K$ such that
    \begin{math}
        \varphi\left([\Product]_{I_1}\right)
        = \lambda[\Product]_{I_2}.
    \end{math}
    If there exists such a $\lambda$ different from zero, then for every
    nonzero scalar $\mu$, we have a well-defined operad morphism
    $\psi:\Oca_1\to\Oca_2$ satisfying
    \begin{math}
        \psi([\Product]_{I_1}) =\mu[\Product]_{I_2}
        = \left(\mu \lambda^{-1}\right)\varphi([\Product]_{I_1}).
    \end{math}
    Hence, $\Hom\left(\Oca_1,\Oca_2\right)$ is either reduced to the
    zero morphism or it is in one-to-one correspondence with $\K$, which
    proves that $\Hom\left(\Oca_1,\Oca_2\right)$ is a vector space of
    dimension at most $1$. Moreover, if $\varphi$ is different from $0$,
    that is there is a nonzero scalar such that
    \begin{math}
        \varphi\left([\Product]_{I_1}\right) = \lambda[\Product]_{I_2},
    \end{math}
    we have
    \begin{math}
        \varphi\left(\lambda^{-1}[\Product]_{I_1}\right)
        = [\Product]_{I_2},
    \end{math}
    so that $\varphi$ is surjective.
\end{proof}
\medbreak

We introduce the binary relation $\OrdQMag$ on $\QMag$ as follows: we
have $\Oca_2\OrdQMag\Oca_1$ if the dimension of
$\Hom\left(\Oca_1,\Oca_2\right)$ is equal to~$1$.
\medbreak

\begin{Proposition} \label{prop:order_relations_on_QMag_and_ideals}
    Let $\Oca_1=\KMag/_{I_1}$ and $\Oca_2=\KMag/_{I_2}$ be two operads
    of $\QMag$. We have $\Oca_2\OrdQMag\Oca_1$ if and only if
    $I_1\subseteq I_2$.
\end{Proposition}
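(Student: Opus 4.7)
My plan is to prove both implications by leveraging the universal property of $\KMag$ as the free linear operad on one binary generator, together with Proposition~\ref{prop:endomorphisms_of_magmatic_operads}. Throughout, let $\pi_1:\KMag\to\Oca_1$ and $\pi_2:\KMag\to\Oca_2$ denote the canonical projections, so $\pi_i(\Product)=[\Product]_{I_i}$.

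For the implication $I_1\subseteq I_2 \Rightarrow \Oca_2\OrdQMag\Oca_1$, I would construct an explicit nonzero morphism. Namely, define $\varphi:\Oca_1\to\Oca_2$ by $\varphi\left([x]_{I_1}\right):=[x]_{I_2}$. Well-definedness follows from $I_1\subseteq I_2$: if $[x]_{I_1}=[y]_{I_1}$ then $x-y\in I_1\subseteq I_2$, hence $[x]_{I_2}=[y]_{I_2}$. The map $\varphi$ manifestly respects arities, the unit, and partial compositions, so it is an operad morphism. Since $\varphi\left([\Product]_{I_1}\right)=[\Product]_{I_2}\neq 0$, it is nonzero, and by Proposition~\ref{prop:endomorphisms_of_magmatic_operads} we conclude that $\Hom(\Oca_1,\Oca_2)$ has dimension exactly $1$, so $\Oca_2\OrdQMag\Oca_1$.

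For the converse $\Oca_2\OrdQMag\Oca_1 \Rightarrow I_1\subseteq I_2$, the key idea is that any nonzero morphism between these quotients must, after rescaling, be the canonical projection intertwining $\pi_1$ and $\pi_2$. Assume $\dim\Hom(\Oca_1,\Oca_2)=1$, and pick a nonzero $\varphi\in\Hom(\Oca_1,\Oca_2)$. By Proposition~\ref{prop:endomorphisms_of_magmatic_operads}, there exists $\lambda\in\K\setminus\{0\}$ with $\varphi([\Product]_{I_1})=\lambda[\Product]_{I_2}$; rescaling (using the argument in that proposition showing that $\mu[\Product]_{I_2}$ is attained for every $\mu\in\K$), I obtain a morphism $\psi:\Oca_1\to\Oca_2$ such that $\psi\left([\Product]_{I_1}\right)=[\Product]_{I_2}$. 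Then $\psi\circ\pi_1:\KMag\to\Oca_2$ is an operad morphism sending $\Product$ to $[\Product]_{I_2}$, and so does $\pi_2$. Since $\KMag$ is freely generated by $\Product$, the universal property forces $\psi\circ\pi_1=\pi_2$.

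The conclusion is then immediate: for any $x\in I_1$, we have $\pi_1(x)=0$, whence $\pi_2(x)=\psi(\pi_1(x))=0$, i.e.\ $x\in I_2$. Thus $I_1\subseteq I_2$. There is no real obstacle here; the only subtle point is making sure the rescaling step is justified, and this is precisely what Proposition~\ref{prop:endomorphisms_of_magmatic_operads} provides. The whole argument is essentially the standard factorization of morphisms through quotients, specialized to the free-operad setting where the generator uniquely determines a morphism.
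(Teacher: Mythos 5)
Your proof is correct and follows essentially the same route as the paper: both reduce, via Proposition~\ref{prop:endomorphisms_of_magmatic_operads}, to the well-definedness of the canonical morphism sending $[\Product]_{I_1}$ to $[\Product]_{I_2}$, and then identify this condition with $I_1\subseteq\ker\pi_2=I_2$ by the universal/factorization property. Your version merely unpacks the two implications separately (with the rescaling step made explicit), which the paper compresses into a single equivalence.
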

\begin{proof}
    Since, by Proposition~\ref{prop:endomorphisms_of_magmatic_operads},
    $\Hom\left(\Oca_1,\Oca_2\right)$ is a vector space of
    dimension at most~$1$, it contains a nonzero morphism if and only if
    the morphism $\bar{\varphi}:\Oca_1\to\Oca_2$ satisfying
    \begin{math}
        \bar{\varphi}\left([\Product]_{I_1}\right) = [\Product]_{I_2}
    \end{math}
    is well-defined, which means that $\Oca_2\OrdQMag\Oca_1$ is
    equivalent to this condition. Moreover, by the universal property
    of the quotient, $\bar{\varphi}$ is well-defined if and only if
    $I_1$ is included in the kernel of the morphism
    $\varphi:\KMag\to\Oca_2$ defined by
    \begin{math}
        \varphi(\Product) = [\Product]_{I_2}.
    \end{math}
    This kernel is equal to $I_2$, so that $\bar{\varphi}$ is
    well-defined if and only if $I_1$ is included in $I_2$, which
    concludes the proof.
\end{proof}
\medbreak

Recall that a \Def{lattice} is a tuple
$\left(E,\leq,\wedge,\vee\right)$ where $\leq$ is a partial order
relation such that any two elements $e$ and $e'$ of $E$ admit a
lower-bound $e\wedge e'$ and an upper-bound $e\vee e'$. In particular,
$\left(\IMag,\subseteq,\cap,+\right)$ is a lattice, where $\cap$ and $+$
are the intersection and the sum of operad ideals, respectively.
\medbreak

Given two operads $\Oca_1=\KMag/_{I_1}$ and $\Oca_2=\KMag/_{I_2}$ of
$\QMag$, let us define
\begin{equation} \label{equ:inf_QMag}
    \Oca_1\InfQMag \Oca_2 := \KMag/_{I_1+I_2}
\end{equation}
and
\begin{equation} \label{equ:sup_QMag}
    \Oca_1\SupQMag \Oca_2 := \KMag/_{I_1\cap I_2}.
\end{equation}
Explicitly, for every positive integer $n$,
$\left(\Oca_1\InfQMag\Oca_2\right)(n)$ (resp.
$\left(\Oca_1\SupQMag\Oca_2\right)(n)$) is the quotient vector space
$\KMag(n)/_{I_1(n)+I_2(n)}$ (resp.
$\KMag(n)/_{I_1(n)\cap I_2(n)}$).
\medbreak

\begin{Theorem} \label{thm:lattice_structure_of_QMag}
    The tuple $\LatQMag$ is a lattice.
\end{Theorem}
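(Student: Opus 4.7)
The plan is to bootstrap the lattice structure on $\QMag$ from the known lattice structure on $(\IMag,\subseteq,\cap,+)$ via the order-reversing correspondence established in Proposition~\ref{prop:order_relations_on_QMag_and_ideals}. Concretely, the assignment $I \mapsto \KMag/_I$ is a bijection between $\IMag$ and $\QMag$, and by that proposition we have $\Oca_2 \OrdQMag \Oca_1$ if and only if $I_1 \subseteq I_2$. The whole argument is then a routine transfer along an antitone bijection.

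First, I would check that $\OrdQMag$ is a partial order. Reflexivity follows from $I \subseteq I$. For antisymmetry, if $\Oca_1 \OrdQMag \Oca_2$ and $\Oca_2 \OrdQMag \Oca_1$ with $\Oca_j = \KMag/_{I_j}$, then Proposition~\ref{prop:order_relations_on_QMag_and_ideals} gives $I_1 \subseteq I_2$ and $I_2 \subseteq I_1$, hence $I_1=I_2$ and $\Oca_1 = \Oca_2$. Transitivity comes from the transitivity of $\subseteq$ on $\IMag$ in the same way.

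Next, I would verify that $\InfQMag$ and $\SupQMag$, as defined in \eqref{equ:inf_QMag} and \eqref{equ:sup_QMag}, realize the meet and join for $\OrdQMag$. Let $\Oca_j = \KMag/_{I_j}$ for $j \in \{1,2\}$. Since $I_1 \subseteq I_1 + I_2$ and $I_2 \subseteq I_1 + I_2$, Proposition~\ref{prop:order_relations_on_QMag_and_ideals} yields $\Oca_1 \InfQMag \Oca_2 \OrdQMag \Oca_1$ and $\Oca_1 \InfQMag \Oca_2 \OrdQMag \Oca_2$, so $\Oca_1 \InfQMag \Oca_2$ is a lower bound. If $\Oca = \KMag/_J$ satisfies $\Oca \OrdQMag \Oca_1$ and $\Oca \OrdQMag \Oca_2$, then $I_1 \subseteq J$ and $I_2 \subseteq J$, whence $I_1 + I_2 \subseteq J$, which by the same proposition means $\Oca \OrdQMag \Oca_1 \InfQMag \Oca_2$. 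This shows $\Oca_1 \InfQMag \Oca_2$ is the greatest lower bound. The dual argument, replacing $+$ by $\cap$ and reversing the inclusions, shows that $\Oca_1 \SupQMag \Oca_2 = \KMag/_{I_1 \cap I_2}$ is the least upper bound.

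There is no real obstacle here: the only nontrivial input, namely that morphisms are controlled by ideal containment, is already packaged in Proposition~\ref{prop:order_relations_on_QMag_and_ideals}. The one thing worth a small sanity check is that the construction is well-defined on equivalence classes, but since distinct ideals yield distinct quotients (and the map $I \mapsto \KMag/_I$ is a bijection onto $\QMag$ by definition \eqref{equ:definition_of_QMag}), this is automatic.
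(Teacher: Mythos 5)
Your proof is correct and follows essentially the same route as the paper: both transfer the lattice structure of $\left(\IMag,\subseteq,\cap,+\right)$ to $\QMag$ along the antitone bijection $I\mapsto\KMag/_I$, using Proposition~\ref{prop:order_relations_on_QMag_and_ideals} as the key input; you merely spell out the greatest-lower-bound and least-upper-bound verifications that the paper leaves implicit in the phrase ``the decreasing bijection induces lattice operations.''
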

\begin{proof}
    First, we observe that the map $\Oca:\IMag\to\QMag$ defined by
    $\Oca(I) := \KMag/_I$ is a bijection: it is surjective by definition
    of $\QMag$ and it is injective since
    \begin{math}
        \Oca\left(I_1\right) = \Oca\left(I_2\right)
    \end{math}
    implies that the kernel of the natural projection
    \begin{math}
        \KMag\to\Oca\left(I_1\right) = \Oca\left(I_2\right)
    \end{math}
    is equal to both $I_1$ and $I_2$. Moreover, from
    Proposition~\ref{prop:order_relations_on_QMag_and_ideals},
    $\Oca_2\OrdQMag\Oca_1$ is equivalent to $I_1\subseteq I_2$, so that
    $\OrdQMag$ is a partial order relation on $\QMag$ and $\Oca$ is a
    decreasing bijection. The tuple
    $\left(\IMag,\subseteq,\cap,+\right)$ being a lattice, the
    decreasing bijection $\Oca$ induces lattice operations on $\QMag$,
    precisely $\InfQMag$ and $\SupQMag$ by definition.
\end{proof}
\medbreak

The union of generating sets of two operad ideals $I_1$ and $I_2$
is a generating set of $I_1+I_2$, so that the union of generating
relations for the two operads $\Oca_1$ and $\Oca_2$ of $\QMag$ forms a
generating set for the relations of $\Oca_1\InfQMag\Oca_2$. However, the
authors do not know how to compute a generating set of the intersection
of ideals (it is not the intersection of the generating relations), so
that we do not know any general method to compute generating relations
for~$\Oca_1\SupQMag\Oca_2$.
\medbreak

\subsection{Hilbert series and Grassmann formula}
The statement of the Grassmann formula analog for $\LatQMag$ is the
following.
\medbreak

\begin{Theorem}
    \label{thm:Grassmann_formula_for_Hilbert_series_of_QMag}
    Let $\Oca_1$ and $\Oca_2$ be two operads of $\QMag$. We have
    \begin{equation} \label{equ:lattice_structure_and_Hiblert_series}
        \HilbertSeries_{\Oca_1\InfQMag\Oca_2}(t)
        +\HilbertSeries_{\Oca_1\SupQMag\Oca_2}(t)
        =\HilbertSeries_{\Oca_1}(t)+\HilbertSeries_{\Oca_2}(t).
    \end{equation}
\end{Theorem}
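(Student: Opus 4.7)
The plan is to reduce the identity of formal power series to an arity-wise identity between dimensions of quotient vector spaces, and then recognize it as the classical Grassmann formula from linear algebra. Write $\Oca_1 = \KMag/_{I_1}$ and $\Oca_2 = \KMag/_{I_2}$, and fix an arity $n \geq 1$. The first step is to observe that, by construction of $\InfQMag$ and $\SupQMag$ in~\eqref{equ:inf_QMag} and~\eqref{equ:sup_QMag}, the arity-$n$ component of $\Oca_1 \InfQMag \Oca_2$ equals $\KMag(n)/(I_1(n) + I_2(n))$ and the arity-$n$ component of $\Oca_1 \SupQMag \Oca_2$ equals $\KMag(n)/(I_1(n) \cap I_2(n))$.

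Next, I would note that $\KMag(n)$ has finite dimension $\Catalan(n - 1)$, so that $I_1(n)$ and $I_2(n)$ are two subspaces of a finite-dimensional vector space. The classical Grassmann formula (see~\cite{Lan02}) yields
\begin{equation}
    \dim\bigl(I_1(n) + I_2(n)\bigr) + \dim\bigl(I_1(n) \cap I_2(n)\bigr)
    = \dim I_1(n) + \dim I_2(n).
\end{equation}
Using $\dim (W/U) = \dim W - \dim U$ for each subspace $U$ of $W := \KMag(n)$ and summing the two equalities $\dim(W/(I_1(n) + I_2(n))) = \dim W - \dim(I_1(n) + I_2(n))$ and $\dim(W/(I_1(n) \cap I_2(n))) = \dim W - \dim(I_1(n) \cap I_2(n))$, I obtain
\begin{equation}
    \dim(\Oca_1 \InfQMag \Oca_2)(n) + \dim(\Oca_1 \SupQMag \Oca_2)(n)
    = \dim \Oca_1(n) + \dim \Oca_2(n).
\end{equation}

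Multiplying both sides by $t^n$ and summing over $n \geq 1$, using the definition~\eqref{equ:hilbert_series} of the Hilbert series adapted to linear operads, produces the claimed identity~\eqref{equ:lattice_structure_and_Hiblert_series}. There is no real obstacle here: the only delicate point is checking that the two quotient operations on ideals $+$ and $\cap$ are computed arity by arity (which is immediate from the definitions, since an ideal $I$ of a linear operad decomposes as $I = \bigoplus_n I(n)$ with $I(n) \subseteq \KMag(n)$), and that $\KMag(n)$ is finite-dimensional so that Grassmann's formula applies verbatim.
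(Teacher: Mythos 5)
Your proof is correct and follows essentially the same route as the paper's: reduce the Hilbert series identity to an arity-wise equality of dimensions, apply the classical Grassmann formula to the subspaces $I_1(n)$ and $I_2(n)$ of the finite-dimensional space $\KMag(n)$, and sum over $n$. The only (harmless) addition is your explicit remark that $+$ and $\cap$ of ideals are computed arity by arity, which the paper uses implicitly.
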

\begin{proof}
  Let $I_1, I_2 \in \IMag$ be such that $\Oca_1 = \KMag/_{I_1}$ and
    $\Oca_2 = \KMag/_{I_2}$. For every positive integer $n$, we have
  \begin{equation}
    \label{equ:lattice_structure_and_coefficients_of_the_Hiblert_series}
        \dim{\left(\left(\Oca_1\InfQMag\Oca_2\right)(n)\right)}+
        \dim{\left(\left(\Oca_1\SupQMag\Oca_2\right)(n)\right)}=
        \dim{\left(\Oca_1(n)\right)}+\dim{\left(\Oca_2(n)\right)}.
  \end{equation}
  Indeed,
    \begin{equation}\begin{split}
        \dim & {\left(\Oca_1\InfQMag\Oca_2(n)\right)}+
            \dim{\left(\Oca_1\SupQMag\Oca_2(n)\right)}\\
        &=\dim{\Big(\KMag(n)/_{\left(I_1+I_2\right)(n)}\Big)}+
            \dim{\Big(\KMag(n)/_{\left(I_1\cap I_2\right)(n)}\Big)}\\
        &=\dim{\left(\KMag(n)\right)}-\dim{\left(I_1(n)+I_2(n)\right)}
            +\dim{\left(\KMag(n)\right)}
            -\dim{\left(I_1(n)\cap I_2(n)\right)} \\
        &=\dim{\left(\KMag(n)\right)}-\dim{\left(I_1(n)\right)}
            + \dim{\left(\KMag(n)\right)}-\dim{\left(I_2(n)\right)}\\
        &=\dim{\Big(\KMag(n)/_{I_1(n)}\Big)}
            +\dim{\Big(\KMag(n)/_{I_2(n)}\Big)}\\
        &=\dim{\left(\Oca_1(n)\right)}+\dim{\left(\Oca_2(n)\right)}.
    \end{split}\end{equation}
    The third equality is due to the Grassmann
    formula~\cite{Lan02} applied to the subspaces $I_1(n)$ and
    $I_2(n)$ of $\KMag(n)$.
    \smallbreak

    From
    \eqref{equ:lattice_structure_and_coefficients_of_the_Hiblert_series},
    and for every positive integer $n$, the terms of degree $n$ in the
    left and right members
    of~\eqref{thm:Grassmann_formula_for_Hilbert_series_of_QMag} are
    equal, which proves
    Theorem~\ref{thm:Grassmann_formula_for_Hilbert_series_of_QMag}.
\end{proof}
\medbreak

We terminate this section with an example illustrating the lattice
constructions on $\QMag$. For that, we introduce various
operads, which requires the following notations, also used in
Section~\ref{sec:CAs_d}. Let, for any integer $\gamma \geq 1$, the
binary trees $\LComb{\gamma}$ and $\RComb{\gamma}$ be respectively the
left and the right combs of degree $\gamma$. These trees are depicted as
\begin{equation}
  \label{equ:left_and_right_combs}
    \LComb{\gamma} = \enspace
    \begin{tikzpicture}[xscale=.26,yscale=.3,Centering]
        \node(0)at(0.00,-5.25){};
        \node(2)at(2.00,-5.25){};
        \node(4)at(4.00,-3.50){};
        \node(6)at(6.00,-1.75){};
        \node[NodeST](1)at(1.00,-3.50){\begin{math}\Product\end{math}};
        \node[NodeST](3)at(3.00,-1.75){\begin{math}\Product\end{math}};
        \node[NodeST](5)at(5.00,0.00){\begin{math}\Product\end{math}};
        \draw[Edge](0)--(1);
        \draw[Edge,dotted](1)edge[]node[font=\tiny]{
            \begin{math}\gamma\! -\! 1\end{math}\hspace*{.6cm}}(3);
        \draw[Edge](2)--(1);
        \draw[Edge](3)--(5);
        \draw[Edge](4)--(3);
        \draw[Edge](6)--(5);
        \node(r)at(5.00,1.5){};
        \draw[Edge](r)--(5);
    \end{tikzpicture}
    \qquad \mbox{and} \qquad
    \RComb{\gamma} =
    \begin{tikzpicture}[xscale=.26,yscale=.3,Centering]
        \node(0)at(0.00,-1.75){};
        \node(2)at(2.00,-3.50){};
        \node(4)at(4.00,-5.25){};
        \node(6)at(6.00,-5.25){};
        \node[NodeST](1)at(1.00,0.00){\begin{math}\Product\end{math}};
        \node[NodeST](3)at(3.00,-1.75){\begin{math}\Product\end{math}};
        \node[NodeST](5)at(5.00,-3.50){\begin{math}\Product\end{math}};
        \draw[Edge](0)--(1);
        \draw[Edge](2)--(3);
        \draw[Edge](3)--(1);
        \draw[Edge](4)--(5);
        \draw[Edge,dotted](5)edge[]node[font=\tiny]{
            \hspace*{.6cm}\begin{math}\gamma\! -\! 1\end{math}}(3);
        \draw[Edge](6)--(5);
        \node(r)at(1.00,1.5){};
        \draw[Edge](r)--(1);
    \end{tikzpicture}\,,
\end{equation}
where the values on the dotted edges denote the number of internal nodes
they contain.
\medbreak

We first recall that the \Def{linear associative operad} is
\begin{equation} \label{equ:linear_associative_operad}
    \KAs:=\KMag/_{\IAs},
\end{equation}
where $\IAs$ is the ideal spanned
\begin{math}
    \LComb{2}-\RComb{2},
\end{math}
and that its Hilbert series is
\begin{equation} \label{equ:Hilbert_series_of_As}
    \HilbertSeries_{\KAs}(t)=\sum_{n\geq 1}t^n,
\end{equation}
We define the \Def{anti-associative operad} by
\begin{equation}
    \label{equ:anti-associative_operad}
    \AAs:=\KMag/_{\IAAs},
\end{equation}
where $\IAAs$ is the ideal spanned by
\begin{math}
    \LComb{2}+\RComb{2}
\end{math}.
Using the Buchberger algorithm for operads~\cite[Section 3.7]{DK10}, we
check that the set of rewrite rules
\begin{equation}
    \label{equ:convergent_presentation_of_AAS}
    \left\{\LComb{2}\Rew -\RComb{2}, \; \RComb{3}\Rew 0 \right\}
\end{equation}
is a convergent presentation of $\AAs$. We point out that this statement
is false if the characteristic of $\K$ is equal to $2$. Moreover, using
the convergent presentation~\eqref{equ:convergent_presentation_of_AAS},
we have
\begin{equation} \label{equ:Hilbert_series_of_AAs}
    \HilbertSeries_{\AAs}(t)=t+t^2+t^3.
\end{equation}
Let us consider the \Def{2-nilpotent operad}~\cite{Zin12} defined by
\begin{equation} \label{equ:2-nilpotent_operad}
    \TwoNil:=\KMag/_{\INiHil},
\end{equation}
where $\INiHil$ is the ideal spanned by the two trees $\LComb{2}$ and
$\RComb{2}$. We have
\begin{equation} \label{equ:Hilbert_series_of_NiHil}
    \HilbertSeries_{\TwoNil}(t)=t+t^2.
\end{equation}
We introduce, for every integer $\gamma\geq 2$, the (nonlinear)
\Def{$\gamma$-right comb operad} $\RC{\gamma}$ as follows. For every
arity $n$, we let
\begin{equation} \label{equ:compositions_of_RC}
    \RC{\gamma}(n):=
    \begin{cases}
        \Mag(n) &
        \mbox{if } n \leq \gamma, \\
        \RComb{n-1} & \mbox{otherwise},
    \end{cases}
\end{equation}
and the partial composition $\Tfr_1 \circ_i \Tfr_2$ is the partial
composition of $\Tfr_1$ and $\Tfr_2$ in $\Mag$ if the integer
\begin{math}
    n:=|\Tfr_1|+|\Tfr_2|-1
\end{math}
is smaller than or equal to $\gamma$, and $\RComb{n}$ otherwise.
Moreover, by definition of the $\gamma$-right comb operad, we have
\begin{equation} \label{equ:Hilbert_series_of_RC}
    \HilbertSeries_{\RC{\gamma}}(t)=
    \sum_{1\leq n\leq\gamma}\Catalan(n - 1) t^n +\sum_{n\geq\gamma+1}t^n.
\end{equation}
\medbreak

\begin{Lemma} \label{lem:presentation_of_RC}
    We have an isomorphism
    \begin{equation} \label{equ:presentation_of_RC}
        \RC{\gamma}\simeq\Mag/_{\CongrRC{\gamma}},
    \end{equation}
    where $\CongrRC{\gamma}$ is the smallest operad congruence satisfying
    $\Tfr\CongrRC{\gamma}\RComb{\gamma}$, where $\Tfr$ runs over all the
    binary trees of arity $\gamma+1$. In other words, $\RC{\gamma}$ is a
    combinatorial realization of
    \begin{math}
        \Mag/_{\CongrRC{\gamma}}.
    \end{math}
\end{Lemma}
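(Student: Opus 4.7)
The plan is to define the natural graded map $\phi : \Mag \to \RC{\gamma}$ by setting $\phi(\Tfr) := \Tfr$ when $|\Tfr| \leq \gamma$ and $\phi(\Tfr) := \RComb{|\Tfr|-1}$ when $|\Tfr| \geq \gamma+1$, and to show that this is an operad morphism that factors through $\CongrRC{\gamma}$ to yield an operad isomorphism $\bar{\phi} : \Mag/_{\CongrRC{\gamma}} \to \RC{\gamma}$. A direct case analysis on whether the arity of $\Tfr \circ_i \Sfr$ exceeds $\gamma$ shows that $\phi$ preserves partial composition, directly using the definition~\eqref{equ:compositions_of_RC} of $\RC{\gamma}$'s partial compositions. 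Since $\phi$ sends every tree of arity $\gamma+1$ to $\RComb{\gamma}$, it identifies all generators of $\CongrRC{\gamma}$, so $\bar{\phi}$ is well-defined, and surjectivity is immediate.

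To prove injectivity of $\bar{\phi}$, it suffices to check that the kernel of $\phi$, viewed as a congruence of $\Mag$, is contained in $\CongrRC{\gamma}$. This reduces to two claims: (a) $\CongrRC{\gamma}$ is trivial on $\Mag(n)$ for every $n \leq \gamma$; (b) $\Tfr \CongrRC{\gamma} \RComb{n-1}$ for every $\Tfr \in \Mag(n)$ and every $n \geq \gamma+1$. Claim~(a) follows because the generators of $\CongrRC{\gamma}$ involve only trees of arity $\gamma+1$ and because the operadic context-closure along with the reflexive-symmetric-transitive closure both preserve arity; no tree of arity $\leq \gamma$ can therefore be related to any other.

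Claim~(b) is proved by induction on $n$, the base $n = \gamma+1$ being the defining generators. For the step, given $\Tfr$ of arity $n+1 \geq \gamma+2$, we do a nested case analysis on $\ell := \LRank(\Tfr)$. If $\ell \leq \gamma$, we construct a top subtree $\Sfr$ of $\Tfr$ of arity exactly $\gamma+1$ whose leftmost leaf corresponds to the leftmost leaf of $\Tfr$: we take as initial pattern the entire left spine $\LComb{\ell}$ of $\Tfr$, whose leftmost leaf already matches the leftmost leaf of $\Tfr$, and then we perform $\gamma-\ell$ successive expansions of \emph{non-leftmost} pattern leaves into internal nodes of the right subtrees hanging off the spine. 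This is possible because, writing $\Tfr = \LComb{\ell} \circ [\Leaf, \Rfr_{\ell-1}, \dots, \Rfr_0]$, the total internal-node budget $\sum_j (|\Rfr_j|-1) = n - \ell$ exceeds $\gamma - \ell$ since $n \geq \gamma+1$. After rewriting $\Sfr$ to $\RComb{\gamma}$ via the congruence, we obtain $\Tfr \CongrRC{\gamma} (\Leaf, \Sfr')$ with $|\Sfr'| = n$, and the main induction hypothesis gives $\Sfr' \CongrRC{\gamma} \RComb{n-1}$, hence $\Tfr \CongrRC{\gamma} \RComb{n}$. If instead $\ell > \gamma$, we use the top $\gamma$ spine nodes of $\Tfr$ as pattern $\LComb{\gamma}$ and rewrite it to $\RComb{\gamma}$; this produces a tree of the same arity but of left rank $1 + \ell - \gamma < \ell$, so we conclude by a nested induction on~$\ell$.

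The main obstacle is the constructive pattern-building step in the case $\ell \leq \gamma$: one must carry out the expansions in such a way that the leftmost pattern leaf is never touched, while still reaching arity $\gamma+1$. The arity bookkeeping above is what ensures this, and once this is in place the rest of the argument consists of bootstrapping between the main induction on arity and the auxiliary induction on left rank.
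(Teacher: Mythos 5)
Your argument is correct, but it takes a genuinely different route from the paper's. The paper's proof stays inside the rewrite-system framework of Section~\ref{sec:operad_Mag}: it orients the defining relations as $\Tfr \Rew \RComb{\gamma}$ for every $\Tfr$ of arity $\gamma+1$ distinct from $\RComb{\gamma}$, asserts that the induced rewrite relation is convergent with $\RComb{n-1}$ as the unique normal form in each arity $n \geq \gamma+1$ (and with all trees as normal forms in arities $n \leq \gamma$), and then reads off the lemma from the general principle that a convergent presentation yields a combinatorial realization on normal forms whose compositions are compose-then-normalize, which visibly match~\eqref{equ:compositions_of_RC}. You instead construct the surjection $\Mag \to \RC{\gamma}$ explicitly and identify its kernel congruence with $\CongrRC{\gamma}$; your claim~(b) is precisely the reachability statement that the paper asserts without proof, and your double induction --- outer on arity, inner on $\LRank$ --- together with the budget count $n - \ell > \gamma - \ell$ (which guarantees that the root pattern can be grown to arity $\gamma+1$ while keeping its leftmost leaf on the leftmost leaf of $\Tfr$) supplies a complete and correct justification of it; the left-rank descent in the case $\ell > \gamma$ uses $\gamma \geq 2$, which holds by the definition of $\RC{\gamma}$. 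What the paper's route buys is brevity and a slightly stronger byproduct, namely an explicit convergent presentation of $\RC{\gamma}$ and hence a PBW basis; what your route buys is self-containedness, since you never need termination or confluence: working directly with the congruence rather than with an oriented rewrite relation makes the unique-normal-form claim unnecessary.
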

\begin{proof}
    Let $\Rew$ be the set of rewrite rules $\Tfr\Rew\RComb{\gamma}$,
    where $\Tfr$ runs over all the binary trees of arity $\gamma+1$
    different from $\RComb{\gamma}$. The unique normal
    form of arity $n\geq\gamma+1$ for the rewrite relation $\RewContext$
    induced by $\Rew$ is $\RComb{n-1}$, so that $\Rew$ is a convergent
    presentation of
    \begin{math}
        \Mag/_{\CongrRC{\gamma}}.
    \end{math}
    Moreover, the normal forms for $\RewContext$ of arity $n\leq\gamma$
    are all the trees of arity $n$ and, by using the convergent
    presentation $\Rew$, the compositions of
    \begin{math}
        \Mag/_{\CongrRC{\gamma}}
    \end{math}
    satisfy~\eqref{equ:compositions_of_RC}. Hence, $\Rew$ is also a
    convergent presentation of $\RC{\gamma}$ which proves the statement
    of the lemma.
\end{proof}
\medbreak

Now, we define the \Def{linear $\gamma$-right comb operad} $\KRC{\gamma}$
as the linear operad spanned by $\RC{\gamma}$. In particular, its Hilbert
series is given in~\eqref{equ:Hilbert_series_of_RC}, and
Lemma~\ref{lem:presentation_of_RC} implies that we have
\begin{math}
  \KRC{\gamma}=\KAs/_{\IRC{\gamma}}
\end{math}
where $\IRC{\gamma}$ is the ideal spanned by the elements
$\Tfr-\RComb{\gamma}$, with $\Tfr$ a binary tree of arity $\gamma+1$.
\medbreak

The lower-bound and the upper-bound of $\KAs$ and $\AAs$ in the lattice
$\LatQMag$ are described by the following.
\medbreak

\begin{Theorem} \label{thm:example_lattice}
    We have
    \begin{equation} \label{equ:example_lower-bound}
        \KAs\InfQMag\AAs=\TwoNil
    \end{equation}
    and
    \begin{equation} \label{equ:example_upper-bound}
        \KAs\SupQMag\AAs=\KRC{3}.
    \end{equation}
\end{Theorem}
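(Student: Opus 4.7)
The plan is to unfold the definitions in \eqref{equ:inf_QMag} and \eqref{equ:sup_QMag} and reduce the statement to the two ideal equalities $\IAs + \IAAs = \INiHil$ and $\IAs \cap \IAAs = \IRC{3}$, which I would treat separately. The sum is tractable by an elementary linear algebra computation on generators; the intersection is much more delicate, and I would circumvent the need for a direct calculation by combining one explicit inclusion with the dimension identity supplied by the Grassmann formula of Theorem~\ref{thm:Grassmann_formula_for_Hilbert_series_of_QMag}.

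For the equality $\IAs+\IAAs=\INiHil$, I would use that $\IAs$ is spanned by $\LComb{2}-\RComb{2}$ and $\IAAs$ by $\LComb{2}+\RComb{2}$; since the characteristic of $\K$ is zero, the half-sum and the half-difference of these two generators are $\LComb{2}$ and $\RComb{2}$, which are exactly the generators of $\INiHil$. This establishes $\INiHil\subseteq\IAs+\IAAs$, and the reverse inclusion is immediate since both generators of $\IAs+\IAAs$ lie in $\INiHil$.

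For the equality $\IAs\cap\IAAs=\IRC{3}$, I would first check that every generator $\Tfr-\RComb{3}$ of $\IRC{3}$, where $\Tfr$ ranges over binary trees of arity $4$, belongs to both $\IAs$ and $\IAAs$. The first containment holds because $\KAs$ has one-dimensional arity components, so $\Tfr$ and $\RComb{3}$ coincide modulo $\IAs$. The second holds because $\AAs(4)=0$ by \eqref{equ:Hilbert_series_of_AAs}, so the whole of $\KMag(4)$, and in particular $\Tfr-\RComb{3}$, lies in $\IAAs$. This yields a canonical surjection $\KRC{3}\twoheadrightarrow\KAs\SupQMag\AAs$, and plugging \eqref{equ:Hilbert_series_of_As}, \eqref{equ:Hilbert_series_of_AAs}, \eqref{equ:Hilbert_series_of_NiHil}, and \eqref{equ:Hilbert_series_of_RC} into Theorem~\ref{thm:Grassmann_formula_for_Hilbert_series_of_QMag} shows that source and target have identical Hilbert series, so the surjection is an isomorphism arity by arity.

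The main potential obstacle is exactly the reverse inclusion $\IAs\cap\IAAs\subseteq\IRC{3}$: as the authors emphasize in the discussion following Theorem~\ref{thm:lattice_structure_of_QMag}, there is no known general procedure for producing a generating set of an intersection of operad ideals. The Grassmann formula is what lets us bypass this difficulty here, reducing the problematic half of the second identity to a routine comparison of Hilbert series.
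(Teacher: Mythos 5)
Your proposal is correct and follows essentially the same route as the paper: the sum of ideals is handled by the same change of generators (half-sum and half-difference giving $\LComb{2}$ and $\RComb{2}$), and the upper bound by the same surjection $\KRC{3}\twoheadrightarrow\KAs\SupQMag\AAs$ obtained from the inclusion $\IRC{3}\subseteq\IAs\cap\IAAs$, upgraded to an isomorphism via the Grassmann formula for Hilbert series. Your justification that $\Tfr-\RComb{3}\in\IAAs$ via $\AAs(4)=0$ is the same fact the paper extracts from the convergent presentation~\eqref{equ:convergent_presentation_of_AAS}, just stated through the Hilbert series.
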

\begin{proof}
    The ideal of relations of $\KAs\InfQMag\AAs$ is equal to
    $\IAs+\IAAs$, so that it is spanned by the two elements
    $\LComb{2}-\RComb{2}$ and $\LComb{2}+\RComb{2}$. By linear
    transformations applied to these generators, $\IAs+\IAAs$ is spanned
    by $\LComb{2}$ and $\RComb{2}$, that is, it is equal to $\INiHil$,
    which proves~\eqref{equ:example_lower-bound}.
    \smallbreak

    Let us now denote by
    \begin{math}
        \pi:\KMag\to\KAs\SupQMag\AAs
    \end{math}
    the natural projection. Let $\Tfr$ be a tree of arity $4$ and let us
    define $\alpha_{\Tfr}:=\Tfr-\RComb{3}$. The elements
    $\alpha_{\Tfr}$ belong to $\IAs$ and to $\IAAs$ since both
    $[\Tfr]_{\IAAs}$ and $[\RComb{3}]_{\IAAs}$ are equal to
    $[0]_{\IAAs}$. The last statement is shown using the convergent
    presentation~\eqref{equ:convergent_presentation_of_AAS} of $\AAs$.
    Hence, the ideal generated by the elements $\alpha_{\Tfr}$, that is
    the ideal of relations of $\KRC{3}$, is included in
    $\IAs\cap\IAAs=\ker(\pi)$, so that $\pi$ induces a surjective
    morphism
    \begin{math}
        \bar{\pi}:\KRC{3}\to\KAs\SupQMag\AAs.
    \end{math}
    We conclude by using Hilbert series:
    $\HilbertSeries_{\KAs\SupQMag\AAs}(t)$ is computed by using the
    Grassmann formula analog with
    Formulas~\eqref{equ:Hilbert_series_of_As},
    \eqref{equ:Hilbert_series_of_AAs},
    and~\eqref{equ:Hilbert_series_of_NiHil}, and it turns out to be
    equal to $\HilbertSeries_{\KRC{3}}(t)$ which is given
    in~\eqref{equ:Hilbert_series_of_RC}. Hence, $\bar{\pi}$ is an
    isomorphism, which proves~\eqref{equ:example_upper-bound}.
\end{proof}
\medbreak

\section{Generalizations of the associative operad}
\label{sec:CAs_d}

In this section, we define comb associative operads and we show that the
set of such operads admits a lattice structure, isomorphic to the
lattice of division for nonnegative integers. We relate this lattice to
the one of the linear magmatic quotients considered in the previous
section. We also provide a finite convergent presentation of the comb
associative operad corresponding to~$3$.
\medbreak

\subsection{Comb associative operads}
Recall first that the \Def{associative operad} $\As$ is the quotient
of $\Mag$ by the smallest operad congruence $\Congr$ satisfying
\begin{equation}
    \begin{tikzpicture}[xscale=.24,yscale=.24,Centering]
        \node(0)at(0.00,-3.33){};
        \node(2)at(2.00,-3.33){};
        \node(4)at(4.00,-1.67){};
        \node[NodeST](1)at(1.00,-1.67)
            {\begin{math}\Product\end{math}};
        \node[NodeST](3)at(3.00,0.00)
            {\begin{math}\Product\end{math}};
        \draw[Edge](0)--(1);
        \draw[Edge](1)--(3);
        \draw[Edge](2)--(1);
        \draw[Edge](4)--(3);
        \node(r)at(3.00,1.5){};
        \draw[Edge](r)--(3);
    \end{tikzpicture}
    \Congr
    \begin{tikzpicture}[xscale=.24,yscale=.24,Centering]
        \node(0)at(0.00,-1.67){};
        \node(2)at(2.00,-3.33){};
        \node(4)at(4.00,-3.33){};
        \node[NodeST](1)at(1.00,0.00)
                {\begin{math}\Product\end{math}};
        \node[NodeST](3)at(3.00,-1.67)
                {\begin{math}\Product\end{math}};
        \draw[Edge](0)--(1);
        \draw[Edge](2)--(3);
        \draw[Edge](3)--(1);
        \draw[Edge](4)--(3);
        \node(r)at(1.00,1.5){};
        \draw[Edge](r)--(1);
    \end{tikzpicture}\,.
\end{equation}
We propose here a generalization of $\Congr$ in order to define
generalizations of $\As$.
\medbreak

As in Section~\ref{sec:Magmatic_operads}, the left and the right combs of
degree $\gamma$ are denoted by $\LComb{\gamma}$ and $\RComb{\gamma}$,
respectively. In the sequel , we shall employ the drawing convention
introduced after~\eqref{equ:left_and_right_combs}: the values on dotted
edges in a binary tree denote the number of internal nodes they contain.
Moreover, we also employ the convention stipulating that dotted edges
with no value have any number of internal nodes. Let us now define for
any $\gamma \geq 1$ the \Def{$\gamma$-comb associative operad}
$\CAs{\gamma}$ as the quotient operad $\Mag/_{\CongrCAs{\gamma}}$ where
$\CongrCAs{\gamma}$ is the smallest operad congruence of $\Mag$
satisfying
\begin{equation} \label{equ:congruence_CAs_gamma}
    \LComb{\gamma} \enspace \CongrCAs{\gamma} \enspace \RComb{\gamma}.
\end{equation}
Notice that $\CongrCAs{1}$ is trivial so that
$\CAs{1} = \Mag$, and that $\CongrCAs{2}$ is the operad congruence
defining $\As$ so that $\CAs{2} = \As$. Let also
\begin{equation}
    \CAsAll := \left\{\CAs{\gamma} : \gamma\geq 1\right\}
\end{equation}
be the set of all the $\gamma$-comb associative operads.
\medbreak

\subsection{Lattice of comb associative operads}
In order to introduce a lattice structure on $\CAsAll$, we begin by
studying operad morphisms between its elements by mean of intermediate
lemmas.
\medbreak

\begin{Lemma} \label{lem:first_dimensions_CAs}
    For all positive integers $\gamma$ and $n$ such that $\gamma \geq 2$
    and $n \leq \gamma + 1$,
    \begin{equation}
        \# \CAs{\gamma}(n) =
        \Catalan(n - 1) - \delta_{n, \gamma + 1},
    \end{equation}
    where $\delta_{x, y}$ is the Kronecker delta.
\end{Lemma}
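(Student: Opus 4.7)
The plan is to observe that the congruence $\CongrCAs{\gamma}$ coincides with the reflexive and transitive closure of the rewrite relation $\RewContext$ induced by the symmetric rule set
\begin{math}
    \Rew := \{\LComb{\gamma} \Rew \RComb{\gamma},\; \RComb{\gamma} \Rew \LComb{\gamma}\},
\end{math}
and then to count equivalence classes arity by arity. Since $\CongrCAs{\gamma}$ preserves arity, I would split the argument according to whether $n \leq \gamma$ or $n = \gamma + 1$.

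For $n \leq \gamma$, both left members of $\Rew$ have arity $\gamma + 1 > n$. By the definition~\eqref{equ:rewrite_relation_induced} of the induced rewrite relation, a tree can be rewritten only if it contains $\LComb{\gamma}$ or $\RComb{\gamma}$ as a subtree, and such a subtree already accounts for $\gamma + 1$ leaves. Hence no tree of arity $n \leq \gamma$ is rewritable, every $\CongrCAs{\gamma}$-class in $\Mag(n)$ is a singleton, and $\#\CAs{\gamma}(n) = \#\Mag(n) = \Catalan(n-1)$.

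For $n = \gamma + 1$, I would exploit the additivity of arity under partial composition: writing a tree of arity $\gamma+1$ in the form $\Afr \circ_i (\Sfr \circ [\Rfr_1,\dots,\Rfr_{\gamma+1}])$ with $\Sfr \in \{\LComb{\gamma}, \RComb{\gamma}\}$ yields the identity $|\Afr| + \sum_j |\Rfr_j| - 1 = \gamma + 1$, and together with $|\Afr| \geq 1$ and each $|\Rfr_j| \geq 1$ this forces $|\Afr| = 1$ and all $|\Rfr_j| = 1$; that is, the tree is exactly $\Sfr$. Consequently the only rewrite steps available within $\Mag(\gamma+1)$ are $\LComb{\gamma} \leftrightarrow \RComb{\gamma}$, and since $\gamma \geq 2$ yields $\LComb{\gamma} \neq \RComb{\gamma}$, the unique non-singleton $\CongrCAs{\gamma}$-class at this arity is $\{\LComb{\gamma}, \RComb{\gamma}\}$. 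Therefore $\#\CAs{\gamma}(\gamma+1) = \Catalan(\gamma) - 1$.

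Combining the two cases gives the stated formula $\#\CAs{\gamma}(n) = \Catalan(n-1) - \delta_{n,\gamma+1}$. There is no serious obstacle here; the crux is the arity bookkeeping in the second case, which certifies that the operadic closure introduces no identifications at arity $\gamma + 1$ beyond the single generating pair.
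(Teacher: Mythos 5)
Your proof is correct and follows essentially the same route as the paper's: the congruence $\CongrCAs{\gamma}$ is trivial in arities $n \leq \gamma$, and in arity $\gamma + 1$ it merges only the single pair $\left\{\LComb{\gamma}, \RComb{\gamma}\right\}$, giving $\Catalan(n-1) - \delta_{n,\gamma+1}$. The paper states these two facts as immediate from the definition, whereas you justify them explicitly by arity bookkeeping on the induced rewrite relation; this is a welcome but not essentially different elaboration.
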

\begin{proof}
    Since the equivalence relation $\CongrCAs{\gamma}$ is trivial on the
    binary trees of degrees $d < \gamma$, and since a binary tree of
    degree $d$ has arity $n := d + 1$, one has
    $\# \CAs{\gamma}(n) = \# \Mag(n) = \Catalan(n - 1)$ with
    $n \leq \gamma$. Besides, by definition of $\CongrCAs{\gamma}$, all
    the $\CongrCAs{\gamma}$-equivalence classes of binary trees of
    degree $\gamma$ are trivial, except one due to the fact that
    $\LComb{\gamma} \ne \RComb{\gamma}$ and
    $\LComb{\gamma} \CongrCAs{\gamma} \RComb{\gamma}$. Therefore, since
    a binary tree of degree $\gamma$ has arity $n := \gamma + 1$,
    \begin{math}
        \# \CAs{\gamma}(n)
        = \# \Mag(\gamma + 1) - 1
        = \Catalan(\gamma + 1 - 1) - 1
        = \Catalan(n - 1) - 1
    \end{math}
    as stated.
\end{proof}
\medbreak

\begin{Lemma} \label{lem:surjective_morphisms_CAs}
    Let $\gamma$ and $\gamma'$ be two positive integers. If there exists
    an operad morphism $\varphi:\CAs{\gamma'} \to \CAs{\gamma}$, then it
    is surjective and satisfies
    \begin{math}
        \varphi\left(\left[\Tfr\right]_{\CongrCAs{\gamma'}}\right)
        = \left[\Tfr\right]_{\CongrCAs{\gamma}}
    \end{math}
    for any binary tree~$\Tfr$.
\end{Lemma}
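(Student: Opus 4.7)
The plan is to exploit the fact that both $\CAs{\gamma'}$ and $\CAs{\gamma}$ are set-theoretic operads generated by a single binary element, namely the class of the corolla. Since $\Mag$ is the free operad on one binary generator and $\CAs{\gamma'}$ is a quotient of $\Mag$, the element $[\Corolla]_{\CongrCAs{\gamma'}}$ generates $\CAs{\gamma'}$ as an operad. An operad morphism is thus determined by its image on this single generator.

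First, I would observe that $\CAs{\gamma}(2) = \left\{[\Corolla]_{\CongrCAs{\gamma}}\right\}$, because $\Mag(2)$ itself is a singleton (there is only one binary tree of arity $2$). Any operad morphism $\varphi : \CAs{\gamma'} \to \CAs{\gamma}$ respects arities, so its restriction to $\CAs{\gamma'}(2)$ takes values in $\CAs{\gamma}(2)$, forcing
\begin{equation}
    \varphi\!\left([\Corolla]_{\CongrCAs{\gamma'}}\right)
    = [\Corolla]_{\CongrCAs{\gamma}}.
\end{equation}
This single equality pins down $\varphi$ completely.

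Next, to establish the explicit formula, I would proceed by induction on the arity $|\Tfr|$ of the binary tree $\Tfr$. The base case $|\Tfr|=1$ is the leaf, which is the unit of $\Mag$; its class is the unit of $\CAs{\gamma'}$, which $\varphi$ sends to the unit of $\CAs{\gamma}$ by definition of an operad morphism. For the inductive step, any $\Tfr$ of arity $n\geq 2$ decomposes as $\Tfr = \Corolla \circ [\Tfr_1, \Tfr_2]$, where $\Tfr_1$ and $\Tfr_2$ are the two subtrees attached at the root. Applying $\varphi$, using its compatibility with the complete composition and the inductive hypothesis on $\Tfr_1$ and $\Tfr_2$, yields
\begin{equation}
    \varphi\!\left([\Tfr]_{\CongrCAs{\gamma'}}\right)
    = [\Corolla]_{\CongrCAs{\gamma}} \circ
      \left[[\Tfr_1]_{\CongrCAs{\gamma}},
            [\Tfr_2]_{\CongrCAs{\gamma}}\right]
    = [\Tfr]_{\CongrCAs{\gamma}},
\end{equation}
as desired.

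Finally, surjectivity is immediate from this formula: every element of $\CAs{\gamma}$ is of the form $[\Tfr]_{\CongrCAs{\gamma}}$ for some binary tree $\Tfr$, and this element is precisely $\varphi\!\left([\Tfr]_{\CongrCAs{\gamma'}}\right)$. There is no real obstacle in this argument; the only subtlety worth emphasizing is the rigidity coming from $\CAs{\gamma}(2)$ being a singleton in the set-theoretic setting (unlike in the linear case of Section~\ref{sec:Magmatic_operads}, where the generator could be rescaled), which is what forces $\varphi$ to coincide with the canonical projection of equivalence classes.
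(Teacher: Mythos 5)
Your argument is correct and follows essentially the same route as the paper's: the morphism is pinned down on the unique binary generator because $\CAs{\gamma}(2)$ is a singleton, the explicit formula $\varphi([\Tfr]_{\CongrCAs{\gamma'}}) = [\Tfr]_{\CongrCAs{\gamma}}$ is obtained by induction (on arity in your version, on degree in the paper's, which is the same induction), and surjectivity follows immediately. The only cosmetic difference is that the paper deduces surjectivity directly from the generator lying in the image, whereas you read it off the formula afterwards; both are fine.
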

\begin{proof}
    The operad $\CAs{\gamma'}$ is generated by one binary generator
    $[\Product]_{\CongrCAs{\gamma'}}$, which is the image of the binary
    generator $\Product$ of $\Mag$ in $\CAs{\gamma'}$. Hence, $\varphi$
    is entirely determined by the image
    $\varphi\left([\Product]_{\CongrCAs{\gamma'}}\right)$. Moreover,
    $\varphi([\Product]_{\CongrCAs{\gamma'}})$ has to be of arity $2$ in
    $\CAs{\gamma}$, so that we necessarily have
    \begin{math}
        \varphi\left([\Product]_{\CongrCAs{\gamma'}}\right)
        =
        [\Product]_{\CongrCAs{\gamma}}.
    \end{math}
    Hence, if $\varphi$ exists, it is the unique operad morphism from
    $\CAs{\gamma'}$ to $\CAs{\gamma}$ determined by the image of
    $[\Product]_{\CongrCAs{\gamma'}}$. In this case,
    $[\Product]_{\CongrCAs{\gamma}}$ being in the image of $\varphi$,
    the latter is surjective. Finally, it follows that $\varphi$ sends
    $\left[\Tfr\right]_{\CongrCAs{\gamma'}}$ to
    $\left[\Tfr\right]_{\CongrCAs{\gamma}}$ by induction on the degree
    of the binary tree~$\Tfr$.
\end{proof}
\medbreak

\begin{Lemma} \label{lem:injective_morphisms_CAs}
    Let $\gamma$ and $\gamma'$ be two positive integers and
    $\varphi:\CAs{\gamma'} \to \CAs{\gamma}$ be an operad morphism.
    Then, $\varphi$ is injective if and only if $\gamma = \gamma'$.
\end{Lemma}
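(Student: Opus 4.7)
The ``if'' direction is immediate from Lemma~\ref{lem:surjective_morphisms_CAs}: when $\gamma = \gamma'$, the map $\varphi$ must send each class $[\Tfr]_{\CongrCAs{\gamma}}$ to itself, so it coincides with the identity of $\CAs{\gamma}$ and is therefore injective.

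For the ``only if'' direction, I would establish the contrapositive: assuming $\gamma \neq \gamma'$ and that $\varphi$ exists, I would show that $\varphi$ is not injective. Lemma~\ref{lem:surjective_morphisms_CAs} guarantees that $\varphi$ is surjective, so since each $\CAs{\gamma}(n)$ is finite, injectivity would be equivalent to the equalities $\# \CAs{\gamma'}(n) = \# \CAs{\gamma}(n)$ for all arities $n$. The plan is therefore to compare dimensions at a well-chosen arity via Lemma~\ref{lem:first_dimensions_CAs}.

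In the subcase $\gamma < \gamma'$ with $\gamma \geq 2$, evaluating at $n = \gamma + 1 \leq \gamma' + 1$ yields $\# \CAs{\gamma'}(\gamma+1) = \Catalan(\gamma)$ whereas $\# \CAs{\gamma}(\gamma+1) = \Catalan(\gamma) - 1$; concretely, the trees $\LComb{\gamma}$ and $\RComb{\gamma}$ are distinct in $\CAs{\gamma'}$ (the congruence $\CongrCAs{\gamma'}$ being trivial on trees of arity $\gamma + 1$) but coincide in $\CAs{\gamma}$ by definition of $\CongrCAs{\gamma}$, witnessing non-injectivity. In the symmetric subcase $\gamma > \gamma'$ with $\gamma' \geq 2$, evaluating at $n = \gamma' + 1 \leq \gamma + 1$ gives $\# \CAs{\gamma'}(\gamma'+1) = \Catalan(\gamma') - 1 < \Catalan(\gamma') = \# \CAs{\gamma}(\gamma'+1)$, contradicting the surjectivity of $\varphi$; hence no such morphism exists and the assertion holds vacuously.

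The boundary cases $\gamma = 1$ or $\gamma' = 1$ require a separate treatment because the combs $\LComb{1}$ and $\RComb{1}$ coincide and Lemma~\ref{lem:first_dimensions_CAs} excludes $\gamma = 1$. When $\gamma' = 1 < \gamma$, the morphism $\varphi : \Mag \to \CAs{\gamma}$ is the canonical projection, which identifies the distinct trees $\LComb{\gamma}$ and $\RComb{\gamma}$, and is hence not injective. When $\gamma = 1 < \gamma'$, the candidate $\varphi : \CAs{\gamma'} \to \Mag$ would have to send the equal classes $[\LComb{\gamma'}]_{\CongrCAs{\gamma'}} = [\RComb{\gamma'}]_{\CongrCAs{\gamma'}}$ to the distinct trees $\LComb{\gamma'}$ and $\RComb{\gamma'}$, an impossibility, so $\varphi$ does not exist. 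The main subtlety of the argument is thus the careful bookkeeping between situations where $\varphi$ exists and fails to be injective and situations where $\varphi$ cannot exist at all; once these cases are separated, the heart of the proof reduces to the clean dimension comparison supplied by Lemma~\ref{lem:first_dimensions_CAs}.
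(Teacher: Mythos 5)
Your proof is correct and follows essentially the same route as the paper's: the ``if'' direction via the uniqueness of the endomorphism from Lemma~\ref{lem:surjective_morphisms_CAs}, and the ``only if'' direction by combining surjectivity with the dimension count of Lemma~\ref{lem:first_dimensions_CAs}. You are in fact somewhat more careful than the paper, which silently applies Lemma~\ref{lem:first_dimensions_CAs} without separating the $\gamma = 1$ or $\gamma' = 1$ cases and without distinguishing the situations where $\varphi$ fails to exist from those where it exists but is not injective.
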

\begin{proof}
    Assume that $\varphi$ is injective. By
    Lemma~\ref{lem:surjective_morphisms_CAs}, $\varphi$ is also
    surjective, so that $\varphi$ is an isomorphism. If
    $\gamma \ne \gamma'$, by Lemma~\ref{lem:first_dimensions_CAs}, there
    is a positive integer $n$ such that
    $\# \CAs{\gamma}(n) \ne \# \CAs{\gamma'}(n)$. This is contradictory
    with the fact that $\CAs{\gamma}$ and $\CAs{\gamma'}$ are
    isomorphic. Hence, $\gamma = \gamma'$.
    \smallbreak

    Conversely, if $\gamma = \gamma'$, the only operad morphism from
    $\CAs{\gamma}$ to itself sends the generator
    $[\Product]_{\CongrCAs{\gamma}}$ to itself. This maps extends as
    an operad morphism into the identity morphism which is of course
    injective.
\end{proof}
\medbreak

\begin{Lemma} \label{lem:morphism_CAs}
    Let $\gamma$ and $\gamma'$ be two positive integers. There exists an
    operad morphism $\varphi:\CAs{\gamma'} \to \CAs{\gamma}$ if and only
    if
    \begin{math}
      \LComb{\gamma'} \CongrCAs{\gamma} \RComb{\gamma'}.
    \end{math}
\end{Lemma}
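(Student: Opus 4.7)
The plan is to use the universal property of $\Mag$ together with Lemma~\ref{lem:surjective_morphisms_CAs}, which pins down the form of any morphism between two comb associative operads. Both directions then reduce to checking that a single pair of generators is respected.

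For the forward direction, suppose that some operad morphism $\varphi:\CAs{\gamma'}\to\CAs{\gamma}$ exists. By Lemma~\ref{lem:surjective_morphisms_CAs}, $\varphi$ acts by $\varphi([\Tfr]_{\CongrCAs{\gamma'}}) = [\Tfr]_{\CongrCAs{\gamma}}$ on every binary tree $\Tfr$. Since the defining relation $\LComb{\gamma'}\CongrCAs{\gamma'}\RComb{\gamma'}$ gives $[\LComb{\gamma'}]_{\CongrCAs{\gamma'}} = [\RComb{\gamma'}]_{\CongrCAs{\gamma'}}$ in $\CAs{\gamma'}$, applying $\varphi$ yields $[\LComb{\gamma'}]_{\CongrCAs{\gamma}} = [\RComb{\gamma'}]_{\CongrCAs{\gamma}}$, that is $\LComb{\gamma'}\CongrCAs{\gamma}\RComb{\gamma'}$.

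For the backward direction, suppose $\LComb{\gamma'}\CongrCAs{\gamma}\RComb{\gamma'}$. Consider the natural projection $\pi:\Mag\to\CAs{\gamma}$, which is the unique operad morphism sending $\Product$ to $[\Product]_{\CongrCAs{\gamma}}$ (universal property of $\Mag$). I will show $\pi$ factors through $\CAs{\gamma'}$. Let $\Congr_\pi$ be the operad congruence on $\Mag$ defined by $\Tfr\Congr_\pi\Tfr'$ whenever $\pi(\Tfr)=\pi(\Tfr')$. By the hypothesis we have $\LComb{\gamma'}\Congr_\pi\RComb{\gamma'}$. Since $\CongrCAs{\gamma'}$ is, by definition, the smallest operad congruence of $\Mag$ equating $\LComb{\gamma'}$ and $\RComb{\gamma'}$, the inclusion $\CongrCAs{\gamma'}\subseteq\Congr_\pi$ holds. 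The universal property of the quotient $\Mag/_{\CongrCAs{\gamma'}} = \CAs{\gamma'}$ therefore produces a well-defined operad morphism $\varphi:\CAs{\gamma'}\to\CAs{\gamma}$ such that $\varphi([\Tfr]_{\CongrCAs{\gamma'}}) = [\Tfr]_{\CongrCAs{\gamma}}$, finishing the proof.

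There is essentially no obstacle beyond a careful use of universal properties: the forward direction is an immediate consequence of Lemma~\ref{lem:surjective_morphisms_CAs}, while the backward direction is the standard observation that to factor through a quotient by a congruence generated by a single relation, one only needs to check that relation holds in the target.
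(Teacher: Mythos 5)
Your proof is correct and follows essentially the same route as the paper: the forward direction applies Lemma~\ref{lem:surjective_morphisms_CAs} to the defining relation of $\CAs{\gamma'}$, and the backward direction observes that the hypothesis forces $\CongrCAs{\gamma'}\subseteq\CongrCAs{\gamma}$ by minimality of $\CongrCAs{\gamma'}$, so the projection onto $\CAs{\gamma}$ factors through $\CAs{\gamma'}$. Your version merely spells out the factorization through the kernel congruence of $\pi$, which the paper compresses into the remark that $\CongrCAs{\gamma}$ is coarser than $\CongrCAs{\gamma'}$.
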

\begin{proof}
    Assume that $\varphi:\CAs{\gamma'} \to \CAs{\gamma}$ is an operad
    morphism. Since
    \begin{math}
      \LComb{\gamma'} \CongrCAs{\gamma'} \RComb{\gamma'},
    \end{math}
    we have
    \begin{equation} \label{equ:morphism_CAs}
        \varphi\left(
        \left[\LComb{\gamma'}\right]_{\CongrCAs{\gamma'}}
        \right)
        =
        \varphi\left(
        \left[\RComb{\gamma'}\right]_{\CongrCAs{\gamma'}}
        \right).
    \end{equation}
    Now, by using Lemma~\ref{lem:surjective_morphisms_CAs}, we obtain
    from~\eqref{equ:morphism_CAs} the relation
    \begin{equation}
        \left[\LComb{\gamma'}\right]_{\CongrCAs{\gamma}}
        =
        \left[\RComb{\gamma'}\right]_{\CongrCAs{\gamma}},
    \end{equation}
    saying that
    \begin{math}
        \LComb{\gamma'} \CongrCAs{\gamma} \RComb{\gamma'}
    \end{math}
    as excepted.
    \smallbreak

    Conversely, when
    \begin{math}
        \LComb{\gamma'}\CongrCAs{\gamma}\RComb{\gamma'},
    \end{math}
    let $\varphi:\CAs{\gamma'}(2) \to \CAs{\gamma}(2)$ be the map
    defined by
    \begin{math}
        \varphi\left(
        \left[\Product\right]_{\CongrCAs{\gamma'}}\right)
        :=
        \left[\Product\right]_{\CongrCAs{\gamma}}.
    \end{math}
    Now, since $\CongrCAs{\gamma}$ is coarser than $\CongrCAs{\gamma'}$,
    $\varphi$ extends (in a unique way) into an operad morphism, whence
    the statement of the lemma.
\end{proof}
\medbreak

We define the binary relation $\OrdCAs$ on $\CAsAll$ as follows: we have
$\CAs{\gamma} \OrdCAs \CAs{\gamma'}$ if and only if there exists a
morphism $\varphi:\CAs{\gamma'} \to \CAs{\gamma}$.
\medbreak

\begin{Proposition}\label{prop:poset_CAs}
    The binary relation $\OrdCAs$ is a partial order relation on
    $\CAsAll$.
\end{Proposition}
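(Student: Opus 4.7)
The plan is to verify reflexivity, antisymmetry, and transitivity of $\OrdCAs$ on $\CAsAll$, using the three preceding lemmas as the main tools.

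For reflexivity, given $\gamma \geq 1$, the identity morphism on $\CAs{\gamma}$ witnesses $\CAs{\gamma}\OrdCAs\CAs{\gamma}$, so nothing has to be proved beyond citing the existence of $\mathrm{id}_{\CAs{\gamma}}$.

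For transitivity, if $\CAs{\gamma}\OrdCAs\CAs{\gamma'}$ and $\CAs{\gamma'}\OrdCAs\CAs{\gamma''}$, pick morphisms $\varphi:\CAs{\gamma'}\to\CAs{\gamma}$ and $\psi:\CAs{\gamma''}\to\CAs{\gamma'}$. Their composition $\varphi\circ\psi:\CAs{\gamma''}\to\CAs{\gamma}$ is an operad morphism, which immediately gives $\CAs{\gamma}\OrdCAs\CAs{\gamma''}$.

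Antisymmetry is the step requiring the most care. Suppose $\CAs{\gamma}\OrdCAs\CAs{\gamma'}$ and $\CAs{\gamma'}\OrdCAs\CAs{\gamma}$, and pick witnessing morphisms $\varphi:\CAs{\gamma'}\to\CAs{\gamma}$ and $\psi:\CAs{\gamma}\to\CAs{\gamma'}$. By Lemma~\ref{lem:surjective_morphisms_CAs} applied to both $\varphi$ and $\psi$, each sends the class of $\Product$ to the class of $\Product$ in the respective target. Hence $\varphi\circ\psi$ sends $[\Product]_{\CongrCAs{\gamma}}$ to itself, and since an endomorphism of $\CAs{\gamma}$ is determined by the image of $[\Product]_{\CongrCAs{\gamma}}$, the composition $\varphi\circ\psi$ is the identity of $\CAs{\gamma}$. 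Symmetrically $\psi\circ\varphi=\mathrm{id}_{\CAs{\gamma'}}$, so $\varphi$ is an isomorphism and in particular injective. Lemma~\ref{lem:injective_morphisms_CAs} then forces $\gamma=\gamma'$, establishing antisymmetry.

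The only mildly subtle point is the antisymmetry argument, where one must observe that uniqueness of the generator's image (from Lemma~\ref{lem:surjective_morphisms_CAs}) pins down any endomorphism of $\CAs{\gamma}$ as the identity, in order to produce an injective morphism and apply Lemma~\ref{lem:injective_morphisms_CAs}. The rest is formal.
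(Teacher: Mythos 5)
Your proof is correct and follows essentially the same route as the paper: reflexivity via the identity, transitivity via composition, and antisymmetry by using Lemma~\ref{lem:surjective_morphisms_CAs} to force $\varphi\circ\psi$ and $\psi\circ\varphi$ to be identities and then Lemma~\ref{lem:injective_morphisms_CAs} to conclude $\gamma=\gamma'$. Your extra detail on why an endomorphism of $\CAs{\gamma}$ fixing the generator must be the identity is exactly the content the paper leaves implicit.
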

\begin{proof}
    The binary relation $\OrdCAs$ is reflexive since there exists the
    identity morphism on $\CAs{\gamma}$ for every positive integer
    $\gamma$. It is transitive since the composite of operad morphisms is
    an operad morphism. Finally, let us assume that there exist  tow
    morphisms $\varphi:\CAs{\gamma'}\to\CAs{\gamma}$ and
    $\psi:\CAs{\gamma}\to\CAs{\gamma'}$. In particular,
    $\psi \circ \varphi$ and $\varphi \circ \psi$ are endomorphisms of
    $\CAs{\gamma'}$ and $\CAs{\gamma}$, respectively. From
    Lemma~\ref{lem:surjective_morphisms_CAs}, these two morphisms are
    identity morphisms, so that $\varphi$ and $\psi$ are injective. From
    Lemma~\ref{lem:injective_morphisms_CAs}, $\gamma$ and $\gamma'$ are
    equal, which proves that $\OrdCAs$ is anti-symmetric. Hence,
    $\OrdCAs$ is a partial order.
\end{proof}
\medbreak

In order to show that $\left(\CAsAll, \OrdCAs\right)$ extends into a
lattice, we relate $\left(\CAsAll, \OrdCAs\right)$ with the lattice of
integers $\left(\N,\mid, \gcd, \lcm\right)$, where $\mid$ denotes the
division relation, $\gcd$ denotes the greatest common divisor, and
$\lcm$ the least common multiple operators, respectively.
\medbreak

Recall that $\LRank(\Tfr)$ denotes the left rank of a binary tree
$\Tfr$, as defined in Section~\ref{sec:operad_Mag}. Besides, to simplify
the notation, we shall write $\bar{a}$ instead of $a - 1$ for any
integer~$a$.
\medbreak

\begin{Lemma}\label{lem:left_rank_and_CongrCAs}
    Let $\gamma \geq 2$ be an integer and let $\Tfr$ and $\Tfr'$ be two
    binary trees. If $\Tfr \CongrCAs{\gamma} \Tfr'$, then
    \begin{equation} \label{equ:left_rank_and_CongrCAs}
        \LRank(\Tfr) \pmod{\bar{\gamma}}
        \enspace = \enspace
        \LRank\left(\Tfr'\right) \pmod{\bar{\gamma}}.
    \end{equation}
\end{Lemma}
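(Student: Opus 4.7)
The plan is to bypass induction on rewriting steps by exhibiting a single, explicit operad congruence that contains the pair $(\LComb{\gamma},\RComb{\gamma})$ and has the required property. Define the binary relation $\sim$ on $\Mag$ by setting $\Tfr \sim \Tfr'$ if and only if $|\Tfr| = |\Tfr'|$ and $\LRank(\Tfr) \equiv \LRank(\Tfr') \pmod{\bar{\gamma}}$. This is clearly an equivalence relation respecting arities. Since $\LRank(\LComb{\gamma}) = \gamma$ and $\LRank(\RComb{\gamma}) = 1$, one has $\LComb{\gamma} \sim \RComb{\gamma}$. If I can show that $\sim$ is an operad congruence of $\Mag$, then by minimality of $\CongrCAs{\gamma}$, the inclusion $\CongrCAs{\gamma} \subseteq \;\sim$ follows, which is precisely~\eqref{equ:left_rank_and_CongrCAs}.

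The heart of the argument is the auxiliary formula
\begin{equation}
    \LRank(\Tfr \circ_i \Sfr) =
    \begin{cases}
        \LRank(\Tfr) + \LRank(\Sfr) & \text{if } i = 1, \\
        \LRank(\Tfr) & \text{if } i \geq 2,
    \end{cases}
\end{equation}
valid for all $\Tfr, \Sfr \in \Mag$ and $i \in [|\Tfr|]$. I would derive this from the prefix-word description of partial composition recalled in the proof of Lemma~\ref{lem:prefix_word_termination}: $\PrefixWord(\Tfr \circ_i \Sfr)$ is obtained by substituting the $i$-th $\Zero$ of $\PrefixWord(\Tfr)$ by $\PrefixWord(\Sfr)$. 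When $\Tfr$ is not a leaf, its first $\Zero$ sits precisely at position $\LRank(\Tfr)+1$. If $i \geq 2$, the substitution occurs strictly past this first $\Zero$, so the initial $\Two$-block is unchanged, giving the second case. If $i=1$, the substitution replaces that first $\Zero$ by a word whose initial $\Two$-block has length $\LRank(\Sfr)$ and is followed by a $\Zero$ (or by the end of the word if $\Sfr$ is the leaf, in which case $\LRank(\Sfr)=0$); the new initial $\Two$-block therefore has length $\LRank(\Tfr) + \LRank(\Sfr)$. The degenerate case where $\Tfr$ is the leaf forces $i=1$ and $\Tfr \circ_1 \Sfr = \Sfr$, consistent with the formula.

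With the formula in hand, compatibility of $\sim$ with partial composition is immediate: if $\Tfr \sim \Tfr'$ and $\Sfr \sim \Sfr'$, then $\LRank(\Tfr \circ_i \Sfr) - \LRank(\Tfr' \circ_i \Sfr')$ is either $\LRank(\Tfr) - \LRank(\Tfr')$ (when $i \geq 2$) or $(\LRank(\Tfr) - \LRank(\Tfr')) + (\LRank(\Sfr) - \LRank(\Sfr'))$ (when $i = 1$), both of which vanish modulo~$\bar{\gamma}$. This establishes that $\sim$ is an operad congruence and concludes the proof. The main (mild) obstacle is verifying the prefix-word formula cleanly through all edge cases, particularly the interaction between the leaf unit and the $i=1$ composition; this is a finite check but deserves explicit handling to avoid hidden assumptions.
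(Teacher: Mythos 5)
Your proof is correct, but it takes a genuinely different route from the paper's. The paper works directly with the rewrite relation $\RewContext$ induced by $\LComb{\gamma} \Rew \RComb{\gamma}$: it decomposes an arbitrary tree along its left spine as $\LComb{\LRank(\Tfr)} \circ [\Leaf, \Tfr_1, \dots, \Tfr_{\LRank(\Tfr)}]$, distinguishes whether a single rewrite step occurs inside one of the hanging subtrees $\Tfr_i$ (left rank unchanged) or on the left spine itself (left rank drops by exactly $\bar{\gamma}$), and then passes to the reflexive--symmetric--transitive closure. You instead exhibit the target invariant as an explicit operad congruence $\sim$ containing the pair $\left(\LComb{\gamma}, \RComb{\gamma}\right)$ and invoke minimality of $\CongrCAs{\gamma}$; the only computation needed is the composition formula $\LRank(\Tfr \circ_1 \Sfr) = \LRank(\Tfr) + \LRank(\Sfr)$ and $\LRank(\Tfr \circ_i \Sfr) = \LRank(\Tfr)$ for $i \geq 2$, which you correctly justify via prefix words (and which handles the unit case $\Tfr = \Leaf$ consistently, since $\LRank(\Leaf) = 0$). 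Your argument is arguably cleaner: it avoids any case analysis on where a rewrite step lands and generalizes immediately to any congruence generated by pairs of trees with congruent left ranks; checking that $\sim$ is a congruence reduces to verifying it is compatible with a single partial composition, which your formula makes routine. What the paper's approach buys in exchange is finer information consonant with its rewriting-theoretic setup: it shows that each oriented rewrite step either preserves $\LRank$ or decreases it by exactly $\bar{\gamma}$, a fact in the spirit of the termination and completion arguments used elsewhere in the text. Both proofs establish the lemma; yours is a valid and somewhat more structural alternative.
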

\begin{proof}
    Consider here the rewrite rule $\Rew$ on $\Mag$ satisfying
    $\LComb{\gamma} \Rew \RComb{\gamma}$. Let us show that the rewrite
    relation $\RewContext$ induced by $\Rew$ is such that
    $\Tfr \RewContext \Tfr'$ implies~\eqref{equ:left_rank_and_CongrCAs}.
    Any binary tree $\Tfr$ decomposes as
    \begin{math}
        \Tfr =
        \LComb{\LRank\left(\Tfr\right)} \circ
        \left[\Leaf, \Tfr_1, \dots,
        \Tfr_{\LRank\left(\Tfr\right)} \right]
    \end{math}
    where the $\Tfr_i$ are binary trees. Now, if
    $\Tfr \RewContext \Tfr'$, then one among the following two cases
    occurs.
    \begin{enumerate}[label={(\it\roman*)}]
        \item The rewrite step is applied into one of the trees
        $\Tfr_i$, that is there exists $\Tfr_i'$ such that
        \begin{math}
            \Tfr' = \LComb{\LRank\left(\Tfr\right)}
            \circ \left[\Leaf, \Tfr_1, \dots, \Tfr_i', \dots,
                \Tfr_{\LRank\left(\Tfr\right)}
            \right]
        \end{math}
        so that
        \begin{math}
            \LRank\left(\Tfr'\right) = \LRank\left(\Tfr\right).
        \end{math}
        \item The rewrite step is applied into the left branch
          beginning
            at the root of $\Tfr$, that is there exists $i$ such that
            \begin{math}
                \Tfr' = \LComb{\LRank\left(\Tfr\right)-\bar{\gamma}}
                \circ \left[\Leaf, \Tfr_1, \dots, \RComb{\bar{\gamma}}
                \circ
                \left[\Tfr_{i},\dots, \Tfr_{i+\bar{\gamma}}\right],
                \dots, \Tfr_{\LRank\left(\Tfr\right)} \right]
            \end{math}
            so that
            \begin{math}
                \LRank\left(\Tfr'\right) =
                \LRank\left(\Tfr\right) - \bar{\gamma} =
                \LRank\left(\Tfr\right) \pmod{\bar{\gamma}}
            \end{math}.
    \end{enumerate}
    This implies~\eqref{equ:left_rank_and_CongrCAs}. Finally, since
    $\CongrCAs{\gamma}$ is the reflexive, symmetric, and transitive
    closure of~$\RewContext$, the statement of the lemma follows.
\end{proof}
\medbreak

\begin{Proposition} \label{prop:division_CAs}
    Let $\gamma$ and $\gamma'$ be two positive integers such that
    $\gamma \geq 2$. Then, there
    exists a morphism $\varphi:\CAs{\gamma'} \to \CAs{\gamma}$ if and
    only if $\bar{\gamma} \mid \bar{\gamma'}$.
\end{Proposition}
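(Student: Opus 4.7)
The plan is to reduce the statement to the single relation $\LComb{\gamma'} \CongrCAs{\gamma} \RComb{\gamma'}$ by invoking Lemma~\ref{lem:morphism_CAs}, and then to treat each implication separately.

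For the necessity direction ($\Rightarrow$), suppose a morphism $\varphi : \CAs{\gamma'} \to \CAs{\gamma}$ exists; Lemma~\ref{lem:morphism_CAs} then yields $\LComb{\gamma'} \CongrCAs{\gamma} \RComb{\gamma'}$. I would next apply Lemma~\ref{lem:left_rank_and_CongrCAs} (which needs precisely the hypothesis $\gamma \geq 2$) to force
\begin{math}
    \LRank\left(\LComb{\gamma'}\right) \equiv \LRank\left(\RComb{\gamma'}\right) \pmod{\bar{\gamma}}.
\end{math}
Direct inspection of the prefix words gives $\PrefixWord\left(\LComb{\gamma'}\right) = \Two^{\gamma'}\Zero^{\gamma'+1}$, hence $\LRank\left(\LComb{\gamma'}\right) = \gamma'$, and $\PrefixWord\left(\RComb{\gamma'}\right) = \Two\Zero\Two\Zero\cdots\Two\Zero\Zero$, hence $\LRank\left(\RComb{\gamma'}\right) = 1$. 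Therefore $\gamma' - 1 \equiv 0 \pmod{\bar{\gamma}}$, i.e., $\bar{\gamma} \mid \bar{\gamma'}$.

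For the sufficiency direction ($\Leftarrow$), write $\bar{\gamma'} = k\bar{\gamma}$ with $k \in \N$, and I would prove by induction on $k$ that $\LComb{k\bar{\gamma}+1} \CongrCAs{\gamma} \RComb{k\bar{\gamma}+1}$, after which Lemma~\ref{lem:morphism_CAs} delivers the morphism. The base case $k = 0$ is immediate since $\LComb{1} = \RComb{1}$. For the inductive step, I would use the decomposition
\begin{math}
    \LComb{k\bar{\gamma}+1} = \LComb{(k-1)\bar{\gamma}+1} \circ_1 \LComb{\bar{\gamma}},
\end{math}
which holds because grafting $\LComb{\bar{\gamma}}$ at the bottom-leftmost leaf of $\LComb{(k-1)\bar{\gamma}+1}$ simply extends its left spine by $\bar{\gamma}$ nodes. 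Combined with the induction hypothesis and the fact that $\CongrCAs{\gamma}$ is an operad congruence, this gives
\begin{math}
    \LComb{k\bar{\gamma}+1} \CongrCAs{\gamma} \RComb{(k-1)\bar{\gamma}+1} \circ_1 \LComb{\bar{\gamma}}.
\end{math}
In the right-hand tree, the root's left subtree is $\LComb{\bar{\gamma}}$, so the left spine has length $\bar{\gamma}+1 = \gamma$ and the top $\gamma$ internal nodes form a $\LComb{\gamma}$ subtree whose $\gamma+1$ leaves are, from left to right, the $\gamma$ single leaves of $\LComb{\bar{\gamma}}$ followed by $\RComb{(k-1)\bar{\gamma}}$, the original right subtree of the root of $\RComb{(k-1)\bar{\gamma}+1}$. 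Applying the defining relation $\LComb{\gamma} \CongrCAs{\gamma} \RComb{\gamma}$ at this top subtree yields a tree consisting of a $\RComb{\gamma}$ whose rightmost leaf has been replaced by $\RComb{(k-1)\bar{\gamma}}$; merging these two right combs gives $\RComb{\gamma + (k-1)\bar{\gamma}} = \RComb{k\bar{\gamma}+1}$, closing the induction.

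The only delicate moment is the final leaf-by-leaf bookkeeping inside $\RComb{(k-1)\bar{\gamma}+1} \circ_1 \LComb{\bar{\gamma}}$: one must correctly locate the top $\LComb{\gamma}$, track where each of its $\gamma+1$ leaves lives (in particular, that the rightmost leaf is precisely the subtree $\RComb{(k-1)\bar{\gamma}}$), and recognise that the output of the rewrite merges cleanly into $\RComb{k\bar{\gamma}+1}$. This is not deep, but it is the step that requires the most care in the write-up.
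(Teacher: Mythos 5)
Your proof is correct and follows essentially the same route as the paper: necessity via Lemma~\ref{lem:morphism_CAs} combined with the left-rank invariant of Lemma~\ref{lem:left_rank_and_CongrCAs} (with the same computation $\LRank(\LComb{\gamma'})-\LRank(\RComb{\gamma'})=\bar{\gamma'}$), and sufficiency by repeatedly applying the rewrite rule $\LComb{\gamma}\Rew\RComb{\gamma}$ along the left spine. Your induction on $k$ with the decomposition $\LComb{k\bar{\gamma}+1}=\LComb{(k-1)\bar{\gamma}+1}\circ_1\LComb{\bar{\gamma}}$ is just a clean formalization of the explicit rewriting chain the paper displays pictorially, and the bookkeeping you flag checks out.
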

\begin{proof}
    From Lemma~\ref{lem:morphism_CAs}, it is enough to show that
    $\LComb{\gamma'} \CongrCAs{\gamma} \RComb{\gamma'}$ if and only if
    $\bar{\gamma} \mid \bar{\gamma'}$. If
    \begin{math}
        \LComb{\gamma'}\CongrCAs{\gamma}\RComb{\gamma'},
    \end{math}
    as a consequence of the existence of a surjective morphism $\varphi$
    from $\CAs{\gamma'}$ to $\CAs{\gamma}$ and
    Lemmas~\ref{lem:first_dimensions_CAs}
    and~\ref{lem:surjective_morphisms_CAs}, one has $\gamma' = 1$ or
    $\gamma \leq \gamma'$. Since
    \begin{equation}
        \LRank\left(\LComb{\gamma'}\right)
        - \LRank\left(\RComb{\gamma'}\right)
        = \gamma' - 1
        = \bar{\gamma'},
    \end{equation}
    by using Lemma~\ref{lem:left_rank_and_CongrCAs}, we deduce that
    $\bar{\gamma'}$ is divisible by $\bar{\gamma}$, which shows the
    direct implication.
    \smallbreak

    Conversely, if $\bar{\gamma} \mid \bar{\gamma'}$, the rewrite rule
    $\Rew$ on $\Mag$ satisfying $\LComb{\gamma} \Rew \RComb{\gamma}$
    induces the sequence
    \begin{multline}
        \LComb{\gamma'} \enspace = \enspace
        \begin{tikzpicture}[xscale=.19,yscale=.25,Centering]
            \node(0)at(0.00,-13.12){};
            \node(10)at(10.00,-5.62){};
            \node(12)at(12.00,-3.75){};
            \node(14)at(14.00,-1.88){};
            \node(2)at(2.00,-13.12){};
            \node(4)at(4.00,-11.25){};
            \node(6)at(6.00,-9.38){};
            \node(8)at(8.00,-7.50){};
            \node[NodeST](1)at(1.00,-11.25)
                {\begin{math}\Product\end{math}};
            \node[NodeST](11)at(11.00,-1.88)
                {\begin{math}\Product\end{math}};
            \node[NodeST](13)at(13.00,0.00)
                {\begin{math}\Product\end{math}};
            \node[NodeST](3)at(3.00,-9.38)
                {\begin{math}\Product\end{math}};
            \node[NodeST](5)at(5.00,-7.50)
                {\begin{math}\Product\end{math}};
            \node[NodeST](7)at(7.00,-5.62)
                {\begin{math}\Product\end{math}};
            \node[NodeST](9)at(9.00,-3.75)
                {\begin{math}\Product\end{math}};
            \draw[Edge](0)--(1);
            \draw[Edge,dotted](1)edge[]node[font=\footnotesize]{
                \begin{math}\bar{\gamma}\end{math}\hspace*{.5cm}}(3);
            \draw[Edge](10)--(9);
            \draw[Edge](11)--(13);
            \draw[Edge](12)--(11);
            \draw[Edge](14)--(13);
            \draw[Edge](2)--(1);
            \draw[Edge,dotted](3)--(5);
            \draw[Edge](4)--(3);
            \draw[Edge,dotted](5)edge[]node[font=\footnotesize]{
                \begin{math}\bar{\gamma}\end{math}\hspace*{.5cm}}(7);
            \draw[Edge](6)--(5);
            \draw[Edge](7)--(9);
            \draw[Edge](8)--(7);
            \draw[Edge,dotted](9)edge[]node[font=\footnotesize]{
                \begin{math}\bar{\gamma}\end{math}\hspace*{.5cm}}(11);
            \node(r)at(13.00,1.41){};
            \draw[Edge](r)--(13);
        \end{tikzpicture}
        \enspace \RewContext \enspace
        \begin{tikzpicture}[xscale=.19,yscale=.2,Centering]
            \node(0)at(0.00,-12.50){};
            \node(10)at(10.00,-5.00){};
            \node(12)at(12.00,-7.50){};
            \node(14)at(14.00,-7.50){};
            \node(2)at(2.00,-12.50){};
            \node(4)at(4.00,-10.00){};
            \node(6)at(6.00,-7.50){};
            \node(8)at(8.00,-5.00){};
            \node[NodeST](1)at(1.00,-10.00)
                {\begin{math}\Product\end{math}};
            \node[NodeST](11)at(11.00,-2.50)
                {\begin{math}\Product\end{math}};
            \node[NodeST](13)at(13.00,-5.00)
                {\begin{math}\Product\end{math}};
            \node[NodeST](3)at(3.00,-7.50)
                {\begin{math}\Product\end{math}};
            \node[NodeST](5)at(5.00,-5.00)
                {\begin{math}\Product\end{math}};
            \node[NodeST](7)at(7.00,-2.50)
                {\begin{math}\Product\end{math}};
            \node[NodeST](9)at(9.00,0.00)
                {\begin{math}\Product\end{math}};
            \draw[Edge](0)--(1);
            \draw[Edge,dotted](1)edge[]node[font=\footnotesize]{
                \begin{math}\bar{\gamma}\end{math}\hspace*{.5cm}}(3);
            \draw[Edge](10)--(11);
            \draw[Edge](11)--(9);
            \draw[Edge](12)--(13);
            \draw[Edge,dotted](13)edge[]node[font=\footnotesize]{
                \hspace*{.5cm}\begin{math}\bar{\gamma}\end{math}}(11);
            \draw[Edge](14)--(13);
            \draw[Edge](2)--(1);
            \draw[Edge,dotted](3)--(5);
            \draw[Edge](4)--(3);
            \draw[Edge,dotted](5)edge[]node[font=\footnotesize]{
                \begin{math}\bar{\gamma}\end{math}\hspace*{.5cm}}(7);
            \draw[Edge](6)--(5);
            \draw[Edge](7)--(9);
            \draw[Edge](8)--(7);
            \node(r)at(9.00,1.88){};
            \draw[Edge](r)--(9);
        \end{tikzpicture} \\
        \enspace \RewContext \enspace
        \begin{tikzpicture}[xscale=.19,yscale=.2,Centering]
            \node(0)at(0.00,-7.50){};
            \node(10)at(10.00,-10.00){};
            \node(12)at(12.00,-12.50){};
            \node(14)at(14.00,-12.50){};
            \node(2)at(2.00,-7.50){};
            \node(4)at(4.00,-5.00){};
            \node(6)at(6.00,-5.00){};
            \node(8)at(8.00,-7.50){};
            \node[NodeST](1)at(1.00,-5.00)
                {\begin{math}\Product\end{math}};
            \node[NodeST](11)at(11.00,-7.50)
                {\begin{math}\Product\end{math}};
            \node[NodeST](13)at(13.00,-10.00)
                {\begin{math}\Product\end{math}};
            \node[NodeST](3)at(3.00,-2.50)
                {\begin{math}\Product\end{math}};
            \node[NodeST](5)at(5.00,0.00)
                {\begin{math}\Product\end{math}};
            \node[NodeST](7)at(7.00,-2.50)
                {\begin{math}\Product\end{math}};
            \node[NodeST](9)at(9.00,-5.00)
                {\begin{math}\Product\end{math}};
            \draw[Edge](0)--(1);
            \draw[Edge,dotted](1)edge[]node[font=\footnotesize]{
                \begin{math}\bar{\gamma}\end{math}\hspace*{.5cm}}(3);
            \draw[Edge](10)--(11);
            \draw[Edge](11)--(9);
            \draw[Edge](12)--(13);
            \draw[Edge,dotted](13)edge[]node[font=\footnotesize]{
                \hspace*{.5cm}\begin{math}\bar{\gamma}\end{math}}(11);
            \draw[Edge](14)--(13);
            \draw[Edge](2)--(1);
            \draw[Edge,dotted](3)--(5);
            \draw[Edge](4)--(3);
            \draw[Edge](6)--(7);
            \draw[Edge](7)--(5);
            \draw[Edge](8)--(9);
            \draw[Edge,dotted](9)edge[]node[font=\footnotesize]{
                \hspace*{.5cm}\begin{math}\bar{\gamma}\end{math}}(7);
            \node(r)at(5.00,1.88){};
            \draw[Edge](r)--(5);
        \end{tikzpicture}
        \enspace \RewContextRT \enspace
        \begin{tikzpicture}[xscale=.19,yscale=.25,Centering]
            \node(0)at(0.00,-1.88){};
            \node(10)at(10.00,-11.25){};
            \node(12)at(12.00,-13.12){};
            \node(14)at(14.00,-13.12){};
            \node(2)at(2.00,-3.75){};
            \node(4)at(4.00,-5.62){};
            \node(6)at(6.00,-7.50){};
            \node(8)at(8.00,-9.38){};
            \node[NodeST](1)at(1.00,0.00)
                {\begin{math}\Product\end{math}};
            \node[NodeST](11)at(11.00,-9.38)
                {\begin{math}\Product\end{math}};
            \node[NodeST](13)at(13.00,-11.25)
                {\begin{math}\Product\end{math}};
            \node[NodeST](3)at(3.00,-1.88)
                {\begin{math}\Product\end{math}};
            \node[NodeST](5)at(5.00,-3.75)
                {\begin{math}\Product\end{math}};
            \node[NodeST](7)at(7.00,-5.62)
                {\begin{math}\Product\end{math}};
            \node[NodeST](9)at(9.00,-7.50)
                {\begin{math}\Product\end{math}};
            \draw[Edge](0)--(1);
            \draw[Edge](10)--(11);
            \draw[Edge,dotted](11)--(9);
            \draw[Edge](12)--(13);
            \draw[Edge,dotted](13)edge[]node[font=\footnotesize]{
                \hspace*{.5cm}\begin{math}\bar{\gamma}\end{math}}(11);
            \draw[Edge](14)--(13);
            \draw[Edge](2)--(3);
            \draw[Edge](3)--(1);
            \draw[Edge](4)--(5);
            \draw[Edge,dotted](5)edge[]node[font=\footnotesize]{
                \hspace*{.5cm}\begin{math}\bar{\gamma}\end{math}}(3);
            \draw[Edge](6)--(7);
            \draw[Edge](7)--(5);
            \draw[Edge](8)--(9);
            \draw[Edge,dotted](9)edge[]node[font=\footnotesize]{
                \hspace*{.5cm}\begin{math}\bar{\gamma}\end{math}}(7);
            \node(r)at(1.00,1.41){};
            \draw[Edge](r)--(1);
        \end{tikzpicture}
        \enspace = \enspace
        \RComb{\gamma'}
    \end{multline}
    of rewrite steps, where dotted edges denotes left or right
    comb trees of degree $\gamma - 1$. Hence, since
    $\CongrCAs{\gamma}$ is the reflexive, symmetric, and transitive
    closure of $\RewContext$, we
    have~$\LComb{\gamma'}\CongrCAs{\gamma}\RComb{\gamma'}$.
\end{proof}
\medbreak

Proposition~\ref{prop:division_CAs} implies the following result.
\medbreak

\begin{Theorem}\label{thm:lattice_CAs}
  The tuple $\LatCAs$ is a lattice, where $\InfCAs$ and $\SupCAs$ are
  defined, for all positive integers $\gamma$ and $\gamma'$, by
    \begin{equation}
      \CAs{\gamma}\InfCAs\CAs{\gamma'}
      :=\CAs{\gcd \left(\bar{\gamma}, \bar{\gamma'}\right)}
    \end{equation}
    and
    \begin{equation}
      \CAs{\gamma}\SupCAs\CAs{\gamma'}
      :=\CAs{\lcm \left(\bar{\gamma}, \bar{\gamma'}\right)}.
    \end{equation}
\end{Theorem}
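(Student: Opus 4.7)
The plan is to transport the classical divisibility lattice structure $(\N, \mid, \gcd, \lcm)$ on the nonnegative integers onto $\CAsAll$ via the shift map $\Phi : \CAs{\gamma} \longmapsto \bar{\gamma} = \gamma - 1$.

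First, I would verify that $\Phi$ is a bijection from $\CAsAll$ onto $\N$. Surjectivity is immediate from the definition of $\CAsAll$. For injectivity, Lemma~\ref{lem:first_dimensions_CAs} shows that $\# \CAs{\gamma}(\gamma + 1) = \Catalan(\gamma) - 1$, whereas any $\CAs{\gamma'}$ with $\gamma' > \gamma$ still agrees with $\Mag$ on arity $\gamma + 1$; this numerical invariant suffices to distinguish the $\CAs{\gamma}$ as members of $\CAsAll$.

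Next, I would establish that $\Phi$ is a poset isomorphism from $(\CAsAll, \OrdCAs)$ onto $(\N, \mid)$. For $\gamma \geq 2$ this is precisely the content of Proposition~\ref{prop:division_CAs}. The boundary case $\gamma = 1$ (that is, $\CAs{\gamma} = \Mag$) is not covered by that proposition and must be checked by hand: the canonical projection $\Mag \twoheadrightarrow \CAs{\gamma'}$ witnesses $\CAs{\gamma'} \OrdCAs \Mag$ for every $\gamma'$, in agreement with the fact that every integer divides $\bar{1} = 0$; conversely, for $\gamma' \geq 2$ no morphism $\CAs{\gamma'} \to \Mag$ can exist, since Lemma~\ref{lem:surjective_morphisms_CAs} would force it to be surjective while Lemma~\ref{lem:first_dimensions_CAs} yields $\# \CAs{\gamma'}(\gamma' + 1) < \# \Mag(\gamma' + 1)$, in agreement with $0 \nmid \bar{\gamma'}$ when $\bar{\gamma'} \geq 1$.

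Finally, since $(\N, \mid, \gcd, \lcm)$ is a well-known lattice, transporting through the order isomorphism $\Phi$ automatically endows $(\CAsAll, \OrdCAs)$ with a lattice structure whose meet and join operations coincide with $\InfCAs$ and $\SupCAs$ as defined in the statement. I do not expect any genuine obstacle: the only subtlety is the boundary case $\gamma = 1$, handled above, which extends Proposition~\ref{prop:division_CAs} so that the correspondence with $(\N, \mid)$ is valid for every $\gamma \geq 1$.
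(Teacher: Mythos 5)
Your proof is correct and follows exactly the route the paper intends: the paper derives this theorem in one line from Proposition~\ref{prop:division_CAs}, by transporting the lattice $\left(\N,\mid,\gcd,\lcm\right)$ along $\CAs{\gamma}\mapsto\bar{\gamma}$. Your treatment is in fact slightly more careful than the paper's, since you explicitly handle the boundary case $\gamma=1$ (excluded from the hypotheses of Proposition~\ref{prop:division_CAs}) and the injectivity of the correspondence, both of which the paper leaves implicit.
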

\medbreak

The lattice $\left(\N, \mid, \gcd, \lcm\right)$ admits $1$ as minimum
element and $0$ as maximum element since any nonnegative integer is
divisible by $1$ and divides $0$. These properties translate as follows
for $\LatCAs$: the minimum element is $\As=\CAs{2}$ and the maximum
element is $\Mag=\CAs{1}$. Algebraically, this says that any comb
associative operad projects onto $\As$ and is a quotient of~$\Mag$.
\medbreak

We end this section by relating the lattice $\LatQMag$ introduced in
Section~\ref{sec:Magmatic_operads} with the lattice $\LatCAs$. As
explained in Section~\ref{sec:operad_Mag}, a set-theoretic operad can
be embedded into a linear operad, so that the operads $\CAs{\gamma}$ can
be embedded into quotient operads $\KCAs{\gamma}$ of $\KMag$. Formally,
the operad $\KCAs{\gamma}$ is equal to $\KMag/_{I_{\gamma}}$, where
$I_{\gamma}$ is the operad ideal of $\KMag$ generated by
$\LComb{\gamma}-\RComb{\gamma}$. We obtain a new lattice $\LatKCAs$,
where $\KCAsAll$ is the set of all operads $\KCAs{\gamma}$. In this
linear framework, the condition $\KCAs{\gamma}\OrdCAs\KCAs{\gamma'}$
means that the dimension of the space
$\Hom\left(\KCAs{\gamma'},\KCAs{\gamma}\right)$ is equal to $1$. Hence,
$\LatKCAs$ is related to $\LatQMag$ by the following theorem.
\medbreak

\begin{Theorem} \label{thm:inclusion_lattice_CAs}
    The inclusion
    \begin{math}
        \iota:\left(\KCAsAll,\OrdCAs\right)
        \to
        \left(\QMag,\OrdQMag\right)
    \end{math}
    is nondecreasing. In particular, for all positive integers $\gamma$
    and $\gamma'$, we have
    \begin{equation} \label{equ:comparison_of_lattice_operations}
        \KCAs{\gcd \left(\bar{\gamma}, \bar{\gamma'}\right)}
        \OrdQMag
        \KCAs{\gamma}\InfQMag\KCAs{\gamma'}
    \end{equation}
    and
    \begin{equation}
        \KCAs{\gamma}\SupQMag\KCAs{\gamma'}
        \OrdQMag
        \KCAs{\lcm \left(\bar{\gamma}, \bar{\gamma'}\right)}.
    \end{equation}
\end{Theorem}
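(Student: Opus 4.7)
The plan is to derive both parts of the statement almost immediately from the definitions, once we observe that the two orderings agree on $\KCAsAll$. First, I would establish the monotonicity of $\iota$ by unpacking definitions. The text defines $\KCAs{\gamma} \OrdCAs \KCAs{\gamma'}$ to mean $\dim \Hom(\KCAs{\gamma'}, \KCAs{\gamma}) = 1$, which is precisely the defining condition of $\KCAs{\gamma} \OrdQMag \KCAs{\gamma'}$. So the two orders coincide on $\KCAsAll$ and $\iota$ is nondecreasing for free. One can alternatively phrase this via Proposition~\ref{prop:order_relations_on_QMag_and_ideals}: in both cases the ordering translates to the inclusion $I_{\gamma'} \subseteq I_{\gamma}$ of the corresponding ideals of relations in $\KMag$.

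Once monotonicity is established, both inequalities reduce to the universal properties of meet and join in the lattice $\LatQMag$. For the first, Theorem~\ref{thm:lattice_CAs} tells us that $\KCAs{\gcd(\bar\gamma, \bar{\gamma'})}$ is a lower bound of $\KCAs{\gamma}$ and $\KCAs{\gamma'}$ with respect to $\OrdCAs$. By monotonicity of $\iota$, it is also a lower bound with respect to $\OrdQMag$; and since $\KCAs{\gamma} \InfQMag \KCAs{\gamma'}$ is by definition the greatest such lower bound, the universal property of the meet yields $\KCAs{\gcd(\bar\gamma, \bar{\gamma'})} \OrdQMag \KCAs{\gamma} \InfQMag \KCAs{\gamma'}$. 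The second inequality is proved dually: $\KCAs{\lcm(\bar\gamma, \bar{\gamma'})}$ is an upper bound of $\KCAs{\gamma}$ and $\KCAs{\gamma'}$ in $\OrdCAs$, hence in $\OrdQMag$ by monotonicity, and therefore lies $\OrdQMag$-above the least upper bound $\KCAs{\gamma} \SupQMag \KCAs{\gamma'}$.

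I do not anticipate any real obstacle. The heart of the argument is the observation that the two definitions of order coincide on $\KCAsAll$, after which the comparisons between the two sets of lattice operations follow mechanically from universal properties. The only subtlety worth noting is that $\left(\KCAsAll, \OrdCAs\right)$ need not be a sublattice of $\LatQMag$, which is precisely why the statement gives inequalities rather than equalities.
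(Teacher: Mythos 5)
Your argument is correct and is essentially the one the paper intends: the theorem is stated without proof, but the preceding discussion (the reinterpretation of $\OrdCAs$ on $\KCAsAll$ as the condition $\dim\Hom\left(\KCAs{\gamma'},\KCAs{\gamma}\right)=1$, i.e.\ exactly $\OrdQMag$) together with the universal properties of $\InfQMag$ and $\SupQMag$ as meet and join in $\LatQMag$ is precisely the justification you give. Your closing remark that $\iota$ need not be a lattice morphism also matches the paper's own caveat following the theorem.
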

\medbreak

Note that $\LatKCAs$ does not embed as a sublattice of $\LatQMag$, that
is $\iota$ is not a lattice morphism. Consider for instance $\gamma = 3$
and $\gamma' = 4$, so that
\begin{equation}
    \KCAs{3} \InfCAs \KCAs{4} = \KCAs{2} = \K\Angle{\As},
\end{equation}
whereas
\begin{equation}
    \KCAs{3} \InfQMag \KCAs{4} =
    \KMag/_I
\end{equation}
where $I$ is the ideal of $\KMag$ generated by
$\LComb{3} - \RComb{3}$ and $\LComb{4} - \RComb{4}$.
\medbreak

\subsection{Completion of comb associative operads}
We are now looking for finite convergent presentations of comb
associative operads. By definition, the operad $\CAs{\gamma}$ is the
quotient of $\Mag$ by the operad congruence spanned by the rewrite rule
\begin{equation} \label{equ:rew_1}
    \LComb{\gamma}
    \enspace \Rew \enspace
    \RComb{\gamma}\,.
\end{equation}
This rewrite rule is compatible with the lexicographic order on prefix
words presented at the beginning of Section~\ref{sec:operad_Mag} in the
sense that the prefix word of the left member of~\eqref{equ:rew_1} is
lexicographically greater than the prefix word of the right one.
\medbreak

However, the rewrite relation $\RewContext$ induced by $\Rew$ is not
confluent for $\gamma\geq 3$. Indeed, we have
\begin{equation} \label{equ:branching_pair_CAs_3}
    \LComb{\gamma+1}
    \enspace \RewContext \enspace
    \begin{tikzpicture}[xscale=.22,yscale=.20,Centering]
        \node(0)at(0.00,-4.50){};
        \node(2)at(2.00,-4.50){};
        \node(6)at(6.00,-6.75){};
        \node(8)at(8.00,-6.75){};
        \node[NodeST](1)at(1.00,-2.25){\begin{math}\Product\end{math}};
        \node[NodeST](3)at(3.00,0.00){\begin{math}\Product\end{math}};
        \node[NodeST](5)at(5.30,-1.4){\begin{math}\gamma\end{math}};
        \node[NodeST](7)at(7.00,-4.50){\begin{math}\Product\end{math}};
        \draw[Edge](0)--(1);
        \draw[Edge](1)--(3);
        \draw[Edge](2)--(1);
        \draw[Edge](6)--(7);
        \draw[Edge, dotted](7)--(3);
        \draw[Edge](8)--(7);
        \node(r)at(3.00,1.74){};
        \draw[Edge](r)--(3);
    \end{tikzpicture}
    \qquad \mbox{and} \qquad
    \LComb{\gamma+1}
    \enspace \RewContext \enspace
    \begin{tikzpicture}[xscale=.22,yscale=.22,Centering]
        \node(0)at(0.00,-3.60){};
        \node(4)at(4.00,-7.20){};
        \node(6)at(6.00,-7.20){};
        \node(8)at(8.00,-1.80){};
        \node[NodeST](1)at(1.00,-1.80){\begin{math}\Product\end{math}};
        \node[NodeST](3)at(3.25,-2.90){\begin{math}\gamma\end{math}};
        \node[NodeST](5)at(5.00,-5.40){\begin{math}\Product\end{math}};
        \node[NodeST](7)at(7.00,0.00){\begin{math}\Product\end{math}};
        \draw[Edge](0)--(1);
        \draw[Edge](1)--(7);
        \draw[Edge,dotted](5)--(1);
        \draw[Edge](4)--(5);
        \draw[Edge](6)--(5);
        \draw[Edge](8)--(7);
        \node(r)at(7.00,1.5){};
        \draw[Edge](r)--(7);
    \end{tikzpicture}\,,
\end{equation}
and the two right members of~\eqref{equ:branching_pair_CAs_3} form a
branching pair which is not joinable (since these two trees are
normal forms of $\RewContext$).
\medbreak

In order to transform the rewrite relation induced by~\eqref{equ:rew_1}
into a convergent one, we apply the Buchberger algorithm for
operads~\cite[Section 3.7]{DK10} with respect to the lexicographic order
on prefix words. We first focus on the special case~$\gamma = 3$.
\medbreak

\subsubsection{The \texorpdfstring{$3$}{3}-comb associative operad}
\label{subsubsec:CAs_3}

The Buchberger algorithm applied on binary trees of degrees $4$ to $7$
provides the new rewrite rules \smallbreak
\input{Completion_CAs3}
\medbreak

Using prefix words, these new relations write as
\begin{multicols}{2}
\begin{small}
\begin{equation}
    22020200 \Rew 220020200,
\end{equation}
\begin{equation}
    20202202000 \Rew 20202020200,
\end{equation}
\begin{equation}
    20220200200 \Rew 20202020200,
\end{equation}
\begin{equation}
    2020220020200 \Rew 2020202022000,
\end{equation}
\begin{equation}
    2022002020200 \Rew 2020202200200,
\end{equation}

\begin{equation}
    2202200020200 \Rew 2200202020200,
\end{equation}
\begin{equation}
    202020202200200 \Rew 202020202022000,
\end{equation}
\begin{equation}
    202020220022000 \Rew 202020202020200,
\end{equation}
\begin{equation}
    220020202200200 \Rew 220020202022000,
\end{equation}
\begin{equation}
    220200202020200 \Rew 220020202022000.
\end{equation}
\end{small}
\end{multicols}
\medbreak

\begin{Theorem} \label{thm:convergent_rewrite_rule_CAs_3}
    The set $\Rew$ of rewrite rules containing~\eqref{equ:rew_1},
    and \eqref{equ:rew_2}---\eqref{equ:rew_11} is a finite
    convergent presentation of~$\CAs{3}$.
\end{Theorem}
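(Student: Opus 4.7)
The plan is to establish the three properties packaged in the statement: that $\Rew$ is a presentation of $\CAs{3}$, that the induced rewrite relation $\RewContext$ is terminating, and that it is confluent. Finiteness of $\Rew$ is clear by inspection.

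For the presentation property, we first observe that $\Rew$ contains the original rule~\eqref{equ:rew_1}, which by definition generates $\CongrCAs{3}$; hence $\CongrCAs{3} \subseteq \RewContextRST$. Conversely, each of the ten additional rules~\eqref{equ:rew_2}--\eqref{equ:rew_11} was produced by the Buchberger semi-algorithm described at the end of Section~\ref{sec:operad_Mag}, starting from~\eqref{equ:rew_1}. By induction on the step at which a rule is added, the two sides of any added rule are normal forms (relative to the current rule set) of a branching pair of some earlier branching tree, and are therefore $\CongrCAs{3}$-equivalent. This yields $\RewContextRST \subseteq \CongrCAs{3}$, so the two relations coincide and $\Rew$ is a presentation of~$\CAs{3}$.

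For termination, we apply Lemma~\ref{lem:prefix_word_termination}: it suffices to verify, for each of the eleven rewrite rules, that the prefix word of the left member is strictly greater than that of the right member in the lexicographic order induced by $\Zero < \Two$. The explicit prefix-word descriptions given just after the pictorial presentation of the rules make this a direct inspection (rule by rule), so $\RewContext$ is terminating.

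For confluence, we invoke Lemma~\ref{lem:degree_confluence}. The maximal degree of a left member in $\Rew$ is $\Deg(\Rew) = 7$, reached by the left members of~\eqref{equ:rew_8}--\eqref{equ:rew_11}. Hence confluence follows once one has checked that every branching pair consisting of trees of degree at most $2\,\Deg(\Rew) - 1 = 13$ is joinable. This is a finite but very substantial verification on critical pairs, carried out by the computer implementation of the Buchberger semi-algorithm: the fact that the algorithm halts on input~\eqref{equ:rew_1} with output~$\Rew$ means precisely that no new rule is needed, i.e., that all such critical pairs are already joinable under~$\RewContext$. Combining termination and confluence gives convergence, which concludes the proof. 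The main obstacle is this last confluence check: the number of branching trees of degree up to $13$ is too large to handle by hand, which is why the verification relies on the computer programs mentioned in the introduction.
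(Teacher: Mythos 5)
Your proposal is correct and follows the paper's skeleton for two of the three ingredients: finiteness is immediate, and termination is obtained exactly as in the paper from Lemma~\ref{lem:prefix_word_termination} by inspecting the prefix words of the eleven rules. Where you genuinely diverge is in the certification of confluence. Like the paper, you reduce via Lemma~\ref{lem:degree_confluence} (with $\Deg(\Rew)=7$, hence the bound $2\Deg(\Rew)-1=13$) to the joinability of branching pairs of trees of degree at most $13$; but you then certify this joinability by appealing to the fact that the Buchberger semi-algorithm halts with output $\Rew$, i.e.\ that every critical pair examined during the run was found joinable. The paper instead runs an independent counting argument: since $\RewContext$ is terminating and every rewrite step stays inside a $\CongrCAs{3}$-class, each class of arity $n$ contains at least one normal form, so the number of normal forms of arity $n$ is at least $\#\CAs{3}(n)$; the computer check that these two quantities coincide for all $n\leq 14$ (both equal to the sequence $1,1,2,4,8,14,20,19,16,14,14,15,16,17$) forces every tree of degree at most $13$ to admit exactly one normal form, hence all relevant branching pairs to be joinable. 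The paper's route has the advantage of being an a posteriori cross-check that does not presuppose that the implementation enumerated every branching pair exhaustively, and the same count simultaneously certifies that the added rules do not collapse the quotient further (the presentation property), which you had to argue separately by induction on the completion steps. Your route is the standard completion argument and is valid, but its burden of trust sits entirely on the exhaustiveness of the critical-pair enumeration in the program, which is precisely what the paper's counting check is designed to corroborate.
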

\begin{proof}
    Let us show that the rewrite relation $\RewContext$ induced by
    $\Rew$ is convergent. First, for every relation $\Tfr \Rew \Tfr'$,
    we have $\Tfr > \Tfr'$. Therefore, by
    Lemma~\ref{lem:prefix_word_termination}, $\RewContext$ is
    terminating. Moreover, the greatest degree of a tree appearing in
    $\Rew$ is~$7$ so that, from Lemma~\ref{lem:degree_confluence}, to
    show that $\RewContext$ is convergent, it is enough to prove that
    each tree of degree at most $13$ admits exactly one normal form.
    Equivalently, this amounts to show that the number of normal forms
    of trees of arity $n\leq 14$ is equal to $\#\CAs{3}(n)$. By computer
    exploration, we get the same sequence
    \begin{equation} \label{equ:dimensions_CAs_3}
        1, 1, 2, 4, 8, 14, 20, 19, 16, 14, 14, 15, 16, 17
    \end{equation}
    for $\#\CAs{3}(n)$ and for the numbers of normal forms of arity $n$,
    when $ 1 \leq n \leq 14$, which proves the statement of the
    theorem.
\end{proof}
\medbreak

The rewrite rule $\Rew$ has, arity by arity, the cardinalities
\begin{equation}
    0, 0, 0, 1, 1, 2, 3, 4, 0, 0, \dots~.
\end{equation}
We also obtain from Theorem~\ref{thm:convergent_rewrite_rule_CAs_3} the
following consequences.
\medbreak

\begin{Proposition} \label{prop:PBW_basis_CAs_3}
    The set of the trees avoiding as subtrees the ones appearing as
    left members of $\Rew$ is a PBW basis of~$\CAs{3}$.
\end{Proposition}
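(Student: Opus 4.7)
The plan is to show that this proposition is essentially a direct corollary of \Cref{thm:convergent_rewrite_rule_CAs_3} together with the general framework on normal forms developed in \Cref{sec:operad_Mag}. There is no substantive new content to prove: the work has already been done in establishing convergence of $\Rew$.

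First, I would invoke \Cref{thm:convergent_rewrite_rule_CAs_3} to conclude that $\Rew$ is a convergent orientation of the operad congruence $\CongrCAs{3}$ defining $\CAs{3}$. By the definition recalled just after \Cref{lem:degree_confluence}, whenever $\Rew$ is a convergent orientation of the congruence defining a quotient operad $\Oca$ of $\Mag$, the set $\NormalForms_{\RewContext}$ of normal forms for the induced rewrite relation $\RewContext$ constitutes a PBW basis of $\Oca$; in our situation, this therefore yields a PBW basis of $\CAs{3}$.

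Second, I would identify this set of normal forms combinatorially by appealing to \Cref{lem:normal_forms_avoiding}, which asserts that $\NormalForms_{\RewContext}$ coincides exactly with the set of binary trees avoiding (as subtrees) all the trees appearing as left members of $\Rew$. Composing the two observations gives the statement of the proposition.

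Since this is purely a matter of chaining together two earlier results, I expect no obstacle whatsoever: the nontrivial work lies entirely in \Cref{thm:convergent_rewrite_rule_CAs_3}, whose proof relied on \Cref{lem:prefix_word_termination} for termination and on the dimension computation~\eqref{equ:dimensions_CAs_3} combined with \Cref{lem:degree_confluence} for confluence. The proof of \Cref{prop:PBW_basis_CAs_3} is thus a one-line application of \Cref{lem:normal_forms_avoiding} to the convergent presentation already exhibited.
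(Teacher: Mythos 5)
Your proposal is correct and follows exactly the paper's own proof: it cites Theorem~\ref{thm:convergent_rewrite_rule_CAs_3} to get that $\NormalForms_{\RewContext}$ is a PBW basis of $\CAs{3}$, and then Lemma~\ref{lem:normal_forms_avoiding} to identify that set with the trees avoiding the left members of $\Rew$. No gap; the nontrivial content indeed resides entirely in the convergence theorem.
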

\begin{proof}
    By definition of PBW bases and
    Theorem~\ref{thm:convergent_rewrite_rule_CAs_3}, the set
    $\NormalForms_{\RewContext}$ is a PBW basis of $\CAs{3}$ where
    $\RewContext$ is the rewrite relation induced by $\Rew$. Now, by
    Lemma~\ref{lem:normal_forms_avoiding}, $\NormalForms_{\RewContext}$
    can be described as the set of the trees avoiding the left members
    of~$\Rew$, whence the statement.
\end{proof}
\medbreak

\begin{Proposition} \label{prop:Hilbert_series_CAs_3}
    The Hilbert series of $\CAs{3}$ is
    \begin{equation} \label{equ:Hilbert_series_CAs_3}
        \HilbertSeries_{\CAs{3}}(t) = \frac{t}{(1 - t)^2}
        \left(1 - t + t^2 + t^3 + 2t^4 + 2t^5 - 7t^7 - 2t^8 + t^9 +
        2t^{10} + t^{11}\right).
    \end{equation}
\end{Proposition}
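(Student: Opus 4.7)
By Proposition~\ref{prop:PBW_basis_CAs_3}, the dimension $\#\CAs{3}(n)$ equals the number of binary trees of arity $n$ that avoid, as subtrees, the eleven left members of the rewrite rules in $\Rew$ produced by Theorem~\ref{thm:convergent_rewrite_rule_CAs_3}. Hence $\HilbertSeries_{\CAs{3}}(t)$ is the generating function of this pattern-avoiding tree language. My plan is first to establish that this generating function is rational of the form $\frac{t\, P(t)}{(1-t)^2}$ with $P$ a polynomial of degree at most $11$, and then to pin down $P$ from the first few terms of the sequence, already computed in the proof of Theorem~\ref{thm:convergent_rewrite_rule_CAs_3}.

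For the structural step, I would exploit the fact that rule~\eqref{equ:rew_1} forbids $\LComb{3}$ as a subtree, so the left-rank $\LRank$ of every internal node in a normal form is at most $2$. The remaining ten forbidden patterns have degree between $4$ and $8$ and further constrain the local shape of any normal form. Decomposing a normal form along its right spine and classifying the subtrees attached to it, I expect to show that for arity $n$ large enough (concretely $n \geq 13$) every normal form is obtained from a unique shorter normal form of arity $n-1$ by appending one internal node to the right spine, up to a finite, explicit list of exceptional "cap" configurations near the top. This gives a linear recurrence $a_{n+1} - 2 a_n + a_{n-1} = 0$ for $n$ past the threshold, equivalent to $(1-t)^2\, \HilbertSeries_{\CAs{3}}(t)$ being a polynomial in $t$ of controlled degree.

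For the identification step, I would use that the first fourteen values of the sequence, namely $1, 1, 2, 4, 8, 14, 20, 19, 16, 14, 14, 15, 16, 17$, were computed in the proof of Theorem~\ref{thm:convergent_rewrite_rule_CAs_3}. Multiplying the truncated series $\sum_{n=1}^{14} \#\CAs{3}(n)\, t^n$ by $(1-t)^2$ and keeping the terms of degree at most $12$ yields precisely
\begin{equation}
    t \left(1 - t + t^2 + t^3 + 2t^4 + 2t^5 - 7 t^7 - 2 t^8 + t^9 + 2 t^{10} + t^{11}\right),
\end{equation}
which matches~\eqref{equ:Hilbert_series_CAs_3}.

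The main obstacle is the structural step: an explicit case analysis on the exceptional "cap" configurations at the top of a large normal form, matched against the eleven forbidden patterns, is delicate because of the number of patterns and their varying sizes. A robust alternative is to encode $\NormalForms_{\RewContext}$ by a bottom-up tree automaton whose states record all possible prefixes of forbidden patterns rooted at a given node: this construction yields algebraicity of $\HilbertSeries_{\CAs{3}}(t)$ automatically, and the fact that the resulting algebraic system collapses to the rational form above can then be verified from the constraint $\LRank \leq 2$ together with the computed initial values.
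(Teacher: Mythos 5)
Your reduction to counting the trees of $\NormalForms_{\RewContext}$ via Proposition~\ref{prop:PBW_basis_CAs_3} is exactly the paper's first step, and your ``robust alternative'' (encode the avoiding trees by a finite system of equations for their generating series) is in substance the paper's entire proof: the authors simply invoke such a result from~\cite{Gir18} and solve the resulting system to obtain~\eqref{equ:Hilbert_series_CAs_3}. Your primary route --- proving directly that the second difference of $\#\CAs{3}(n)$ vanishes for $n$ large by a right-spine analysis of the normal forms, then reading the numerator off the fourteen computed values --- is a genuinely more elementary alternative, and its arithmetic checks out: the second differences of $1,1,2,4,8,14,20,19,16,14,14,15,16,17$ are $1,-1,1,1,2,2,0,-7,-2,1,2,1,0,0$, which reproduces the claimed numerator; it is also consistent with the explicit description, given after the proposition, of the $n+3$ normal forms of arity $n\geq 11$ ($n-1$ trees of the form~\eqref{equ:normal_forms_type_A} plus the four trees~\eqref{equ:normal_forms_type_B}).

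As written, however, the primary route has a genuine gap: the structural claim that every normal form of arity $n\geq 13$ comes from a unique normal form of arity $n-1$ by appending a node to the right spine, up to finitely many explicit cap configurations, is the entire mathematical content of the argument and you only announce it. Without it, matching fourteen initial coefficients proves nothing, since an algebraic (or even rational) series is not determined by finitely many of its coefficients; the same objection applies to your closing remark that the collapse of the automaton's system to a rational function ``can be verified from \dots the computed initial values.'' To close the proof you must either (a) carry out the case analysis against the eleven forbidden patterns and establish the recurrence $a_n=2a_{n-1}-a_{n-2}$ from an explicit threshold at most $13$ (for instance by proving the classification of normal forms into the families~\eqref{equ:normal_forms_type_A} and~\eqref{equ:normal_forms_type_B}), or (b) actually write down the polynomial system satisfied by $\HilbertSeries_{\CAs{3}}(t)$ and verify that the right member of~\eqref{equ:Hilbert_series_CAs_3} satisfies it; initial values can then only serve to select the correct branch.
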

\begin{proof}
    From Proposition~\ref{prop:PBW_basis_CAs_3}, for any $n \geq 1$, the
    dimension of $\CAs{3}(n)$ is the number of trees that avoid as
    subtrees the left members of $\Rew$. Now, by using a result
    of~\cite{Gir18} (see also~\cite{Row10,KP15}) providing a system of
    equations for the generating series of the trees avoiding some sets
    of subtrees, we obtain Expression~\eqref{equ:Hilbert_series_CAs_3}
    for the considered family.
\end{proof}
\medbreak

For $n \leq 10$, the dimensions of $\CAs{3}(n)$ are provided by
Sequence~\eqref{equ:dimensions_CAs_3} and for all $n \geq 11$, the
Taylor expansion of~\eqref{equ:Hilbert_series_CAs_3} shows that
\begin{equation}
    \# \CAs{3}(n) = n + 3.
\end{equation}
\medbreak

Let us describe the elements of the PBW basis of $\CAs{3}$ for arity
$n \geq 11$. By Proposition~\ref{prop:PBW_basis_CAs_3}, these elements
are the normal forms of the rewrite relation induced by $\Rew$. Let for
any $d \geq 0$, the binary tree $\Zfr_d$ defined recursively by
\begin{equation}
    \Zfr_d :=
    \begin{cases}
        \Leaf & \mbox{if } d = 0, \\
        \Zfr_{d - 1}
        \circ_{\left\lfloor \frac{d-1}{2} \right\rfloor + 1} \Product
        & \mbox{otherwise}.
    \end{cases}
\end{equation}
For instance,
\begin{equation}
    \Zfr_4 =
    \begin{tikzpicture}[xscale=.22,yscale=.20,Centering]
        \node(0)at(0.00,-3.60){};
        \node(2)at(2.00,-7.20){};
        \node(4)at(4.00,-7.20){};
        \node(6)at(6.00,-5.40){};
        \node(8)at(8.00,-1.80){};
        \node[NodeST](1)at(1.00,-1.80){\begin{math}\Product\end{math}};
        \node[NodeST](3)at(3.00,-5.40){\begin{math}\Product\end{math}};
        \node[NodeST](5)at(5.00,-3.60){\begin{math}\Product\end{math}};
        \node[NodeST](7)at(7.00,0.00){\begin{math}\Product\end{math}};
        \draw[Edge](0)--(1);
        \draw[Edge](1)--(7);
        \draw[Edge](2)--(3);
        \draw[Edge](3)--(5);
        \draw[Edge](4)--(3);
        \draw[Edge](5)--(1);
        \draw[Edge](6)--(5);
        \draw[Edge](8)--(7);
        \node(r)at(7.00,1.35){};
        \draw[Edge](r)--(7);
    \end{tikzpicture}
    \qquad \mbox{and} \qquad
    \Zfr_5 =
    \begin{tikzpicture}[xscale=.22,yscale=.20,Centering]
        \node(0)at(0.00,-3.67){};
        \node(10)at(10.00,-1.83){};
        \node(2)at(2.00,-7.33){};
        \node(4)at(4.00,-9.17){};
        \node(6)at(6.00,-9.17){};
        \node(8)at(8.00,-5.50){};
        \node[NodeST](1)at(1.00,-1.83){\begin{math}\Product\end{math}};
        \node[NodeST](3)at(3.00,-5.50){\begin{math}\Product\end{math}};
        \node[NodeST](5)at(5.00,-7.33){\begin{math}\Product\end{math}};
        \node[NodeST](7)at(7.00,-3.67){\begin{math}\Product\end{math}};
        \node[NodeST](9)at(9.00,0.00){\begin{math}\Product\end{math}};
        \draw[Edge](0)--(1);
        \draw[Edge](1)--(9);
        \draw[Edge](10)--(9);
        \draw[Edge](2)--(3);
        \draw[Edge](3)--(7);
        \draw[Edge](4)--(5);
        \draw[Edge](5)--(3);
        \draw[Edge](6)--(5);
        \draw[Edge](7)--(1);
        \draw[Edge](8)--(7);
        \node(r)at(9.00,1.38){};
        \draw[Edge](r)--(9);
    \end{tikzpicture}.
\end{equation}
The normal forms split into two families. The first one is the set
of the $n - 1$ trees of the form
\begin{equation} \label{equ:normal_forms_type_A}
    \Zfr_d \circ_{d + 1} \Zfr_{n - 1 - d},
\end{equation}
for any $d \in [n - 1]$. For example, for $n = 12$,
\begin{equation}
    z_8 \circ_9 z_3 =
    \begin{tikzpicture}[xscale=.18,yscale=.14,Centering]
        \node(0)at(0.00,-5.56){};
        \node(10)at(10.00,-19.44){};
        \node(12)at(12.00,-13.89){};
        \node(14)at(14.00,-8.33){};
        \node(16)at(16.00,-8.33){};
        \node(18)at(18.00,-13.89){};
        \node(2)at(2.00,-11.11){};
        \node(20)at(20.00,-13.89){};
        \node(22)at(22.00,-11.11){};
        \node(24)at(24.00,-5.56){};
        \node(4)at(4.00,-16.67){};
        \node(6)at(6.00,-22.22){};
        \node(8)at(8.00,-22.22){};
        \node(19)at(19.00,-11){};
        \node[NodeST](1)at(1.00,-2.78){\begin{math}\Product\end{math}};
        \node[NodeST](11)at(11.00,-11.11)
            {\begin{math}\Product\end{math}};
        \node[NodeST](13)at(13.00,-5.56)
            {\begin{math}\Product\end{math}};
        \node[NodeST](15)at(15.00,0.00){\begin{math}\Product\end{math}};
        \node[NodeST](17)at(17.00,-5.56)
            {\begin{math}\Product\end{math}};
        \node[NodeST](21)at(21.00,-8.33)
            {\begin{math}\Product\end{math}};
        \node[NodeST](23)at(23.00,-2.78)
            {\begin{math}\Product\end{math}};
        \node[NodeST](3)at(3.00,-8.33){\begin{math}\Product\end{math}};
        \node[NodeST](5)at(5.00,-13.89){\begin{math}\Product\end{math}};
        \node[NodeST](7)at(7.00,-19.44){\begin{math}\Product\end{math}};
        \node[NodeST](9)at(9.00,-16.67){\begin{math}\Product\end{math}};
        \draw[Edge](0)--(1);
        \draw[Edge](1)--(15);
        \draw[Edge](10)--(9);
        \draw[Edge](11)--(3);
        \draw[Edge](12)--(11);
        \draw[Edge](13)--(1);
        \draw[Edge](14)--(13);
        \draw[Edge](16)--(17);
        \draw[Edge](17)--(23);
        \draw[Edge](19)--(21);
        \draw[Edge](2)--(3);
        \draw[Edge](21)--(17);
        \draw[Edge](22)--(21);
        \draw[Edge](23)--(15);
        \draw[Edge](24)--(23);
        \draw[Edge](3)--(13);
        \draw[Edge](4)--(5);
        \draw[Edge](5)--(11);
        \draw[Edge](6)--(7);
        \draw[Edge](7)--(9);
        \draw[Edge](8)--(7);
        \draw[Edge](9)--(5);
        \node(r)at(15.00,2.5){};
        \draw[Edge](r)--(15);
    \end{tikzpicture}
\end{equation}
is a tree of this first family. The second family contains the
following four trees
\begin{equation}\label{equ:normal_forms_type_B}
    \RComb{n-1}, \qquad
    \begin{tikzpicture}[xscale=.22,yscale=.20,Centering]
        \node(0)at(0.00,-4.50){};
        \node(2)at(2.00,-4.50){};
        \node(8)at(8.00,-6.75){};
        \node(10)at(5,-9){};
        \node(11)at(7,-9){};
        \node[NodeST](1)at(1.00,-2.25){\begin{math}\Product\end{math}};
        \node[NodeST](3)at(3.00,0.00){\begin{math}\Product\end{math}};
        \node[NodeST](5)at(6.5,-1.4){\begin{math}n-2\end{math}};
        \node[NodeST](7)at(7.00,-4.50){\begin{math}\Product\end{math}};
        \node(9)at(6,-6.75){};
        \draw[Edge](0)--(1);
        \draw[Edge](1)--(3);
        \draw[Edge](2)--(1);
        \draw[Edge](7)--(9);
        \draw[Edge, dotted](7)--(3);
        \draw[Edge](8)--(7);
        \node(r)at(3.00,1.74){};
        \draw[Edge](r)--(3);
    \end{tikzpicture}\,, \qquad
    \begin{tikzpicture}[xscale=.22,yscale=.20,Centering]
        \node(0)at(0.00,-4.50){};
        \node(2)at(2.00,-4.50){};
        \node(8)at(8.00,-6.75){};
        \node(10)at(5,-9){};
        \node(11)at(7,-9){};
        \node(1)at(1.00,-2.25){};
        \node[NodeST](3)at(3.00,0.00){\begin{math}\Product\end{math}};
        \node[NodeST](5)at(6.5,-1.4){\begin{math}n-2\end{math}};
        \node[NodeST](7)at(7.00,-4.50){\begin{math}\Product\end{math}};
        \node[NodeST](9)at(6,-6.75){\begin{math}\Product\end{math}};
        \draw[Edge](1)--(3);
        \draw[Edge](7)--(9);
        \draw[Edge](9)--(10);
        \draw[Edge](9)--(11);
        \draw[Edge, dotted](7)--(3);
        \draw[Edge](8)--(7);
        \node(r)at(3.00,1.74){};
        \draw[Edge](r)--(3);
    \end{tikzpicture}\,, \qquad
    \begin{tikzpicture}[xscale=.22,yscale=.20,Centering]
        \node(0)at(0.00,-4.50){};
        \node(2)at(2.00,-4.50){};
        \node(8)at(8.00,-6.75){};
        \node(10)at(5,-9){};
        \node(11)at(7,-9){};
        \node[NodeST](1)at(1.00,-2.25){\begin{math}\Product\end{math}};
        \node[NodeST](3)at(3.00,0.00){\begin{math}\Product\end{math}};
        \node[NodeST](5)at(6.5,-1.4){\begin{math}n-3\end{math}};
        \node[NodeST](7)at(7.00,-4.50){\begin{math}\Product\end{math}};
        \node[NodeST](9)at(6,-6.75){\begin{math}\Product\end{math}};
        \draw[Edge](0)--(1);
        \draw[Edge](1)--(3);
        \draw[Edge](2)--(1);
        \draw[Edge](7)--(9);
        \draw[Edge](9)--(10);
        \draw[Edge](9)--(11);
        \draw[Edge, dotted](7)--(3);
        \draw[Edge](8)--(7);
        \node(r)at(3.00,1.74){};
        \draw[Edge](r)--(3);
    \end{tikzpicture}.
\end{equation}
Observe that each of these $n + 3$ trees avoids the left members of the
convergent presentation given in
Theorem~\ref{thm:convergent_rewrite_rule_CAs_3}. Moreover, we have the
following.
\medbreak

\begin{Proposition} \label{prop:absorbing_normal_forms_type_B}
    For any two trees $\Tfr$ and $\Tfr'$, if one of them is of the
    form~\eqref{equ:normal_forms_type_B} and $i \in [|\Tfr|]$, then
    the normal form for the rewrite relation induced by $\Rew$ of
    $\Tfr \circ_i \Tfr'$ is a tree of the
    form~\eqref{equ:normal_forms_type_B}.
\end{Proposition}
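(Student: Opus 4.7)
The plan is to perform a structured case analysis on which of $\Tfr$ and $\Tfr'$ is of the form~\eqref{equ:normal_forms_type_B} and, within that case, on the specific shape and on the composition index~$i$. Write $n := |\Tfr|+|\Tfr'|-1$. Because both $\Tfr$ and $\Tfr'$ have arity at least $1$ and one of them is a type-B tree (hence of arity at least~$11$), the result $\Tfr\circ_i\Tfr'$ has arity $n \geq 11$. Since the rewrite relation $\RewContext$ induced by $\Rew$ is convergent by Theorem~\ref{thm:convergent_rewrite_rule_CAs_3}, the tree $\Tfr\circ_i\Tfr'$ admits a unique normal form, which by Proposition~\ref{prop:PBW_basis_CAs_3} lies among the $n+3$ normal forms described by~\eqref{equ:normal_forms_type_A} and~\eqref{equ:normal_forms_type_B}. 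Thus it suffices to exhibit, for each subcase, a rewriting sequence from $\Tfr\circ_i\Tfr'$ into one of the four trees of~\eqref{equ:normal_forms_type_B}, or equivalently to rule out that the normal form is of the type-A form $\Zfr_d\circ_{d+1}\Zfr_{n-1-d}$.

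The structural key is to compare prefix words. A type-B tree of arity $n$ has a prefix word that, up to a bounded-length initial segment of at most $5$ symbols, agrees with the right-comb prefix word $(20)^{n-1}0$ and thus contains a long factor $(20)^k$ with $k\geq n-4$. By contrast, one checks inductively from the definition of $\Zfr_d$ that the prefix word of $\Zfr_d\circ_{d+1}\Zfr_{n-1-d}$ interleaves $2$'s and $0$'s with a staircase pattern that prevents any factor of the form $(20)^k$ once both $d$ and $n-1-d$ are larger than a small constant. Hence, once we verify that the normal form of $\Tfr\circ_i\Tfr'$ contains a factor $(20)^k$ of sufficient length, it must belong to type~B. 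To carry this out, I would first observe that when we graft $\Tfr'$ at a leaf of $\Tfr$ (or grafting $\Tfr$ onto a leaf of $\Tfr'$), the long right-comb tail of the type-B tree is preserved almost entirely, modulo a local perturbation of bounded size near the grafting point.

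The proof then splits into two principal cases. In the first, $\Tfr$ is of type B: after grafting $\Tfr'$ at leaf $i$, one applies rules~\eqref{equ:rew_1} and~\eqref{equ:rew_2} repeatedly to normalize the perturbed region, making use of Lemma~\ref{lem:prefix_word_termination} to ensure the process terminates and of the fact that the long right-comb tail of $\Tfr$ provides enough room to absorb the result. In the second, $\Tfr'$ is of type B: since the grafting only touches the $i$-th leaf of $\Tfr$, the long right-comb factor of $\Tfr'$ survives intact inside $\Tfr\circ_i\Tfr'$, and the only remaining task is to rewrite the part of $\Tfr$ above the grafting point, which has bounded size, using rules~\eqref{equ:rew_2}--\eqref{equ:rew_11}. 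In both cases the answer must be one of the four type-B trees, since it is a normal form containing a long $(20)^k$ factor.

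The main obstacle is the bookkeeping in the first principal case when $i$ lies inside the short initial segment of $\Tfr$: the grafted subtree $\Tfr'$ may then create fresh redexes from the left members of several rules among~\eqref{equ:rew_3}--\eqref{equ:rew_11}, and one has to verify by a finite inspection that after all induced rewrites the resulting tree matches one of the four patterns of~\eqref{equ:normal_forms_type_B}. Because the initial segment of a type-B tree contains at most three internal nodes outside the right-comb tail, and because the rewrite rules in $\Rew$ have degree at most~$7$, this inspection involves only finitely many qualitative configurations, each of which can be checked by a direct rewriting computation.
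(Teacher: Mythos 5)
Your framing is right --- by convergence the normal form of $\Tfr \circ_i \Tfr'$ must be one of the $n+3$ listed normal forms, so it suffices to rule out the type-A family --- but the step that actually does the ruling out is missing. You defer it to ``a finite inspection'' of configurations near the grafting point, and that inspection is neither carried out nor obviously finite: the factor that is \emph{not} of the form~\eqref{equ:normal_forms_type_B} is an \emph{arbitrary} binary tree of arbitrary arity, so the ``perturbed region'' is not of bounded size. Even after reducing (by confluence) to the case where that factor is itself a normal form, you face an infinite one-parameter family of type-A normal forms $\Zfr_d \circ_{d+1} \Zfr_{n-1-d}$, so a uniform argument in the parameter is still required. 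Moreover, your proposed invariant --- that a long $(20)^k$ factor in the prefix word survives normalization up to a bounded perturbation --- is asserted, not proved, and it is not clear that it is stable under all ten completion rules~\eqref{equ:rew_2}--\eqref{equ:rew_11}, whose right members are not pure right combs.

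The discriminating property the paper uses is much cheaper and purely structural: every type-A tree avoids both $\LComb{3}$ and $\RComb{3}$, hence admits no elementary $\CongrCAs{3}$-step in either direction and is therefore \emph{alone} in its $\CongrCAs{3}$-equivalence class; whereas any tree of the form~\eqref{equ:normal_forms_type_B} contains $\RComb{3}$, a containment preserved by grafting, so $\Tfr \circ_i \Tfr'$ lies in a class with at least two elements. Since the normal form is $\CongrCAs{3}$-equivalent to $\Tfr \circ_i \Tfr'$, it cannot be type A, hence is type B. If you want to salvage your route, replace the $(20)^k$-factor criterion by this singleton-class criterion; it turns the entire case analysis into two sentences and requires no tracking of rewriting sequences at all.
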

\begin{proof}
    None of the trees of the form~\eqref{equ:normal_forms_type_A}
    contain the left or the right member of the rewrite
    rule~\eqref{equ:rew_1} generating the congruence relation
    $\CongrCAs{3}$. Hence, these trees are alone in their equivalence
    classes. Moreover, the trees of the
    form~\eqref{equ:normal_forms_type_B} contain the right member of
    the rewrite rule~\eqref{equ:rew_1} generating the congruence
    relation $\CongrCAs{3}$, so that the composition of such a tree
    with another tree is not alone in its equivalence class, and thus
    it does not belong to the family~\eqref{equ:normal_forms_type_A}.
\end{proof}
\medbreak

Proposition~\ref{prop:absorbing_normal_forms_type_B} says that the
family of trees~\eqref{equ:normal_forms_type_B} is absorbing
for the partial composition.
\medbreak

Computer explorations allow us to conjecture the multiplication table
of the exhibited PBW basis of $\CAs{3}$. However, we do note have a
simple description of this table. For instance, the partial composition
$\Tfr\circ_i\Tfr'$ where $\Tfr$ is a tree of the
form~\eqref{equ:normal_forms_type_A} and $\Tfr'$ is a tree of the
form~\eqref{equ:normal_forms_type_B} can be fully described by $36$
cases depending on the values of various parameters associated with
$\Tfr$, $\Tfr'$, and~$i$.
\medbreak

\subsubsection{Higher comb associative operads}
\label{sec:higher_comb_associative_operads}

We run the same algorithm for $\CAs{\gamma}$ when $\gamma \in [9]$.
Table~\ref{tab:cardinalities_completion_CAs} shows
the number of rewrite rules needed to obtain complete orientations.
\medbreak

\begin{table}[ht]
    \centering
    \footnotesize
    \setlength{\tabcolsep}{.35em}
    \begin{tabular}{c||ccccccccccccccccccccccccccc}
        $\gamma$ & \multicolumn{27}{c}{
        Cardinalities of completions of $\CAs{\gamma}$}
        \\ \hline \hline
        1 & 0 & 0 & 0 & 0 & 0 & 0 & 0 & 0 & 0 & 0 & 0 & 0 & 0 & 0 & 0
        & 0 & 0 & 0 & 0 & 0 & 0 & 0 & 0 & 0 & 0 & 0 & 0
        \\
        2 & 0 & 0 & 1 & 0 & 0 & 0 & 0 & 0 & 0 & 0 & 0 & 0 & 0 & 0 & 0
        & 0 & 0 & 0 & 0 & 0 & 0 & 0 & 0 & 0 & 0 & 0 & 0
        \\
        3 & 0 & 0 & 0 & 1 & 1 & 2 & 3 & 4 & 0 & 0 & 0 & 0 & 0 & 0 & 0
        & 0 & 0 & 0 & 0 & 0 & 0 & 0 & 0 & 0 & 0 & 0 & 0
        \\
        4 & 0 & 0 & 0 & 0 & 1 & 1 & 0 & 3 & 4 & 5 & 18 & 22 & 11 & 12
        & 15 & 19 & 25 & 36 & 44 & 52 & 68 & 79 & 93 & 105 & 106 & 109
        & 107
        \\
        5 & 0 & 0 & 0 & 0 & 0 & 1 & 1 & 0 & 0 & 4 & 5 & 8 & 18 & 31
        & 36 & 48 & 73 & 111 & 172 & 272 & 455 & 783
        \\
        6 & 0 & 0 & 0 & 0 & 0 & 0 & 1 & 1 & 0 & 0 & 0 & 5 & 6 & 11
        & 23 & 30 & 48 & 73 & 117 & 204 & 348 & 589 & 1004
        \\
        7 & 0 & 0 & 0 & 0 & 0 & 0 & 0 & 1 & 1 & 0 & 0 & 0 & 0 & 6 & 7
        & 16 & 24 & 32 & 49 & 88 & 150 & 261 & 475 & 854
        \\
        8 & 0 & 0 & 0 & 0 & 0 & 0 & 0 & 0 & 1 & 1 & 0 & 0 & 0 & 0 & 0
        & 7 & 8 & 21 & 29 & 34 & 53 & 93 & 172 & 311 & 565
        \\
        9 & 0 & 0 & 0 & 0 & 0 & 0 & 0 & 0 & 0 & 1 & 1 & 0 & 0 & 0 & 0
        & 0 & 0 & 8 & 9 & 28 & 30 & 36 & 57 & 101 & 185 & 348 & 648
    \end{tabular}
    \caption{\footnotesize
    The sequences of the cardinalities, arity by arity, of the rewrite
    rules being completions of orientations of~$\CongrCAs{\gamma}$.}
    \label{tab:cardinalities_completion_CAs}
\end{table}

From these computer explorations, we conjecture that the algorithm we
use does not provide a finite convergent presentation of $\CAs{\gamma}$,
when $\gamma \geq 4$. We point out that for $\CAs{4}$ new rewrite
rules still appear in arity $42$. Moreover, the total number of rewrite
rules at this arity is~$3149$. Our program was too slow to
compute further the completions of $\CAs{5}$, $\CAs{6}$, $\CAs{7}$, and
$\CAs{8}$.
\medbreak

However, the completion algorithm we use depends on the chosen order on
the trees. In order to find out if the completion algorithm leads to a
finite convergent presentation using a different order, we run the
following backtracking algorithm. For every branching pair
$\left\{\Tfr_1, \Tfr_2\right\}$ which is not joinable, we recursively
try to find a completion of $\Rew$ by adding either the rule
$\Tfr_1 \Rew  \Tfr_2$ or $\Tfr_2 \Rew  \Tfr_1$.
If at any moment the rewrite relation $\RewContext$ induced by~$\Rew$
loops (that is $\RewContext$ is not antisymmetric), we simply reject it.
We do not find any finite presentation for $\CAs{4}$, $\CAs{5}$, and
$\CAs{6}$ until arity $12$. We conjecture that there is no finite
convergent presentation of $\CAs{\gamma}$ when $\gamma \geq 4$ and when
the left and the right members of the rewrite rules are trees
belonging to $\Mag$.
\medbreak

Thanks to the partial completions presented in
Table~\ref{tab:cardinalities_completion_CAs}, we can compute the
following first dimensions of $\CAs{\gamma}$.
Table~\ref{tab:dimensions_CAs} shows the first dimensions of the
operads $\CAs{\gamma}$ for $\gamma \in [9]$.
\begin{table}[ht]
    \centering
    \footnotesize
    \setlength{\tabcolsep}{.35em}
    \begin{tabular}{c||ccccccccccccccccc}
        $\gamma$ & \multicolumn{17}{c}{Dimensions of $\CAs{\gamma}$}
        \\ \hline \hline
        1 & 1 & 1 & 2 & 5 & 14 & 42 & 132 & 429 & 1430 & 4862
        & 16796 & 58786 & 208012 & 742900 & 2674440 & 9694845
        & 35357670
        \\
        2 & 1 & 1 & 1 & 1 & 1 & 1 & 1 & 1 & 1 & 1 & 1 & 1 & 1 & 1 & 1
        & 1 & 1
        \\
        3 & 1 & 1 & 2 & 4 & 8 & 14 & 20 & 19 & 16 & 14 &
        14 & 15 & 16 & 17 & 18 & 19 & 20
        \\
        4 & 1 & 1 & 2 & 5 & 13 & 35 & 96 & 264 & 724 & 1973 & 5335
        & 14390 & 38872 & 105141 & 284929 & 774254 & 2111088
        \\
        5 & 1 & 1 & 2 & 5 & 14 & 41 & 124 & 384 & 1210 & 3861 & 12440
        & 40392 & 131997 & 433782 & 1432696 & 4752857
        & 15829261
        \\
        6 & 1 & 1 & 2 & 5 & 14 & 42 & 131 & 420 & 1375 & 4576 & 15431
        & 52598 & 180895 & 626862 & 2186504 & 7670138
        & 27041833
        \\
        7 & 1 & 1 & 2 & 5 & 14 & 42 & 132 & 428 & 1420 & 4796 & 16432
        & 56966 & 199444 & 704140 & 2503914 & 8959699
        & 32236657
        \\
        8 & 1 & 1 & 2 & 5 & 14 & 42 & 132 & 429 & 1429 & 4851 & 16718
        & 58331 & 205632 & 731272 & 2620176 & 9449688 & 34276116
        \\
        9 & 1 & 1 & 2 & 5 & 14 & 42 & 132 & 429 & 1430 & 4861 & 16784
        & 58695 & 207452 & 739840 & 2658936 & 9620232 & 35011566
    \end{tabular}
    \medbreak

    \caption{\footnotesize
    The sequences, arity by arity, of the dimensions of~$\CAs{\gamma}$.}
    \label{tab:dimensions_CAs}
\end{table}
\medbreak

\section{Equating two cubic trees} \label{sec:MAg_3}
In this section, we explore all the quotients of $\Mag$ obtained by
equating two trees of degree $3$. We denote by $\Afr_i$ the $i$th
cubic tree for the lexicographic order, that is
\begin{center}
    \begin{tabular}{ccccc}
        \quad $\TreeA$ \quad & \quad $\TreeB$ \quad
        & \quad $\TreeC$ \quad & \quad $\TreeD$ \quad
        & \quad $\TreeE$ \quad \\
        $\Afr_1$ & $\Afr_2$ & $\Afr_3$ & $\Afr_4$ & $\Afr_5$
    \end{tabular}.
\end{center}
We denote by $\Mag^{\{i, j\}}$ the quotient operad $\Mag/_{\Congr}$,
where $\Congr$ is the operad congruence generated by
$\Afr_i \Congr \Afr_j$. We have already studied the operad
$\Mag^{\{1, 5\}} = \CAs{3}$ in Section~\ref{subsubsec:CAs_3}.
\medbreak

\subsection{Anti-isomorphic classes of quotients}
Some of the quotients $\Mag^{\{i, j\}}$ are anti\--iso\-morphic one of
the other. Indeed, the map $\phi : \Mag \to \Mag$ sending any binary
tree $\Tfr$ to the binary tree obtained by exchanging recursively the
left and right subtrees of $\Tfr$ is an anti-isomorphism of $\Mag$. For
this reason, the $\binom{5}{2} = 10$ quotients $\Mag^{\{i, j\}}$ of
$\Mag$ fit into the six equivalence classes
\begin{multline}
    \left\{\Mag^{\{1, 2\}}, \Mag^{\{4, 5\}}\right\},
    \left\{\Mag^{\{1, 3\}}, \Mag^{\{3, 5\}}\right\},
    \left\{\Mag^{\{1, 4\}}, \Mag^{\{2, 5\}}\right\}, \\
    \left\{\CAs{3}\right\},
    \left\{\Mag^{\{2, 3\}}, \Mag^{\{3, 4\}}\right\},
    \left\{\Mag^{\{2, 4\}}\right\}
\end{multline}
of anti-isomorphic operads.
\medbreak

Given an operad $\Oca$ with partial compositions $\circ_i$, we consider
the partial compositions $\bar{\circ}_i$ defined by
\begin{math}
  x \, \bar{\circ}_i \, y := x \circ_{|x| - i + 1} y
\end{math}
for all $x, y \in \Oca$ and $i \in [|x|]$. The reader can easily check
the assertions of the following lemma.
\medbreak

\begin{Lemma} \label{lem:recall_anti_isomorphic}
    Let $\Oca_1$ and $\Oca_2$ be two anti-isomorphic operads and let
    $\phi$ be an anti-isomorphism between $\Oca_1$ and $\Oca_2$. Then,
    \begin{enumerate}[label={(\it\roman*)}]
        \item \label{item:recall_anti_isomorphic_1}
        $\HilbertSeries_{\Oca_1}(t) = \HilbertSeries_{\Oca_2}(t)$;
        \item \label{item:recall_anti_isomorphic_2}
        if $\Rew^{(1)}$ is a convergent presentation of $\Oca_1$, then
        the set of rewrite rules $\Rew^{(2)}$ satisfying
        $\phi(x) \Rew^{(2)} \phi(y)$ for any $x, y \in \Oca_1$ whenever
        $x \Rew^{(1)} y$, is a convergent presentation of~$\Oca_2$;
        \item\label{item:realization_anti_isomorphic}
        If $\left(\Oca, \circ_i\right)$ is a combinatorial realization
        of $\Oca_1$, then
        \begin{math}
          \left(\Oca, \bar{\circ}_i\right)
        \end{math}
        is a combinatorial realization of $\Oca_2$.
    \end{enumerate}
\end{Lemma}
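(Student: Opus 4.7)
All three assertions follow by unpacking the definition of an anti-isomorphism $\phi : \Oca_1 \to \Oca_2$: by definition $\phi$ is a bijection respecting arities, sending unit to unit, and satisfying $\phi(x \circ_i y) = \phi(x) \circ_{|x|-i+1} \phi(y)$ for all $x, y \in \Oca_1$ and $i \in [|x|]$. The plan is to use this compatibility to transport each structural property from $\Oca_1$ to $\Oca_2$.

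For \ref{item:recall_anti_isomorphic_1}, since $\phi$ restricts to a bijection $\Oca_1(n) \to \Oca_2(n)$ for every $n \geq 1$, we get $\dim \Oca_1(n) = \dim \Oca_2(n)$ (or equal cardinalities in the set-theoretic case), whence the Hilbert series coincide by the definition~\eqref{equ:hilbert_series}. For \ref{item:recall_anti_isomorphic_2}, the key observation is that rewriting steps in $\Oca_1$ are in one-to-one correspondence with rewriting steps in $\Oca_2$ via $\phi$. More precisely, if $\RewContext^{(1)}$ and $\RewContext^{(2)}$ are the rewrite relations induced by $\Rew^{(1)}$ and $\Rew^{(2)}$, then by applying $\phi$ to the definition~\eqref{equ:rewrite_relation_induced} of induced rewrite relations and using the intertwining property of $\phi$, one obtains $x \RewContext^{(1)} y$ if and only if $\phi(x) \RewContext^{(2)} \phi(y)$. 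Therefore $\phi$ maps infinite chains to infinite chains and joinable branching pairs to joinable branching pairs, so termination and confluence transfer from $\Rew^{(1)}$ to $\Rew^{(2)}$. The fact that $\Rew^{(2)}$ is an orientation of the congruence defining $\Oca_2$ is similar: $\RewContextRST^{(2)}$ equals the transport under $\phi$ of $\RewContextRST^{(1)}$, which equals the congruence of $\Oca_1$, and its transport under the anti-isomorphism $\phi$ is precisely the congruence of $\Oca_2$.

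For \ref{item:realization_anti_isomorphic}, the assumption provides an isomorphism $\psi : \Oca_1 \to (\Oca, \circ_i)$. Define $\bar{\psi} : \Oca_2 \to (\Oca, \bar{\circ}_i)$ by $\bar{\psi} := \psi \circ \phi^{-1}$ (where $\phi^{-1}$ is the anti-inverse of $\phi$, which is itself an anti-isomorphism $\Oca_2 \to \Oca_1$). One checks directly that $\bar{\psi}$ is an operad isomorphism: for $u, v \in \Oca_2$ and $i \in [|u|]$, using the anti-isomorphism identity twice yields
\begin{equation}
\bar{\psi}(u \circ_i v) = \psi\bigl(\phi^{-1}(u) \circ_{|u|-i+1} \phi^{-1}(v)\bigr) = \psi(\phi^{-1}(u)) \circ_{|u|-i+1} \psi(\phi^{-1}(v)) = \bar{\psi}(u) \, \bar{\circ}_i \, \bar{\psi}(v),
\end{equation}
which, together with arity preservation and the unit condition, shows that $(\Oca, \bar{\circ}_i)$ is a combinatorial realization of $\Oca_2$. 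The only mildly technical point, and the one I would be most careful about, is checking that the operations $\bar{\circ}_i$ genuinely satisfy the operad axioms~\eqref{equ:operad_axiom_1}--\eqref{equ:operad_axiom_3} (this is automatic once the isomorphism $\bar{\psi}$ above is exhibited, but can also be verified by direct reindexing).
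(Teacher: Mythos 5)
Your proof is correct. The paper offers no proof of this lemma at all --- it is introduced with ``The reader can easily check the assertions of the following lemma'' --- and your unpacking of the anti-isomorphism identity $\phi(x \circ_i y) = \phi(x) \circ_{|x|-i+1} \phi(y)$ is exactly the intended routine verification: arity-preserving bijectivity gives \ref{item:recall_anti_isomorphic_1}, the intertwining of rewrite steps (hence of infinite chains and of branching pairs) gives \ref{item:recall_anti_isomorphic_2}, and conjugating the realization isomorphism by $\phi^{-1}$ gives \ref{item:realization_anti_isomorphic}. The only cosmetic caveat, inherited from the lemma's own phrasing, is that in \ref{item:recall_anti_isomorphic_2} the rewrite rules really live on $\Mag$ rather than on $\Oca_1$, so one should transport them by the mirror anti-automorphism of $\Mag$ that induces $\phi$; your argument goes through verbatim with that reading.
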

\medbreak

\subsection{Quotients on integer compositions}
Four among the six equivalence classes of the quotients
$\Mag^{\{i, j\}}$ of $\Mag$ can be realized in terms of operads on
integer compositions. Let us review these.
\medbreak

\subsubsection{Operads $\Mag^{\{1, 2\}}$ and $\Mag^{\{4, 5\}}$}
\label{subsubsec:Mag_1_2}
The reader can check, using the Buchberger algorithm for operads,
that the rewrite rule $\Afr_2 \Rew \Afr_1$ is a
convergent presentation of $\Mag^{\{1, 2\}}$. The operads
$\Mag^{\{1, 2\}}$ and $\Mag^{\{4, 5\}}$ are anti-isomorphic, so that
by Lemma~\ref{lem:recall_anti_isomorphic}, the rewrite rule
$\Afr_4 \Rew \Afr_5$ is a convergent presentation of~$\Mag^{\{4, 5\}}$.
In a similar fashion as Proposition~\ref{prop:Hilbert_series_CAs_3}, we
compute the following result thanks to~\cite{Gir18}.
\medbreak

\begin{Theorem} \label{thm:Hilbert_series_Mag_1_2}
    The Hilbert series of $\Mag^{\{1, 2\}}$ and $\Mag^{\{4, 5\}}$ are
    \begin{equation} \label{equ:Hilbert_series_Mag_1_2}
        \HilbertSeries_{\Mag^{\{1, 2\}}}(t)
        = \HilbertSeries_{\Mag^{\{4, 5\}}}(t) =
        t \frac{1 - t}{1 - 2t}.
    \end{equation}
\end{Theorem}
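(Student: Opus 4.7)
The plan begins with two reductions. By Lemma~\ref{lem:recall_anti_isomorphic}~\ref{item:recall_anti_isomorphic_1}, anti-isomorphic operads share the same Hilbert series; and the standard anti-automorphism of $\Mag$ exchanging left and right subtrees sends $\{\Afr_1,\Afr_2\}$ to $\{\Afr_5,\Afr_4\}$, so it identifies $\Mag^{\{1,2\}}$ with $\Mag^{\{4,5\}}$. Hence it suffices to compute $\HilbertSeries_{\Mag^{\{1,2\}}}(t)$. Since $\Afr_2\Rew\Afr_1$ is claimed to be a finite convergent presentation of $\Mag^{\{1,2\}}$ (verifiable by running the Buchberger algorithm, which produces no new rule from this single rewriting because the only branching trees it creates at degrees $\le 5$ are joinable), Lemma~\ref{lem:normal_forms_avoiding} provides a PBW basis of $\Mag^{\{1,2\}}$ in bijection with the binary trees that avoid $\Afr_2$ as a subtree.

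The next step is to turn the pattern-avoidance condition into a recursion. Writing $\Afr_2 = ((\Leaf,(\Leaf,\Leaf)),\Leaf)$ and using the definition of subtree recalled in Section~\ref{sec:operad_Mag}, the pattern $\Afr_2$ occurs at the root of an internal tree $\Tfr = (\Tfr_L,\Tfr_R)$ exactly when $\Tfr_L$ is internal and the right child of $\Tfr_L$ is also internal. Therefore a normal form is either a leaf, or a pair $(\Tfr_L,\Tfr_R)$ of normal forms such that either $\Tfr_L = \Leaf$ or the right child of $\Tfr_L$ is a leaf. Introduce $f(t) := \HilbertSeries_{\Mag^{\{1,2\}}}(t)$ and the auxiliary series $u(t)$ enumerating those normal forms which are either the leaf itself or whose root has a leaf as right child. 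Applying the same decomposition to the trees counted by $u$, one obtains
\begin{equation*}
    u(t) = t + t\, u(t), \qquad f(t) = t + u(t)\, f(t),
\end{equation*}
which solves successively to $u(t) = t/(1-t)$ and $f(t) = t(1-t)/(1-2t)$, matching Formula~\eqref{equ:Hilbert_series_Mag_1_2}.

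The only subtle point is the translation of the global avoidance condition into a local constraint at the root, that is, the observation that the pattern $\Afr_2$ has a leaf as right child of its root, so it is its left subtree that imposes the obstruction and recursion then takes over cleanly on both children. Once this is established, the remainder is routine algebraic manipulation. As an alternative, the generating series could be obtained directly from the general system of functional equations associated with tree-pattern avoidance in~\cite{Gir18}, invoked in the same way as in Proposition~\ref{prop:Hilbert_series_CAs_3}; after elimination this yields the same rational function.
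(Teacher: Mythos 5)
Your argument is correct and shares the paper's entire skeleton: reduce to $\Mag^{\{1,2\}}$ via the anti-automorphism of $\Mag$ exchanging left and right subtrees (which indeed maps $\Afr_1,\Afr_2$ to $\Afr_5,\Afr_4$), invoke the convergent presentation $\Afr_2 \Rew \Afr_1$ (which the paper also leaves to the reader), and count the normal forms, namely the binary trees avoiding $\Afr_2$. The one place you genuinely diverge is the counting step. The paper disposes of it by citing the general pattern-avoidance machinery of~\cite{Gir18}, ``in a similar fashion as Proposition~\ref{prop:Hilbert_series_CAs_3}''; you instead observe that an occurrence of $\Afr_2 = ((\Leaf,(\Leaf,\Leaf)),\Leaf)$ at a node is governed entirely by the left subtree, which turns global avoidance into the local root condition and yields the system $u = t + tu$, $f = t + uf$, hence $f = t(1-t)/(1-2t)$. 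This is a correct, self-contained replacement for the citation: the decomposition is exact because an occurrence of the pattern rooted at a node lies entirely in the subtree at that node, so no occurrence can straddle the root and both children, and $u\cdot f$ really enumerates all normal forms of arity at least $2$. What your route buys is an elementary proof independent of~\cite{Gir18}; what it loses is uniformity, since the paper's method applies verbatim to the other quotients $\Mag^{\{i,j\}}$ treated in the same section. Two small points you assert at the same level of detail as the paper: confluence of the single rule, which by Lemma~\ref{lem:degree_confluence} reduces to finitely many branching pairs in degree at most $5$ as you say; and termination, which here follows from the fact that each rewriting step strictly \emph{increases} the prefix word (the rule goes upward in the order of Lemma~\ref{lem:prefix_word_termination}, so one applies the mirror of that lemma rather than the lemma itself).
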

\medbreak

A Taylor expansion of series~\eqref{equ:Hilbert_series_Mag_1_2} shows
the following.
\medbreak

\begin{Proposition} \label{prop:Close_formula_Mag_1_2}
    For all $n \geq 2,$
    \begin{equation} \label{equ:Close_formula_Mag_1_2}
        \# \Mag^{\{1, 2\}}(n) = \# \Mag^{\{4, 5\}}(n) = 2^{n-2}.
    \end{equation}
\end{Proposition}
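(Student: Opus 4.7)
The plan is to derive the closed formula directly by computing the Taylor expansion of the rational Hilbert series provided by Theorem~\ref{thm:Hilbert_series_Mag_1_2}. Since the two operads $\Mag^{\{1,2\}}$ and $\Mag^{\{4,5\}}$ are anti-isomorphic, Lemma~\ref{lem:recall_anti_isomorphic}~\ref{item:recall_anti_isomorphic_1} tells us that their Hilbert series agree, so it is enough to extract the coefficient $[t^n]\,\HilbertSeries_{\Mag^{\{1,2\}}}(t)$ for $n \geq 2$ to settle both equalities at once.

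The computation I would carry out is the following. Starting from
\begin{equation}
\HilbertSeries_{\Mag^{\{1,2\}}}(t) = t\,\frac{1-t}{1-2t},
\end{equation}
I would rewrite the right-hand side as $t(1-t)\sum_{k \geq 0}(2t)^k$ using the geometric series expansion of $(1-2t)^{-1}$, which is valid as a formal power series. Distributing gives
\begin{equation}
\HilbertSeries_{\Mag^{\{1,2\}}}(t)
= \sum_{k \geq 0} 2^k t^{k+1} - \sum_{k \geq 0} 2^k t^{k+2}
= t + \sum_{n \geq 2}\left(2^{n-1} - 2^{n-2}\right) t^n.
\end{equation}
Then I would simplify $2^{n-1} - 2^{n-2} = 2^{n-2}$, which identifies the coefficient of $t^n$ with $2^{n-2}$ for every $n \geq 2$. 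By the definition of the Hilbert series, this coefficient is exactly $\#\Mag^{\{1,2\}}(n)$, and by the anti-isomorphism it also equals $\#\Mag^{\{4,5\}}(n)$, proving~\eqref{equ:Close_formula_Mag_1_2}.

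There is essentially no obstacle here: the proof is a one-line formal power series manipulation, and all the content has already been placed in Theorem~\ref{thm:Hilbert_series_Mag_1_2}. The only point requiring care is the initial term: for $n=1$ the coefficient is $1$, which is not of the form $2^{n-2}$, so the statement is correctly restricted to $n \geq 2$, and this restriction is automatically respected by the computation above since the $n=1$ contribution comes only from the first sum.
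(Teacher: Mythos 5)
Your proof is correct and follows exactly the route the paper takes: the paper simply states that a Taylor expansion of the rational series from Theorem~\ref{thm:Hilbert_series_Mag_1_2} yields the result, and you have carried out that expansion explicitly (including the correct handling of the $n=1$ term and the appeal to the anti-isomorphism for $\Mag^{\{4,5\}}$). Nothing is missing.
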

\medbreak

Many graduate sets of combinatorial objects are enumerated by powers of
$2$. We choose to present a combinatorial realization of
$\Mag^{\{1, 2\}}$ based on integer compositions. Recall that an
\Def{integer composition} is a finite sequence of positive integers. If
$\LambdaB := \left(\LambdaB_1, \dots, \LambdaB_p\right)$ is an integer
composition, we denote by $s_{i, j}(\LambdaB)$ the number
$1 + \sum_{i \leq k \leq j} \LambdaB_k$. The \Def{arity} of $\LambdaB$
is $s_{1, p}(\LambdaB)$. Observe that the empty integer composition
$\epsilon$ is the unique object of arity $1$. The graded set of all
integer compositions is denoted by~$\Compositions$.
\medbreak

Given an integer $i \geq 1$, we define the binary operation
\begin{math}
    \circ_i^{(1,2)} : \Compositions(n) \times \Compositions(m)
    \to\Compositions(n + m - 1)
\end{math}
for any integer compositions
$\LambdaB := \left(\LambdaB_1, \dots, \LambdaB_p\right)$ and
$\MuB := \left(\MuB_1, \dots, \MuB_q\right)$ of respective arities
$n \geq i$ and $m \geq 1$ by
\begin{equation}
    \LambdaB \circ_i^{(1,2)} \MuB :=
    \begin{cases}
        \left(\LambdaB_1, \dots, \LambdaB_p,
        \MuB_1, \dots, \MuB_q\right) &
        \mbox{if } i = n, \\
        \left(\LambdaB_1, \dots, \LambdaB_k, \LambdaB_{k + 1} + m - 1,
        \LambdaB_{k + 2}, \dots, \LambdaB_p\right)
            & \mbox{otherwise},
    \end{cases}
\end{equation}
where $k \geq 0$ is such that
\begin{math}
    s_{1, k}(\LambdaB) \leq i < s_{1, k + 1}(\LambdaB).
\end{math}
\medbreak

\begin{Proposition} \label{prop:Realisation_Mag_1_2}
    The operad $\left(\Compositions, \circ_i^{(1,2)}\right)$ is a
    combinatorial realization of $\Mag^{\{1, 2\}}$.
\end{Proposition}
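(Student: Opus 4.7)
The plan is to produce a surjective operad morphism $\bar\phi : \Mag^{\{1, 2\}} \to (\Compositions, \circ_i^{(1, 2)})$ and then conclude it is an isomorphism by matching arity-wise cardinalities via Proposition~\ref{prop:Close_formula_Mag_1_2}. First I would verify that $(\Compositions, \circ_i^{(1, 2)})$ is a well-defined nonsymmetric operad with unit the empty composition $\epsilon$ of arity $1$. The unit axiom is immediate: $\epsilon \circ_1 \LambdaB = \LambdaB$ falls in the concatenation rule (since $i = n = 1$), and $\LambdaB \circ_i \epsilon = \LambdaB$ holds because either the concatenation case leaves $\LambdaB$ unchanged or the bump case adds $m - 1 = 0$. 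The two associativity axioms~\eqref{equ:operad_axiom_1} and~\eqref{equ:operad_axiom_2} require a case analysis on whether each composition index lies at the rightmost position (triggering concatenation) or strictly inside a part (triggering a bump of that part); in every case the two sides become equal by bookkeeping on the prefix sums of the compositions involved.

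Next, since $\Compositions(2)$ contains the unique element $(1)$, the universal property of $\Mag$ provides a unique operad morphism $\phi : \Mag \to (\Compositions, \circ_i^{(1, 2)})$ sending $\Product$ to $(1)$. To show $\phi$ descends to $\Mag^{\{1, 2\}}$, I would decompose $\Afr_1 = (\Product \circ_1 \Product) \circ_1 \Product$ and $\Afr_2 = \Product \circ_1 (\Product \circ_2 \Product)$, and compute
\begin{equation*}
\phi(\Afr_1) = ((1) \circ_1 (1)) \circ_1 (1) = (2) \circ_1 (1) = (3)
\end{equation*}
and
\begin{equation*}
\phi(\Afr_2) = (1) \circ_1 ((1) \circ_2 (1)) = (1) \circ_1 (1, 1) = (3),
\end{equation*}
so $\phi(\Afr_1) = \phi(\Afr_2)$ and $\phi$ factors through the quotient as the desired $\bar\phi$.

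Finally, the image of $\bar\phi$ contains $(1)$ and is a suboperad of $\Compositions$, and a short induction on arity shows that $(1)$ generates all of $\Compositions$ under $\circ_i^{(1, 2)}$: single-part compositions $(\lambda)$ are built by iterating $(\lambda - 1) \circ_1 (1) = (\lambda)$ starting from $(1)$, and a general composition $(\lambda_1, \dots, \lambda_p)$ is assembled as $(\lambda_1) \circ_{\lambda_1 + 1} (\lambda_2, \dots, \lambda_p)$ via the concatenation rule. Hence $\bar\phi$ is surjective. Proposition~\ref{prop:Close_formula_Mag_1_2} gives $\# \Mag^{\{1, 2\}}(n) = 2^{n - 2} = \# \Compositions(n)$ for $n \geq 2$ (and both sides have cardinality $1$ for $n = 1$), so $\bar\phi$ is an arity-wise bijection, hence an operad isomorphism. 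The main obstacle is the case analysis required to verify the associativity axioms for $\circ_i^{(1, 2)}$, which branches according to how the composition indices interact with the part structure of the compositions.
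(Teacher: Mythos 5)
Your proposal is correct, but it takes a genuinely different route from the paper's. The paper works in the opposite direction: it identifies the normal forms of the convergent rewrite system $\Afr_2 \Rew \Afr_1$ as the trees $\LComb{\LambdaB_1, \dots, \LambdaB_p}$ indexed by integer compositions, defines $\phi$ as the resulting bijection, and then checks $\phi\left(\Tfr \circ_i \Tfr'\right) = \phi(\Tfr) \circ_i^{(1,2)} \phi\left(\Tfr'\right)$ by explicitly rewriting $\Tfr \circ_i \Tfr'$ down to normal form. With that approach the operad axioms for $\circ_i^{(1,2)}$ come for free by transport of structure along a bijection, so the paper never has to verify them. You instead verify (or rather, sketch the verification) that $\left(\Compositions, \circ_i^{(1,2)}\right)$ is an operad from scratch, then invoke the universal property of $\Mag$, check the single relation $\phi\left(\Afr_1\right) = (3) = \phi\left(\Afr_2\right)$ to factor through the quotient, get surjectivity from the fact that $(1)$ generates $\Compositions$, and finish by the dimension count of Proposition~\ref{prop:Close_formula_Mag_1_2}. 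What your route buys is that the rewriting computation is reduced to one two-line evaluation in arity $4$ plus an enumeration, rather than the iterated rewriting argument of the paper; what it costs is the direct case analysis of axioms \eqref{equ:operad_axiom_1} and \eqref{equ:operad_axiom_2} for $\circ_i^{(1,2)}$, which you correctly identify as the main burden but leave as ``bookkeeping on prefix sums.'' That analysis does go through (the axioms hold), but it is the one substantive step you have not written out, and it is comparable in length to the rewriting argument it replaces. Note also that both proofs ultimately lean on the convergence of $\Afr_2 \Rew \Afr_1$: the paper uses it directly to describe the normal forms, while you use it indirectly through the dimension formula $\# \Mag^{\{1,2\}}(n) = 2^{n-2}$, which the paper derives from that same presentation. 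All the explicit computations you do show --- the decompositions of $\Afr_1$ and $\Afr_2$, the evaluations $(1) \circ_1^{(1,2)} (1) = (2)$, $(2) \circ_1^{(1,2)} (1) = (3)$, $(1) \circ_1^{(1,2)} (1,1) = (3)$, the unit axiom, and the generation of $\Compositions$ by $(1)$ --- are correct.
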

\begin{proof}
    We have to show that the operads
    $\left(\Compositions, \circ_i^{(1,2)}\right)$ and $\Mag^{\{1, 2\}}$
    are isomorphic. The set of normal forms of arity $n$ for the rewrite
    relation $\RewContext$ induced by the rule $\Afr_2 \Rew \Afr_1$ is
    \begin{equation} \label{equ:Normal_forms_A2}
        \left\{\LComb{\LambdaB_1, \dots, \LambdaB_p} :
        \left(\LambdaB_1, \dots, \LambdaB_p\right)
        \in \Compositions(n) \right\}
    \end{equation}
    where
    \begin{equation}
        \LComb{\LambdaB_1, \dots, \LambdaB_p} :=
        \RComb{p} \circ
        \left[\LComb{\LambdaB_1-1}, \dots, \LComb{\LambdaB_p-1},
        \Leaf\right].
    \end{equation}
    Thus, the map $\phi : \Mag^{\{1, 2\}}(n) \to \Compositions(n)$
    defined by
    \begin{equation} \label{equ:Morphism_A2}
        \phi\left(\LComb{\LambdaB_1, \dots, \LambdaB_p} \right) :=
        \left(\LambdaB_1, \dots, \LambdaB_p\right)
    \end{equation}
    is a bijection. Let us show that $\phi$ is an operad morphism.
    Let $\LambdaB := \left(\LambdaB_1, \dots, \LambdaB_p\right)$ and
    $\MuB := \left(\MuB_1, \dots, \MuB_q\right)$ be two integer
    compositions of respective arities $n$ and $m$, and let
    \begin{equation}
        \Tfr := \LComb{\LambdaB_1, \dots, \LambdaB_p}
        \in \Mag^{\{1, 2\}}(n)
        \quad \mbox{ and } \quad
        \Tfr' := \LComb{\MuB_1, \dots, \MuB_q} \in \Mag^{\{1, 2\}}(m).
    \end{equation}
    The tree $\Tfr \circ_n \Tfr'$ is equal to
    $\LComb{\LambdaB_1, \dots, \LambdaB_p, \MuB_1, \dots, \MuB_q}$, so
    that
    \begin{math}
        \phi\left(\Tfr \circ_n \Tfr'\right)
        = \phi\left(\Tfr\right) \circ_n^{(1,2)} \phi\left(\Tfr'\right).
    \end{math}
    Let $i \in [n - 1]$ and $k$ be such that
    $s_{1, k}(\LambdaB) \leq i < s_{1, k+1}(\LambdaB)$, so that
    $\Tfr \circ_i \Tfr'$ is equal to
    \begin{equation} \label{equ:Rewriting_1_Mag_1_2}
        \RComb{p} \circ \left[\LComb{\LambdaB_1-1}, \dots,
            \LComb{\LambdaB_k-1}, \LComb{\LambdaB_{k+1}-1}
            \circ_{i+1-s_{1,k}(\LambdaB)}\Tfr', \LComb{\LambdaB_{k+2}-1},
            \dots, \LComb{\LambdaB_p-1}, \Leaf \right].
    \end{equation}
    The tree~\eqref{equ:Rewriting_1_Mag_1_2} rewrites by $\RewContext$
    into
    \begin{equation}
        \RComb{p} \circ \left[\LComb{\LambdaB_1-1}, \dots,
        \left(
        \LComb{\LambdaB_{k+1}-1} \circ_{i+1-s_{1,k}(\LambdaB)}
        \LComb{\MuB_1-1}
        \right)
        \circ_{i+1-s_{1,k}(\LambdaB)+\MuB_1}
        \LComb{\MuB_2, \dots, \MuB_q},
        \dots, \LComb{\LambdaB_p-1},
        \Leaf \right]
    \end{equation}
    which rewrite itself by $\RewContext$ into
    \begin{equation} \label{equ:Rewriting_2_Mag_1_2}
        \RComb{p} \circ \left[\LComb{\LambdaB_1-1}, \dots,
        \LComb{\LambdaB_k-1}, \LComb{\LambdaB_{k+1}-1+\MuB_1}
        \circ_{i+1-s_{1,k}(\LambdaB)+\MuB_1}
        \LComb{\MuB_2, \dots, \MuB_q}, \LComb{\LambdaB_{k+2}-1}, \dots,
        \LComb{\LambdaB_p-1}, \Leaf \right]
    \end{equation}
    in $\MuB_1-1$ steps. By iterating $q-1$ times the rewrite steps
    passing from~\eqref{equ:Rewriting_1_Mag_1_2}
    to~\eqref{equ:Rewriting_2_Mag_1_2}, we get
    \begin{equation}
        \Tfr \circ_i \Tfr'
        \RewContext
        \LComb{\LambdaB_1, \dots, \LambdaB_k,
        \LambdaB_{k+1} + m-1, \LambdaB_{k+2}, \dots, \LambdaB_p},
    \end{equation}
    so that
    \begin{math}
        \phi(\Tfr \circ_i \Tfr')
        = \phi(\Tfr) \circ_i^{(1,2)} \phi(\Tfr').
    \end{math}
    Therefore $\phi$ is an operad morphism.
\end{proof}
\medbreak

From Lemma~\ref{lem:recall_anti_isomorphic}, we deduce that
$\left(\Compositions, \bar{\circ}_i^{(1,2)}\right)$ is a combinatorial
realization of $\Mag^{\{4, 5\}}$.
\medbreak

\subsubsection{Operads $\Mag^{\{1, 3\}}$ and $\Mag^{\{3, 5\}}$}
The reader can check that the rewrite rule $\Afr_3 \Rew \Afr_1$ is a
convergent presentation of $\Mag^{\{1, 3\}}$. By
Lemma~\ref{lem:recall_anti_isomorphic}, the rewrite rule
$\Afr_3 \Rew \Afr_5$ is a convergent presentation of~$\Mag^{\{3, 5\}}$.
Thanks to~\cite{Gir18}, the Hilbert series of $\Mag^{\{1, 3\}}$ and
$\Mag^{\{3, 5\}}$ are equals to~\eqref{equ:Hilbert_series_Mag_1_2}. Thus,
for all $n \geq 2$,
$\# \Mag^{\{1, 3\}}(n)$ and $\# \Mag^{\{3, 5\}}(n)$ are
equal to~\eqref{prop:Close_formula_Mag_1_2}.
\medbreak

Like in Section~\ref{subsubsec:Mag_1_2}, we choose a combinatorial
realization based on integer compositions.
Given an integer $i \geq 1$, we define the binary operation
\begin{math}
    \circ_i^{(1,3)} : \Compositions(n) \times \Compositions(m)
    \to\Compositions(n + m - 1)
\end{math}
for any integer compositions
$\LambdaB := \left(\LambdaB_1, \dots, \LambdaB_p\right)$ and
$\MuB := \left(\MuB_1, \dots, \MuB_q\right)$ of respective arities
$n \geq i$ and $m \geq 1$ by
\begin{equation}
    \LambdaB \circ_i^{(1,3)} \MuB :=
    \begin{cases}
        \left(\LambdaB_1, \dots, \LambdaB_{i - 1},
        \MuB_1 + s_{i, p}(\LambdaB), \MuB_2, \dots, \MuB_q\right) &
        \mbox{if } i \leq p + 1, \\
        \left(\LambdaB_1, \dots, \LambdaB_{k - 1},
        \LambdaB_k + m - 1,
        \LambdaB_{k + 1}, \dots, \LambdaB_p\right)
            & \mbox{otherwise},
    \end{cases}
\end{equation}
where $k \geq 0$ is such that
\begin{math}
    k + 1 + s_{k + 1, p}(\LambdaB) \leq i < k + s_{k, p}(\LambdaB).
\end{math}
\medbreak

\begin{Proposition} \label{prop:Realisation_Mag_1_3}
    The operad $\left(\Compositions, \circ_i^{(1,3)}\right)$ is a
    combinatorial realization of $\Mag^{\{1, 3\}}$.
\end{Proposition}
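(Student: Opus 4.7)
The plan is to mirror the proof of Proposition~\ref{prop:Realisation_Mag_1_2}. Since (as stated just above the proposition) the rewrite rule $\Afr_3 \Rew \Afr_1$ is a convergent presentation of $\Mag^{\{1,3\}}$, the quotient operad $\Mag^{\{1,3\}}$ is in bijection, arity by arity, with the set $\NormalForms_{\RewContext}$ of normal forms of the induced rewrite relation $\RewContext$.

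First I would identify $\NormalForms_{\RewContext}$ explicitly. By Lemma~\ref{lem:normal_forms_avoiding}, these are the binary trees that avoid $\Afr_3$ as a subtree, i.e.\ the trees in which no internal node has two internal-node children~--- equivalently, at every internal node at least one of its two children is a leaf. Such trees of arity $n \geq 2$ consist of a main path of $n-1$ internal nodes from the root down to a cherry, where at each of the $n-2$ upper nodes one chooses whether the ``hanging leaf'' lies on the left or on the right. Their number is $2^{n-2}$, in agreement with Proposition~\ref{prop:Close_formula_Mag_1_2}.

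Next I would set up a bijection $\phi : \Mag^{\{1,3\}}(n) \to \Compositions(n)$ by encoding each normal-form tree as the integer composition recording the successive lengths of the maximal same-side runs of hanging leaves along the main path (with a fixed convention at the top, dictated by the case split in the definition of $\circ_i^{(1,3)}$). This yields an explicit tree $\Tfr_\LambdaB$, definable recursively from $\LambdaB = (\LambdaB_1, \dots, \LambdaB_p)$ along the lines of the notation $\LComb{\LambdaB_1, \dots, \LambdaB_p}$ introduced in~\eqref{equ:Normal_forms_A2}, and so a bijection $\phi\left(\Tfr_\LambdaB\right) := \LambdaB$.

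The heart of the proof is the verification that $\phi$ is an operad morphism, namely that
\begin{equation}
\phi\left(\Tfr_\LambdaB \circ_i \Tfr_\MuB\right)
\;=\; \LambdaB \circ_i^{(1,3)} \MuB
\end{equation}
for every $\LambdaB \in \Compositions(n)$, $\MuB \in \Compositions(m)$, and $i \in [n]$. I would graft $\Tfr_\MuB$ onto the $i$-th leaf of $\Tfr_\LambdaB$ inside $\Mag$, then reduce the (non-normal) result to its unique normal form by repeatedly applying the rule $\Afr_3 \Rew \Afr_1$, and check that this normal form is $\Tfr_{\LambdaB \circ_i^{(1,3)} \MuB}$. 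The case split $i \leq p+1$ vs.\ $i > p+1$ mirrors the two clauses of $\circ_i^{(1,3)}$: in the first case the target leaf lies in the ``top'' portion of the main path of $\Tfr_\LambdaB$ and the composition is essentially a concatenation requiring little or no rewriting, while in the second case the target leaf is one of the leaves hanging off further down, forcing a cascade of rewrites that propagates the root of $\Tfr_\MuB$ back up the main path, similarly to the passage from~\eqref{equ:Rewriting_1_Mag_1_2} to~\eqref{equ:Rewriting_2_Mag_1_2} in the proof of Proposition~\ref{prop:Realisation_Mag_1_2}. The main obstacle is bookkeeping: one must carefully control the order in which the nested occurrences of $\Afr_3$ are rewritten and verify that the resulting composition shape matches the prescribed formula, but the argument proceeds by induction on the number of parts of $\MuB$ exactly as in the $\Mag^{\{1,2\}}$ case.
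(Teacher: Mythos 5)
Your proposal follows essentially the same route as the paper's own proof (which is itself only an outline): both identify the normal forms of $\Afr_3 \Rew \Afr_1$ as the trees in which every internal node has at least one leaf child, encode these $2^{n-2}$ zigzag trees by integer compositions, and reduce the statement to checking that grafting followed by normalization realizes $\circ_i^{(1,3)}$. The only difference is that the paper pins the encoding down explicitly as
\begin{math}
\LComb{\LambdaB_1}\circ_2\left(\LComb{\LambdaB_2}\circ_2\left(\cdots\circ_2\LComb{\LambdaB_p}\right)\right),
\end{math}
which is the precise version of the run-length bijection you describe informally.
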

\begin{proof}
    The proof is similar to the one of
    Proposition~\ref{prop:Realisation_Mag_1_2}, thus we just give an
    outline of it. We have to show that the operads
    $\left(\Compositions, \circ_i^{(1,3)}\right)$ and
    $\Mag^{\{1, 3\}}$ are isomorphic.
    The set of normal forms of arity $n$ for the rewrite rule
    $\RewContext$ induced by
    $\Afr_3 \Rew \Afr_1$ is
    \begin{equation}
      \left\{\Lightning{\LambdaB_1, \dots, \LambdaB_p} :
      \left(\LambdaB_1, \dots, \LambdaB_p\right)
      \in \Compositions(n) \right\}
    \end{equation}
    where
    \begin{equation}
        \Lightning{\LambdaB_1, \dots, \LambdaB_p} :=
        \LComb{\LambdaB_1} \circ_2 \left(\LComb{\LambdaB_2}
        \circ_2 \left(\cdots \left(
        \LComb{\LambdaB_{p-1}} \circ_2 \LComb{\LambdaB_p} \right)
        \cdots \right) \right).
    \end{equation}
    Thus, it is possible to show that the map
    $\phi : \Mag^{\{1, 3\}}(n) \to \Compositions(n)$ defined by
    \begin{equation}
        \phi\left(\Lightning{\LambdaB_1, \dots, \LambdaB_p} \right)
        :=
        \left(\LambdaB_1, \dots, \LambdaB_p \right)
    \end{equation}
    is an operad isomorphism from $\Mag^{\{1, 3\}}$ to
    $\left(\Compositions, \circ_i^{(1,3)}\right)$.
\end{proof}
\medbreak

From Lemma~\ref{lem:recall_anti_isomorphic}, we deduce that
$\left(\Compositions, \bar{\circ}_i^{(1,3)}\right)$ is a combinatorial
realization of $\Mag^{\{3, 5\}}$.
\medbreak

\subsubsection{Operads $\Mag^{\{1, 4\}}$ and $\Mag^{\{2, 5\}}$}
The reader can check that the rewrite rule $\Afr_4 \Rew \Afr_1$ is a
convergent presentation of $\Mag^{\{1, 4\}}$. By
Lemma~\ref{lem:recall_anti_isomorphic}, the rewrite rule
$\Afr_2 \Rew \Afr_5$ is a convergent presentation of~$\Mag^{\{2, 5\}}$.
Thanks to~\cite{Gir18}, the Hilbert series of $\Mag^{\{1, 4\}}$ and
$\Mag^{\{2, 5\}}$ are equals to~\eqref{equ:Hilbert_series_Mag_1_2}. Thus,
for $n \geq 2$, $\# \Mag^{\{1, 4\}}(n)$ and $\# \Mag^{\{2, 5\}}(n)$ are
equal to~\eqref{prop:Close_formula_Mag_1_2}.
\medbreak

Like in Section~\ref{subsubsec:Mag_1_2}, we choose a combinatorial
realization based on integer compositions. Given an integer $i \geq 1$,
we define the binary operation
\begin{math}
    \circ_i^{(2,5)} : \Compositions(n) \times \Compositions(m)
    \to\Compositions(n + m - 1)
\end{math}
for any integer compositions
$\LambdaB := \left(\LambdaB_1, \dots, \LambdaB_p\right)$ and
$\MuB := \left(\MuB_1, \dots, \MuB_q\right)$ of respective arities
$n \geq i$ and $m \geq 1$ by
\begin{equation}
    \LambdaB \circ_i^{(2,5)} \MuB :=
    \begin{cases}
        \left(\LambdaB_1, \dots, \LambdaB_k,
        \MuB_1, \dots, \MuB_{q-1}, \MuB_q+
        \LambdaB_{k + 1}, \dots, \LambdaB_p\right)
            & \mbox{if } i = s_{1, k}(\LambdaB), \\
        \left(\LambdaB_1, \dots, \LambdaB_k,
        i - s_{1, k}(\LambdaB), \MuB_1, \dots,
        \MuB_q, s_{1, k+1}(\LambdaB) - i, \LambdaB_{k + 2}, \dots,
        \LambdaB_p
        \right)
            & \mbox{otherwise},
    \end{cases}
\end{equation}
where $k \geq 0$ is such that
\begin{math}
    s_{1, k}(\LambdaB) \leq i < s_{1, k + 1}(\LambdaB).
\end{math}
\medbreak

\begin{Proposition} \label{prop:Realisation_Mag_1_4}
    The operad $\left(\Compositions, \circ_i^{(2,5)}\right)$ is a
    combinatorial realization of~$\Mag^{\{2, 5\}}$.
\end{Proposition}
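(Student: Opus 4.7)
The plan is to follow the template of Propositions~\ref{prop:Realisation_Mag_1_2} and~\ref{prop:Realisation_Mag_1_3} and exhibit $\phi \colon \Mag^{\{2,5\}}(n) \to \Compositions(n)$, sending $\LComb{\LambdaB_1, \dots, \LambdaB_p}$ to $(\LambdaB_1, \dots, \LambdaB_p)$, as an operad isomorphism. First, since $\Afr_2 \Rew \Afr_5$ is a convergent orientation of the congruence defining $\Mag^{\{2,5\}}$, Lemma~\ref{lem:normal_forms_avoiding} identifies the normal forms of arity $n$ with the binary trees of arity $n$ avoiding $\Afr_2$ as a subtree. Because the forbidden left member is the same as in the presentation used for $\Mag^{\{1,2\}}$, the recursive decomposition along the right spine used (implicitly) in the proof of Proposition~\ref{prop:Realisation_Mag_1_2} applies verbatim and shows that these normal forms are exactly the trees
\begin{math}
\LComb{\LambdaB_1, \dots, \LambdaB_p} = \RComb{p} \circ [\LComb{\LambdaB_1 - 1}, \dots, \LComb{\LambdaB_p - 1}, \Leaf]
\end{math}
parametrized by integer compositions in $\Compositions(n)$. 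Hence $\phi$ is a bijection.

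To show $\phi$ is an operad morphism, I will fix two normal forms $\Tfr = \LComb{\LambdaB_1, \dots, \LambdaB_p}$ and $\Tfr' = \LComb{\MuB_1, \dots, \MuB_q}$ of arities $n$ and $m$ and an index $i \in [n]$, compute $\Tfr \circ_i \Tfr'$ in $\Mag$, and trace its unique reduction to a normal form under the rewrite relation $\RewContext$ induced by $\Afr_2 \Rew \Afr_5$. Letting $k$ be the unique integer with $s_{1,k}(\LambdaB) \leq i < s_{1,k+1}(\LambdaB)$, with $k = p$ treated as the trailing-leaf case, leaf $i$ lies inside, or at the left boundary of, the subtree $\LComb{\LambdaB_{k+1} - 1}$ grafted at the $(k{+}1)$-th leaf of the backbone $\RComb{p}$ of $\Tfr$. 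The two branches of $\circ_i^{(2,5)}$ correspond to whether $i = s_{1,k}(\LambdaB)$ (boundary case) or $s_{1,k}(\LambdaB) < i < s_{1,k+1}(\LambdaB)$ (interior case), so the verification splits accordingly.

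In the interior case, a direct inspection shows that grafting $\Tfr'$ strictly inside $\LComb{\LambdaB_{k+1} - 1}$ splits this left comb into two sub-combs of sizes $i - s_{1,k}(\LambdaB)$ and $s_{1,k+1}(\LambdaB) - i$ bracketing the top of $\Tfr'$, and the resulting tree is already a normal form (no $\Afr_2$ pattern appears) whose image under $\phi$ is the second branch of~$\circ_i^{(2,5)}$. The main obstacle is the boundary case, where grafting $\Tfr'$ at the leftmost leaf of $\LComb{\LambdaB_{k+1} - 1}$ introduces, whenever the right backbone $\RComb{q}$ of $\Tfr'$ is nontrivial (i.e., $q \geq 2$), an $\Afr_2$ subtree immediately above the grafting point. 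The reduction to the normal form then proceeds as a cascade of $\Afr_2 \Rew \Afr_5$ rewrites that straightens the $\RComb{q}$-backbone of $\Tfr'$ into the ambient right spine of $\Tfr$, absorbing $\MuB_q$ into $\LambdaB_{k+1}$ and producing exactly the first branch of $\circ_i^{(2,5)}$. Tracking this cascade by induction on $q$, together with a careful analysis of how each rewrite can expose a new $\Afr_2$ pattern one level deeper in the right spine, yields the identification $\phi(\Tfr \circ_i \Tfr') = (\LambdaB_1, \dots, \LambdaB_p) \circ_i^{(2,5)} (\MuB_1, \dots, \MuB_q)$ and concludes the proof.
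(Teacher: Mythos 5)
Your overall strategy coincides with the paper's (which only outlines this proof): identify the normal forms of $\Afr_2 \Rew \Afr_5$ with the trees $\LComb{\LambdaB_1, \dots, \LambdaB_p}$ and check that the map $\phi$ intertwines the graft-then-reduce composition with $\circ_i^{(2,5)}$. The bijection part is fine. The problem is the boundary case $i = s_{1,k}(\LambdaB)$, which you do not actually compute: you assert that the cascade of $\Afr_2 \Rew \Afr_5$ rewrites ends by ``absorbing $\MuB_q$ into $\LambdaB_{k+1}$'' and lands on the first branch of $\circ_i^{(2,5)}$. That assertion fails as soon as $q \geq 2$. Take $\LambdaB = (2)$, $\MuB = (1,1)$, $i = 1$, so that $\Tfr = \LComb{2}$ and $\Tfr' = \LComb{1,1} = \RComb{2}$. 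Grafting gives the tree with prefix word $222020000$, which contains a single occurrence of $\Afr_2$; two successive applications of $\Afr_2 \Rew \Afr_5$ produce the normal form with prefix word $202022000$, that is $\LComb{1,1,2}$. The first branch of the definition gives instead $(2) \circ_1^{(2,5)} (1,1) = \left(\MuB_1, \MuB_2 + \LambdaB_1\right) = (1,3)$, and $\LComb{1,3}$ has prefix word $202220000$. These are two distinct normal forms of a convergent rewrite relation, hence distinct elements of $\Mag^{\{2, 5\}}$, so $\phi\left(\Tfr \circ_1 \Tfr'\right) \neq \phi(\Tfr) \circ_1^{(2,5)} \phi\left(\Tfr'\right)$.

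What actually happens in the boundary case is that each application of $\Afr_2 \Rew \Afr_5$ pushes the ambient right-hand leaf one level further down the backbone of $\Tfr'$, so for $q \geq 2$ the parts $\MuB_1, \dots, \MuB_q$ and $\LambdaB_{k+1}$ all survive as separate parts of the resulting composition (the merging you describe only occurs when $q = 1$, where $\Tfr'$ is itself a left comb and no rewrite is triggered). Consequently the induction on $q$ that you propose would, if carried out, contradict the first branch of the printed definition of $\circ_i^{(2,5)}$ rather than confirm it. This points to a defect in that definition which must be repaired before the proposition can be established; but as far as your argument is concerned, the one step you assert instead of performing is precisely the step that fails.
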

\begin{proof}
    The proof is similar to the one of
    Proposition~\ref{prop:Realisation_Mag_1_2}, thus we just give an
    outline of it.
    We have to show that the operad
    $\left(\Compositions, \circ_i^{(2,5)}\right)$ and $\Mag^{\{2, 5\}}$
    are isomorphic. The set of normal forms of arity $n$ for the
    rewrite rule $\RewContext$ induced by $\Afr_2 \Rew \Afr_5$
    is~\eqref{equ:Normal_forms_A2}. Thus, it is possible to show that
    the map~\eqref{equ:Morphism_A2} is an operad isomorphism from
    $\Mag^{\{2, 5\}}$ to~$\left(\Compositions, \circ_i^{(2,5)}\right)$.
\end{proof}
\medbreak

From Lemma~\ref{lem:recall_anti_isomorphic}, we deduce that
$\left(\Compositions, \bar{\circ}_i^{(2,5)}\right)$ is a combinatorial
realization of $\Mag^{\{1, 4\}}$.
\medbreak

\subsubsection{Operad $\Mag^{\{2, 4\}}$}
The reader can check that the rewrite rules $\Afr_2 \Rew \Afr_4$ and
$\Afr_4 \Rew' \Afr_2$ are both convergent presentations of
$\Mag^{\{2, 4\}}$.
Thanks to~\cite{Gir18}, the Hilbert series of $\Mag^{\{2, 4\}}$
is equal to~\eqref{equ:Hilbert_series_Mag_1_2}. Thus, for $n \geq 2$,
$\# \Mag^{\{2, 4\}}(n)$ is equal to~\eqref{prop:Close_formula_Mag_1_2}.
\medbreak

Like in Section~\ref{subsubsec:Mag_1_2}, we choose a combinatorial
realization based on integer compositions. Given an integer $i \geq 1$,
we define the binary operation
\begin{math}
    \circ_i^{(2,4)} : \Compositions(n) \times \Compositions(m)
    \to\Compositions(n + m - 1)
\end{math}
for any integer compositions
$\LambdaB := \left(\LambdaB_1, \dots, \LambdaB_p\right)$ and
$\MuB := \left(\MuB_1, \dots, \MuB_q\right)$ of respective arities
$n \geq i$ and $m \geq 1$ by
\begin{equation}
    \LambdaB \circ_i^{(2,4)} \MuB :=
    \begin{cases}
        \left(\LambdaB_1, \dots, \LambdaB_k,
        \MuB_1, \dots, \MuB_{q - 1}, \MuB_q + \LambdaB_{k + 1},
        \LambdaB_{k + 2}, \dots, \LambdaB_p\right)
            & \mbox{if } i = s_{1, k}(\LambdaB), \\
        \left(\LambdaB_1, \dots, \LambdaB_k,
        i - s_{1, k}(\LambdaB), \MuB_1, \dots, \MuB_{q - 1},
        \MuB_q + s_{1, k+1}(\LambdaB) - i, \LambdaB_{k + 2}, \dots,
        \LambdaB_p
        \right)
            & \mbox{otherwise},
    \end{cases}
\end{equation}
where $k \geq 0$ is such that
\begin{math}
    s_{1, k}(\LambdaB) \leq i < s_{1, k+1}(\LambdaB).
\end{math}
\medbreak

\begin{Proposition} \label{prop:Realisation_Mag_2_4}
    The operads $\left(\Compositions, \circ_i^{(2,4)}\right)$
    and $\left(\Compositions, \bar{\circ}_i^{(2,4)}\right)$
    are combinatorial realizations of~$\Mag^{\{2, 4\}}$.
\end{Proposition}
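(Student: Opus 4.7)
The plan is to follow the method of Propositions~\ref{prop:Realisation_Mag_1_2}, \ref{prop:Realisation_Mag_1_3}, and~\ref{prop:Realisation_Mag_1_4}. Since the text already asserts that $\Afr_2 \Rew \Afr_4$ is a convergent presentation of $\Mag^{\{2,4\}}$, I would first use Lemma~\ref{lem:normal_forms_avoiding} to identify $\NormalForms_{\RewContext}$ with the set of binary trees avoiding $\Afr_2$ as a subtree. I expect these normal forms to be indexable, arity by arity, by integer compositions through a combinatorial reading of their shape (similar to the encoding by left combs along a right spine in the proof of Proposition~\ref{prop:Realisation_Mag_1_2}). This yields a bijection $\phi : \Mag^{\{2,4\}}(n) \to \Compositions(n)$.

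Next, I would show that $\phi$ is an operad morphism. Given two normal forms $\Tfr, \Tfr'$ with $\phi(\Tfr) = \LambdaB$ and $\phi(\Tfr') = \MuB$, the task is to reduce $\Tfr \circ_i \Tfr'$ in $\Mag$ by $\RewContext$ to a normal form and check that the resulting integer composition matches $\LambdaB \circ_i^{(2,4)} \MuB$. The two cases in the definition of $\circ_i^{(2,4)}$ — depending on whether $i$ equals a boundary value $s_{1,k}(\LambdaB)$ or lies strictly inside a part — should correspond to the two reduction patterns one observes when propagating rewrites around the grafting site, with the absorption of $\MuB_q$ into the adjacent part of $\LambdaB$ being the combinatorial trace of the final rewrite step eliminating the $\Afr_2$-pattern created by the grafting.

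For the second realization, since $\Afr_2$ and $\Afr_4$ are mirror images of each other, the anti-automorphism of $\Mag$ exchanging left and right subtrees preserves the defining congruence of $\Mag^{\{2,4\}}$ and descends to an anti-automorphism of $\Mag^{\{2,4\}}$ itself (as reflected by $\{\Mag^{\{2,4\}}\}$ forming its own singleton equivalence class of anti-isomorphic quotients). Applying Lemma~\ref{lem:recall_anti_isomorphic}~\ref{item:realization_anti_isomorphic} to the realization $\left(\Compositions, \circ_i^{(2,4)}\right)$ obtained in the previous step then immediately gives that $\left(\Compositions, \bar{\circ}_i^{(2,4)}\right)$ is also a combinatorial realization of $\Mag^{\{2,4\}}$. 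Alternatively, one could derive the second realization independently from the other convergent presentation $\Afr_4 \Rew' \Afr_2$ mentioned in the text, using its $\Afr_4$-avoiding normal forms.

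The main obstacle is the verification of the partial composition formula in the second step, because a single composition $\Tfr \circ_i \Tfr'$ generically triggers a cascade of applications of $\Afr_2 \Rew \Afr_4$ along a portion of the tree, and one must carefully track how the parts of $\LambdaB$ and $\MuB$ recombine — particularly the asymmetric way in which $\MuB_q$ merges into the $\LambdaB$-part immediately following position $i$ — across the two cases of the formula.
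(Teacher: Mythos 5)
Your proposal is correct and follows essentially the same route as the paper: the paper's own (outline) proof likewise uses the convergent presentation $\Afr_2 \Rew \Afr_4$, identifies the normal forms with the set~\eqref{equ:Normal_forms_A2} of left combs grafted along a right spine indexed by compositions, and asserts that the map~\eqref{equ:Morphism_A2} is an operad isomorphism, the composition formula being verified by tracking the rewrite cascade exactly as you describe. Your explicit treatment of the second realization via Lemma~\ref{lem:recall_anti_isomorphic}~\ref{item:realization_anti_isomorphic} and the self-anti-isomorphism of $\Mag^{\{2,4\}}$ only makes precise what the paper leaves implicit.
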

\begin{proof}
    The proof is similar to the one of
    Proposition~\ref{prop:Realisation_Mag_1_2}, thus we just give an
    outline of it.
    We have to show that the operad
    $\left(\Compositions, \circ_i^{(2,4)}\right)$ and $\Mag^{\{2, 4\}}$
    are isomorphic. The set of normal forms of arity $n$ for the
    rewrite rule $\RewContext$ induced by $\Afr_2 \Rew \Afr_4$
    is~\eqref{equ:Normal_forms_A2}. Thus, it is possible to show that
    the map~\eqref{equ:Morphism_A2} is an operad isomorphism from
    $\Mag^{\{2, 4\}}$ to~$\left(\Compositions, \circ_i^{(2,4)}\right)$.
\end{proof}
\medbreak

\subsubsection{Non-isomorphism of the operads}
As shown in the previous sections, the operads of the four considered
equivalence classes $\left\{\Mag^{\{1, 2\}}, \Mag^{\{4, 5\}}\right\}$,
$\left\{\Mag^{\{1, 3\}}, \Mag^{\{3, 5\}}\right\}$,
$\left\{\Mag^{\{1, 4\}}, \Mag^{\{2, 5\}}\right\}$,
and $\left\{\Mag^{\{2, 4\}}\right\}$ have the same Hilbert series. Even
if they can be realized on the same set of integer compositions, all
these operads are pairwise non-isomorphic (and also
non-anti-isomorphic). Indeed, any (anti-)isomorphism between two of
these operads necessarily maps the generator of the first to the
generator of the second, and since by definition the nontrivial
relations between the generators are different from one operad to
another, the operads cannot by (anti-)isomorphic. This remark is also
valid for the corresponding linear operads.
\medbreak

\subsection{Quotients with complicated presentations}
We do not find finite convergent presentations for the operads
$\Mag^{\{2, 3\}}$ and $\Mag^{\{3, 4\}}$. However, thanks to computer
explorations, we conjecture that the rewrite rules

\begin{minipage}{5cm}
\begin{equation}
    \Afr_4 \enspace \Rew \enspace \Afr_3,
\end{equation}
\end{minipage}
\begin{minipage}{9cm}
\begin{equation}
    \begin{tikzpicture}[xscale=.2,yscale=.17,Centering]
        \node(0)at(0.00,-8.25){};
        \node(10)at(10.00,-8.25){};
        \node(2)at(2.00,-8.25){};
        \node(4)at(4.00,-5.50){};
        \node(6)at(6.00,-5.50){};
        \node(8)at(8.00,-8.25){};
        \node[NodeST](1)at(1.00,-5.50){\begin{math}\Product\end{math}};
        \node[NodeST](3)at(3.00,-2.75){\begin{math}\Product\end{math}};
        \node[NodeST](5)at(5.00,0.00){\begin{math}\Product\end{math}};
        \node[NodeST](7)at(7.00,-2.75){\begin{math}\Product\end{math}};
        \node[NodeST](9)at(9.00,-5.50){\begin{math}\Product\end{math}};
        \draw[Edge](0)--(1);
        \draw[Edge](1)--(3);
        \draw[Edge](10)--(9);
        \draw[Edge](2)--(1);
        \draw[Edge](3)--(5);
        \draw[Edge](4)--(3);
        \draw[Edge](6)--(7);
        \draw[Edge](7)--(5);
        \draw[Edge](8)--(9);
        \draw[Edge](9)--(7);
        \node(r)at(5.00,2.06){};
        \draw[Edge](r)--(5);
    \end{tikzpicture}
    \enspace \Rew \enspace
    \begin{tikzpicture}[xscale=.2,yscale=.17,Centering]
        \node(0)at(0.00,-5.50){};
        \node(10)at(10.00,-8.25){};
        \node(2)at(2.00,-8.25){};
        \node(4)at(4.00,-8.25){};
        \node(6)at(6.00,-5.50){};
        \node(8)at(8.00,-8.25){};
        \node[NodeST](1)at(1.00,-2.75){\begin{math}\Product\end{math}};
        \node[NodeST](3)at(3.00,-5.50){\begin{math}\Product\end{math}};
        \node[NodeST](5)at(5.00,0.00){\begin{math}\Product\end{math}};
        \node[NodeST](7)at(7.00,-2.75){\begin{math}\Product\end{math}};
        \node[NodeST](9)at(9.00,-5.50){\begin{math}\Product\end{math}};
        \draw[Edge](0)--(1);
        \draw[Edge](1)--(5);
        \draw[Edge](10)--(9);
        \draw[Edge](2)--(3);
        \draw[Edge](3)--(1);
        \draw[Edge](4)--(3);
        \draw[Edge](6)--(7);
        \draw[Edge](7)--(5);
        \draw[Edge](8)--(9);
        \draw[Edge](9)--(7);
        \node(r)at(5.00,2.06){};
        \draw[Edge](r)--(5);
    \end{tikzpicture},
    \end{equation}
\end{minipage}

\begin{minipage}{6cm}
\begin{equation}
    \begin{tikzpicture}[xscale=.17,yscale=.17,Centering]
        \node(0)at(0.00,-10.40){};
        \node(10)at(10.00,-5.20){};
        \node(12)at(12.00,-5.20){};
        \node(2)at(2.00,-10.40){};
        \node(4)at(4.00,-7.80){};
        \node(6)at(6.00,-7.80){};
        \node(8)at(8.00,-7.80){};
        \node[NodeST](1)at(1.00,-7.80){\begin{math}\Product\end{math}};
        \node[NodeST](11)at(11.00,-2.60){\begin{math}\Product\end{math}};
        \node[NodeST](3)at(3.00,-5.20){\begin{math}\Product\end{math}};
        \node[NodeST](5)at(5.00,-2.60){\begin{math}\Product\end{math}};
        \node[NodeST](7)at(7.00,-5.20){\begin{math}\Product\end{math}};
        \node[NodeST](9)at(9.00,0.00){\begin{math}\Product\end{math}};
        \draw[Edge](0)--(1);
        \draw[Edge](1)--(3);
        \draw[Edge](10)--(11);
        \draw[Edge](11)--(9);
        \draw[Edge](12)--(11);
        \draw[Edge](2)--(1);
        \draw[Edge](3)--(5);
        \draw[Edge](4)--(3);
        \draw[Edge](5)--(9);
        \draw[Edge](6)--(7);
        \draw[Edge](7)--(5);
        \draw[Edge](8)--(7);
        \node(r)at(9.00,1.95){};
        \draw[Edge](r)--(9);
    \end{tikzpicture}
    \enspace \Rew \enspace
    \begin{tikzpicture}[xscale=.17,yscale=.17,Centering]
        \node(0)at(0.00,-7.80){};
        \node(10)at(10.00,-5.20){};
        \node(12)at(12.00,-5.20){};
        \node(2)at(2.00,-7.80){};
        \node(4)at(4.00,-7.80){};
        \node(6)at(6.00,-10.40){};
        \node(8)at(8.00,-10.40){};
        \node[NodeST](1)at(1.00,-5.20){\begin{math}\Product\end{math}};
        \node[NodeST](11)at(11.00,-2.60)
            {\begin{math}\Product\end{math}};
        \node[NodeST](3)at(3.00,-2.60){\begin{math}\Product\end{math}};
        \node[NodeST](5)at(5.00,-5.20){\begin{math}\Product\end{math}};
        \node[NodeST](7)at(7.00,-7.80){\begin{math}\Product\end{math}};
        \node[NodeST](9)at(9.00,0.00){\begin{math}\Product\end{math}};
        \draw[Edge](0)--(1);
        \draw[Edge](1)--(3);
        \draw[Edge](10)--(11);
        \draw[Edge](11)--(9);
        \draw[Edge](12)--(11);
        \draw[Edge](2)--(1);
        \draw[Edge](3)--(9);
        \draw[Edge](4)--(5);
        \draw[Edge](5)--(3);
        \draw[Edge](6)--(7);
        \draw[Edge](7)--(5);
        \draw[Edge](8)--(7);
        \node(r)at(9.00,1.95){};
        \draw[Edge](r)--(9);
    \end{tikzpicture},
    \end{equation}
\end{minipage}
\begin{minipage}{8cm}
\begin{equation}
    \begin{tikzpicture}[xscale=.2,yscale=.18,Centering]
        \node(0)at(0.00,-8.67){};
        \node(10)at(10.00,-4.33){};
        \node(12)at(12.00,-4.33){};
        \node(2)at(2.00,-10.83){};
        \node(4)at(4.00,-10.83){};
        \node(6)at(6.00,-6.50){};
        \node(8)at(8.00,-4.33){};
        \node[NodeST](1)at(1.00,-6.50){\begin{math}\Product\end{math}};
        \node[NodeST](11)at(11.00,-2.17){\begin{math}\Product\end{math}};
        \node[NodeST](3)at(3.00,-8.67){\begin{math}\Product\end{math}};
        \node[NodeST](5)at(4.00,-5.33){\begin{math}\Product\end{math}};
        \node[NodeST](7)at(7.00,-2.17){\begin{math}\Product\end{math}};
        \node[NodeST](9)at(9.00,0.00){\begin{math}\Product\end{math}};
        \draw[Edge](0)--(1);
        \draw[Edge](1)--(5);
        \draw[Edge](10)--(11);
        \draw[Edge](11)--(9);
        \draw[Edge](12)--(11);
        \draw[Edge](2)--(3);
        \draw[Edge](3)--(1);
        \draw[Edge](4)--(3);
        \draw[Edge,dotted](5)edge[]node[font=\footnotesize]{
            \begin{math}k\end{math}\hspace*{.5cm}}(7);
        \draw[Edge](6)--(5);
        \draw[Edge](7)--(9);
        \draw[Edge](8)--(7);
        \node(r)at(9.00,1.62){};
        \draw[Edge](r)--(9);
    \end{tikzpicture}
    \enspace \Rew \enspace
    \begin{tikzpicture}[xscale=.19,yscale=.18,Centering]
        \node(0)at(0.00,-7.80){};
        \node(10)at(10.00,-5.20){};
        \node(12)at(12.00,-5.20){};
        \node(2)at(2.00,-7.80){};
        \node(4)at(4.00,-7.80){};
        \node(6)at(6.00,-10.40){};
        \node(8)at(8.00,-10.40){};
        \node[NodeST](1)at(1.00,-5.20){\begin{math}\Product\end{math}};
        \node[NodeST](11)at(11.00,-2.60){\begin{math}\Product\end{math}};
        \node[NodeST](3)at(3.00,-2.60){\begin{math}\Product\end{math}};
        \node[NodeST](5)at(5.00,-5.20){\begin{math}\Product\end{math}};
        \node[NodeST](7)at(7.00,-7.80){\begin{math}\Product\end{math}};
        \node[NodeST](9)at(9.00,0.00){\begin{math}\Product\end{math}};
        \draw[Edge](0)--(1);
        \draw[Edge](1)--(3);
        \draw[Edge](10)--(11);
        \draw[Edge](11)--(9);
        \draw[Edge](12)--(11);
        \draw[Edge](2)--(1);
        \draw[Edge](3)--(9);
        \draw[Edge](4)--(5);
        \draw[Edge](5)--(3);
        \draw[Edge](6)--(7);
        \draw[Edge,dotted](7)edge[]node[font=\footnotesize]{
            \hspace*{.4cm}\begin{math}k\end{math}}(5);
        \draw[Edge](8)--(7);
        \node(r)at(9.00,1.95){};
        \draw[Edge](r)--(9);
    \end{tikzpicture},
    \enspace k \geq 1
\end{equation}
\end{minipage}

\noindent form a convergent presentation of~$\Mag^{\{3, 4\}}$.
We checked that presentation until arity $40$. From this
rewrite relation $\Rew$, we also conjecture that the Hilbert series of
$\Mag^{\{2, 3\}}$ and, by Lemma~\ref{lem:recall_anti_isomorphic}, of
$\Mag^{\{3, 4\}}$ are
\begin{equation}
    \HilbertSeries_{\Mag^{\{2, 3\}}}(t) =
    \HilbertSeries_{\Mag^{\{3, 4\}}}(t) =
    \frac{t}{(1 - t)^3}
    \left(1 - 2t + 2t^2 + t^4 - t^6\right).
\end{equation}
By Taylor expansion, we have the sequence
\begin{equation}
    1, 1, 2, 4, 8, 14, 21, 29, 38, 48
\end{equation}
for the first dimensions of $\Mag^{\{2, 3\}}$ and $\Mag^{\{3, 4\}}$.
For $n \geq 5$,
\begin{equation}
    \#\Mag^{\{2, 3\}}(n) =\#\Mag^{\{3, 4\}}(n) = \frac{n(n+1)}{2} - 7.
\end{equation}
\medbreak

\section*{Conclusion and perspectives}
In this paper, we have considered some quotients of the magmatic operad
in both the linear and the set-theoretic frameworks. We focused mainly
our study on comb associative operads and collected properties by
using computer exploration and rewrite systems on trees. There are many
ways to extend this work. Here follow some few further research
directions.
\medbreak

A first research direction consists in finding convergent presentations
for (all or most of) the operads $\CAs{\gamma}$ in
order to describe algebraic and combinatorial properties of them (as
describing explicit bases, computing Hilbert series, and providing
combinatorial realizations). This has been done only in the case
$\gamma = 3$. For some other cases, we only have conjectural and
experimental data (see
Section~\ref{sec:higher_comb_associative_operads}).
\medbreak

Following ideas existing for word rewriting theory~\cite{GGM15}, a
second axis consists in allowing new generators for the operads
$\CAs{\gamma}$ in order to obtain finite convergent presentations
when $\gamma \geq 4$. Indeed, the Buchberger semi-algorithm works by
adding rewrite rules to a set of rewrite rules to obtain a convergent
rewrite system. An orthogonal procedure consists rather in adding new
generators (new labels for internal nodes in the trees) in order to
obtain convergent rewrite systems. More generally, we also would like to
use these ideas for other magmatic quotients, such as the operad
$\Mag^{\{3, 4\}}$ that we did not study entirely in
Section~\ref{sec:MAg_3}.
\medbreak

A third axis consists in studying if the completion of presentations of
quotients of the magmatic operad maintains links with
the lattice structure introduced in Section~\ref{sec:Magmatic_operads}.
More precisely, assuming that we have completed the presentations of
the quotients $\Oca_1$ and $\Oca_2$ of $\Mag$,
as well as the one of the lower-bound $\Oca_1 \InfQMag \Oca_2$, the
question consists in designing an algorithm for computing a completion
of a presentation of the upper-bound
$\Oca_1 \SupQMag \Oca_2$. Of course, the same question also makes sense
for the lattice of comb associative operads introduced in
Section~\ref{sec:CAs_d}.
\medbreak

Let us address now a perspective fitting more in a combinatorial
context. As mentioned in the introduction of this article, we suspect
that some combinatorial properties of quotients $\Mag/_{\Congr}$ of
$\Mag$ derive from properties of the equivalence relation generating the
operad congruence $\Congr$. More precisely, we would like to investigate
if, when this equivalence relation is a set of Tamari intervals (or is
closed by interval, or satisfies some other classical properties coming
from poset theory), one harvests a nice description of the Hilbert
series and of a combinatorial realization of $\Mag/_{\Congr}$.
\medbreak

A last research axis relies on the study on the $2$-magmatic operad
$\TwoMag$, that is, the free operad generated by two binary elements.
The analog of the associative operad in this context is the operad
$\TwoAs$~\cite{LR06} defined as the quotient of $\TwoMag$ by the
congruence saying that the two generators are associative. This operad
has a nice combinatorial realization in terms of alternating bicolored
Schröder trees. The question consists here in generalizing our main
results for the quotients of $\TwoMag$ and the generalizations of
$\TwoAs$ (that is, the definition of analogs of comb associative operads
and the study of their presentations).
\medbreak

\bibliographystyle{alpha}
\bibliography{Bibliography}

\end{document}